\def\thesection{\arabic{section}}
\def\theequation{\thesection.\arabic{equation}}
\newcommand{\el}{\ell}
\newcommand{\tl}{\tilde}
\newcommand{\ds} {\displaystyle}
\newcommand{\e}{\epsilon}
\newcommand{\vep}{\varepsilon}
\newcommand{\vth}{\vartheta}
\newcommand{\pa} {\partial}
\newcommand{\al} {\alpha}
\newcommand{\ba} {\beta}
\newcommand{\sg} {\sigma}
\newcommand{\de} {\delta}
\newcommand{\ga} {\gamma}
\newcommand{\Ga} {\Gamma}
\newcommand{\Om} {\Omega}
\newcommand{\ra} {\rightarrow}
\newcommand{\ov}{\overline}
\newcommand{\De} {\Delta}
\newcommand{\la} {\lambda}
\newcommand{\noi} {\noindent}
\newcommand{\na} {\nabla}
\newcommand{\mb} {\mathbb}
\newcommand{\mc} {\mathcal}
\def\theequation{\@arabic{\c@section}.\@arabic{\c@equation}}
\def\QED{\hfill {$\square$}\goodbreak \medskip}
\newtheorem{Theorem}{Theorem}[section]
\newtheorem{Lemma}{Lemma}[section]
\newtheorem{Proposition}{Proposition}[section]
\newtheorem{Corollary}[Theorem]{Corollary}
\newtheorem{Remark}{Remark}[section]
\newtheorem{Definition}{Definition}[section]
\def\Xint#1{\mathchoice
	{\XXint\displaystyle\textstyle{#1}}%
	{\XXint\textstyle\scriptstyle{#1}}%
	{\XXint\scriptstyle\scriptscriptstyle{#1}}%
	{\XXint\scriptscriptstyle\scriptscriptstyle{#1}}%
	\!\int}
\def\XXint#1#2#3{{\setbox0=\hbox{$#1{#2#3}{\int}$ }
		\vcenter{\hbox{$#2#3$ }}\kern-.6\wd0}}
\begin{document}
{\vspace{0.01in}
	\title
	{Global regularity results for non-homogeneous growth fractional problems}
	
	\author{  Jacques Giacomoni$^{\,1}$ \footnote{e-mail: {\tt jacques.giacomoni@univ-pau.fr}}, \ Deepak Kumar$^{\,2}$\footnote{e-mail: {\tt deepak.kr0894@gmail.com}},  \
		and \  Konijeti Sreenadh$^{\,2}$\footnote{
			e-mail: {\tt sreenadh@maths.iitd.ac.in}} \\
		\\ $^1\,${\small Universit\'e  de Pau et des Pays de l'Adour, LMAP (UMR E2S-UPPA CNRS 5142) }\\ {\small Bat. IPRA, Avenue de l'Universit\'e F-64013 Pau, France}\\  
		$^2\,${\small Department of Mathematics, Indian Institute of Technology Delhi,}\\
		{\small	Hauz Khaz, New Delhi-110016, India} }

	\date{}
	
	\maketitle

\begin{abstract}
 The main goal of this article is to show the global H\"older regularity of weak solutions to a class of problems involving the fractional $(p,q)$-Laplacian, denoted by $(-\Delta)^{s_1}_{p}+(-\Delta)^{s_2}_{q}$, for $1<p,q<\infty$ and $s_1,s_2\in (0,1)$. 
We use a suitable Caccioppoli inequality and a local boundedness result in order to prove the weak Harnack inequality. Consequently, by employing a suitable iteration process, we establish the interior H\"older continuity result for local weak solutions. The global H\"older regularity result we prove expands and improves the regularity results of Giacomoni, Kumar and Sreenadh (arXiv: 2102.06080)
to the subquadratic case (that is, $q<2$) and to the more general right hand side, which requires a different and new approach. Moreover,  we establish a nonlocal Harnack type inequality for weak solutions and a strong maximum principle for weak super-solutions, which are of independent interest.
 	\medskip
	
	\noi \textbf{Key words:} Fractional $(p,q)$-Laplacian, non-homogeneous nonlocal operator, local boundedness result, weak Harnack inequality,  interior and boundary H\"older continuity, Harnack inequality, strong maximum principle.
	
	\medskip
	
	\noi \textit{2010 Mathematics Subject Classification:} 35J60, 35R11, 35B45, 35D30.

\end{abstract}

\section{Introduction}
 In this paper, we establish the H\"older continuity results for the weak solutions to the following nonlocal problem:
 \begin{align}\label{prob}
 	(-\Delta)^{s_1}_{p}u+  (-\Delta)^{s_2}_{q}u = f \quad \text{in } \; \Om, 
 \end{align}
 \noi where $\Om$ is a bounded domain in $\mb R^N$ with $C^{1,1}$ boundary, $1< q, p<\infty$, $0<s_2\leq s_1<1$ and $f\in L^\ga_{\rm loc}(\Om)$ with $\ga\begin{cases}
 	> N/(ps_1) \quad\mbox{if }N>ps_1,\\
 	\geq 1 \quad\mbox{otherwise}.
 \end{cases}$ \\
The fractional $p$-Laplacian $(-\Delta)^{s}_{p}$ is defined as
	\begin{equation*}
		{(-\Delta)^{s}_pu(x)}= 2\lim_{\vep\to 0}\int_{\mathbb R^N\setminus B_{\vep}(x)} \frac{|u(x)-u(y)|^{p-2}(u(x)-u(y))}{|x-y|^{N+ps}}dy.
	\end{equation*} 
 These kinds of nonlocal operators have their applications in real world problems, such as obstacle problems, the study of American options in finance, game theory, image processing and anomalous diffusion phenomena (see \cite{nezzaH} for more details). Due to this reason, elliptic problems involving the fractional Laplacian have been extensively studied in the last two decades.\par
 The leading operator, $(-\Delta)^{s_1}_{p}+(-\Delta)^{s_2}_{q}$, in problem \eqref{prob} is known as the fractional $(p,q)$-Laplacian, which is non-homogeneous {(unless $p=q$)} in the sense that for any $t>0$, there does not exist any $\sg\in\mb R$ such that $((-\Delta)^{s_1}_{p}+(-\Delta)^{s_2}_{q})(tu)=t^\sg ((-\Delta)^{s_1}_{p}u+(-\Delta)^{s_2}_{q}u)$ holds for all $u\in W^{s_1,p}(\Om)\cap W^{s_2,q}(\Om)$.
 The operator is the fractional analogue of the $(p,q)$-Laplacian ($-\De_p -\De_q$), which arises from the study of general reaction-diffusion equations with non-homogeneous diffusion and transport aspects. The problems involving these kinds of operators have applications in biophysics, plasma physics and chemical reactions, with double phase features, where the function $u$ corresponds to the concentration term, and the differential operator represents the diffusion coefficient,  for details, see \cite{marano} and  references therein.\par 
 Regarding the regularity results for weak solutions to quasilinear elliptic equations, Lieberman in \cite{lieberm} proved $C^{1,\al}(\ov\Om)$ regularity results for problems containing more general operators than the classical $p$-Laplacian. Subsequently, in \cite{liebm91}, the same author established the interior H\"older continuity of the gradient of weak solutions to problems where the leading operator has Orlicz type of growth condition. See \cite{maly} for more details on the regularity theory of quasilinear problems.  
  \par 
 Concerning the regularity results for problems involving nonlocal operators, the case of the fractional Laplacian is well understood. In particular, in \cite{caffsilv} Caffarelli and Silvestre obtained the interior regularity results, while Ros-Oton and Serra \cite{ros} proved the optimal boundary regularity. Precisely, in the latter work, the authors proved that weak solutions of the fractional Laplacian problem (i.e., $p=q=2$ and $s_1=s_2$) with a bounded right hand side and homogeneous Dirichlet boundary conditions, are in $ C^{0,s}(\mb R^N)$.  One can rely on \cite{dyda} for regularity results of linear fractional problems involving a more general kernel. For the nonlinear and homogeneous operator case ($p=q\neq 2$ and $s_1=s_2$), in \cite{dicastro}, Di Castro et al. established interior H\"older regularity results for nonlocal equations whose prototypes include the fractional $p$-Laplacian. Here, they  established the Caccioppoli inequality and the Logarithmic lemma, and following De Giorgi's iteration technique, proved the local H\"older continuity result.  In \cite{korvenpaa}, Korvenp\"a\"a et al. further obtained the interior and boundary H\"older regularity results for obstacle problems (also covering the Dirichlet problem) with $zero$ right hand side.
 We also mention the work of M. Cozzi \cite{cozzi}, for H\"older regularity results for minimizers of functional involving the fractional $p$-Laplacian energy in the fractional De Giorgi's class.
  Using a slightly different approach, Iannizzotto et al. in \cite{iann}, proved that the weak solutions of problem \eqref{prob} with bounded right hand side, belong to the space $C^{0,\al}(\ov\Om)$, for some $\al\in (0,s_1]$. By constructing the barrier functions as a solution to some nonlocal equation on $B_1$ (fractional torsion problem) and scaling it appropriately to get a sub-solution, they established a weak Harnack type inequality to prove the interior regularity results. Moreover, using the barrier arguments, in the spirit of Krylov's approach (as in \cite{ros}), they obtained the boundary behavior of the weak solutions. 
 Subsequently, in \cite{brascoH}, Brasco et al. established the optimal H\"older interior regularity result and proved that the local weak solution $u\in W^{s,p}_{\rm loc}(\Om)$ of problem \eqref{prob} for the superquadratic and homogeneous case (i.e., $2\leq p=q$ and $s_1=s_2=s$)  with $f\in L^r_{\rm loc}(\Om)$, for suitable $r>0$, is in $C^{0,\al}_{\rm loc}(\Om)$, for all $\al<\min\{1,ps/(p-1)\}$. 
 Consequently, with the help of the boundary behavior of the solution from \cite{iann}, we get the $C^{s}$ regularity result up to the boundary. We additionally refer to \cite{kuusi}, for regularity results for nonlocal problems involving measure data. In \cite{ianndist}, Iannizzotto et al. extended the results of \cite{ros} to the nonlinear setting and proved that the weak solution $u$ of problem \eqref{prob}, again for the case $2\leq p=q$, $s_1=s_2=s\in (0,1)$ and bounded right hand side, satisfies $\frac{u}{d^s} \in C^{0,\al}(\ov\Om)$, for some $\al\in (0,1)$, where $d(x):={\rm dist}(x,\pa\Om)$.  \par 
 Due to the non-homogeneous and nonlinear nature of the fractional $(p,q)$-operators, the corresponding nonlocal problems have attracted the attention of many researchers in recent years. For instance, see {\cite{alves,ambrosio1,ambrosio,ambrosioT1,ambrosioT2,bhakta,DDS}} for the existence and multiplicity results for problems involving the fractional $(p,q)$-Laplacian, and we refer \cite{mingioneJMAA} for a survey of recent developments on non-standard growth problems. On the other hand, there is not much literature available regarding the regularity results. Particularly, in \cite{DDS}, Goel et al. have  obtained the $C^{0,\al}_{\rm loc}$ regularity result, with some unspecified $\al\in (0,1)$, for weak bounded solutions in the superquadratic case.  Further, we mention the recent work of \cite{bonder}, where the global H\"older continuity results (in the spirit of \cite{iann}) for weak solutions to problems involving the fractional $(-\De)_g^s$-Laplacian, where $g$ is a convex Young's function, is proved. However, this does not include our problem, even for the case $s_1=s_2$, because of the power type of growth conditions (2.2) and (2.4), there.  In \cite{filippis}, the authors have obtained the interior H\"older regularity of viscosity solutions to a class of fractional double phase problems. Recently, in \cite{JDS2}, the authors of the present work have obtained almost optimal global H\"older continuity results for weak solutions to fractional $(p,q)$-problems in the superquadratic case ($p\geq q\geq 2$). To establish the optimal interior regularity result for local weak solutions (without the boundedness assumption), the authors proved a local boundedness result, which uses a new Caccioppoli type inequality for non-homogeneous operators. Then, employing Moser's iteration technique on the discrete differential of the solution and exploiting the local boundedness of the function $f$, they proved  suitable Besov space inclusion (which in turn gives H\"older continuity). However, the approach fails when either of the exponent is less than $2$. By establishing the control over the barrier functions involving the distance function, the authors further obtained almost optimal $d^{s_1}$ boundary behavior of the weak solution. This coupled with the interior H\"older regularity result proves the almost optimal $s_1$-H\"older continuity result globally in $\mb R^N$. 
 \par 
 Concerning the Harnack type inequality, in \cite{trudinger}, Trudinger established Harnack and weak Harnack type estimates for weak solutions to  general second order quasilinear problems. As an application to these, the author obtains H\"older continuity for weak solutions. 
 However, in the nonlocal case, the classical Harnack inequality fails, see for instance \cite{Kassmann2}. Subsequently,
  in \cite{Kassmann}, the same author proved the Harnack inequality (where the nonnegativity in the whole $\mb R^N$ is not assumed) with nonlocal tails for problems involving the fractional Laplacian. For general $p$, we refer to \cite{dicastroHarn} for Harnack and weak Harnack type inequality for minimizers as well as weak solutions. In this paper, the authors have used suitable Caccioppoli type estimates and a local boundedness result to achieve these aims.\par
 Inspired from the above discussion, in this work, we answer the open question of H\"older regularity results for weak solutions to fractional $(p,q)$-problems, for the case $1<q< 2$ and $1<p<\infty$. We prove the interior regularity result for local weak solutions, as in Theorem \ref{intreg}, by establishing a weak Harnack type inequality (see Proposition \ref{weakharn}) and then we complete the proof of H\"older continuity in the spirit of \cite[Theorem 2.10]{DDS}. As noted earlier, for the homogeneous fractional $p$-Laplacian case with $p<2$, to prove the weak Harnack inequality, in \cite{iann} a suitably scaled version of the solution to the fractional torsion problem is used for appropriate sub-solutions. Nevertheless, due to the lack of the scaling property, we can not rely on this method for our case. Moreover, we observe that by establishing the Logarithmic lemma (in accordance with \cite[Lemma 1.3]{dicastro} and Lemma \ref{lem1}, below), we can not prove the H\"older continuity result by applying De Giorgi's iteration technique as in the proof of \cite[Theorem 1.2]{dicastro}. This is due to the presence of non-homogeneous power of the parameter $\la$ (taken in the place of $d$, there), which prevents from choosing the suitable constant $k$ involved in the proof of \cite[Lemma 5.1]{dicastro}. So, we follow the idea of \cite{dicastroHarn} to prove our weak Harnack inequality. However, because of the non-homogeneous nature of the operator, we obtain two nonlocal tails with different behaviors, which require a technical care at several places. We overcome the difficulty raised by these nonlocal tails by further improving the parameters involved in the proofs (by adding an additional quantity of the form $r^{s_1p-s_2q}$). Our interior regularity result complements that of \cite{dicastro} to the case of non-homogeneous fractional $(p,q)$-problems with non-homogeneous right hand side in $L^\ga_{\rm loc}(\Om)$, whereas in \cite{JDS2} in addition to $q\geq 2$, $f\in L^\infty_{\rm loc}(\Om)$ is assumed. We strongly believe that this approach, while currently focused on a specific form of the operator, can be further applied for a wider class of nonlocal and non-homogeneous operators, see Remarks \ref{remgenker} and \ref{remmixed} in this regard. 
 It is also worth noting that we do not assume any boundedness condition on the solution in our interior regularity result, thanks to Proposition \ref{localbdd}, which extends that of \cite[Proposition 3.2]{JDS2}. The proof of Proposition \ref{localbdd} relies on a Caccioppoli type inequality for non-homogeneous operators (see \cite[Lemma 3.1]{JDS2}) and De Giorgi's type iteration argument. The main obstacle in the proof is due to $f\in L^\ga_{\rm loc}(\Om)$, for $\ga<\infty$, as we can not use the Moser type iteration argument available for the homogeneous case in \cite{brasco2}. In this paper, clever convexity arguments proved that $u_+$ is a sub-solution with right hand side $|f|$, whenever $u$ is a solution to \eqref{prob} (with $p=q$ and $s_1=s_2$), but seem to be inefficient in presence of non-homogeneous operators. Subsequently, using the boundary behavior of \cite[Proposition 3.11]{JDS2}, we obtain the global H\"older continuity result for weak solutions. Additionally, employing the approach of \cite[Theorem 1.1]{dicastroHarn} to our case, we obtain the Harnack inequality, as in Theorem \ref{harnack}. Similar to the homogeneous case, our Harnack inequality is valid for sign changing solutions also and the sign changing behavior is displayed in terms of the nonlocal tails. As an application to our global H\"older regularity, we obtain a strong maximum principle for fractional $(p,q)$-problems, valid for all $p\geq q>1$. We observe that the proof of \cite[Theorem 2.6]{JDS2} can not be generalized to this case, as our weak Harnack type inequality involves an additional constant term ($r^\frac{ps_1-qs_2}{p-q}$). So, we follow the approach of \cite{delpezzo} and prove that continuous weak super-solutions are viscosity super-solutions of fractional $(p,q)$-problem with right hand side $zero$. Subsequently, we apply a suitable barrier function and the weak comparison principle to extend this result for problems involving more general nonlinearities (see Theorem \ref{strngmax}).
 
  \section{Function Spaces and Main Results}
  We first fix some notations which will be used through out the paper.
  We set  $t_\pm=\max\{\pm t,0\}$.
  We denote $[t]^{p-1}:=|t|^{p-2}t$, for all $p>1$ and $t\in\mb R$.
  Next, for $x_0\in\mb R^N$ and $v\in L^1(B_r(x_0))$, we set \[(v)_{B_r(x_0)}:=\Xint-_{B_r(x_0)}v(x)dx=\frac{1}{|B_r(x_0)|}\int_{B_r(x_0)}v(x)dx.\]
  The order pair $(\el,s)$ should always be considered as $(\el,s)\in \{(p,s_1), (q,s_2)\}$, unless otherwise mentioned.
   The constants $c$ and $C$ may vary line to line.\par
 For any $E\subset\mathbb{R}^N$, $1\leq p<\infty$ and $0<s<1$, the fractional Sobolev space $W^{s,p}(E)$ is defined as 
 \begin{align*}
 	W^{s,p}(E):= \left\lbrace u \in L^p(E): [u]_{W^{s,p}(E)}  < \infty \right\rbrace
 \end{align*}
 \noi endowed with the norm $\|u\|_{W^{s,p}(E)}:=  \|u\|_{L^p(E)}+ [u]_{W^{s,p}(E)}$,
 where \begin{align*}
 	[u]_{W^{s,p}(E)}:=  \left(\int_{E}\int_{E} \frac{|u(x)-u(y)|^p}{|x-y|^{N+sp}}~dxdy  \right)^{1/p}.
 \end{align*} 
 Next, for $1<p<\infty$ and for any (proper) open and bounded subset $E$ of $\mb R^N$, we have 
 \begin{align*}
 	W^{s,p}_0(E):=\{ u\in W^{s,p}(\mb R^N) \ : \ u=0 \quad\mbox{a.e. in }\mb R^N\setminus E \}
 \end{align*}
 which is a uniformly convex Banach space when equipped with the norm  $[\cdot]_{W^{s,p}(\mb R^N)}$ (equivalent to $\| \cdot \|_{W^{s,p}(\mb R^N)}$, and we will denote it by $\|\cdot \|_{W^{s,p}_0(E)}$). 
 When the domain $E$ has Lipschitz boundary, $W^{s,p}_0(E)$ coincides with $X_{p,s}$ (as defined in \cite{DDS}), and the following inclusion holds: 
 \begin{Lemma}\cite[Lemma 2.1]{DDS}
 	Let $E\subset\mb R^N$ be a bounded domain with Lipschitz boundary. Let $1<q\leq p<\infty$ and $0<s_2<s_1 <1$, then there exists a positive constant  $C=C(|E|,\;N,\; p,\;q,\;s_1,\;s_2)$ such that 
 	\begin{align*}
 		\|u\|_{W^{s_2,q}_0(E)}\leq C \|u\|_{W^{s_1,p}_0(E)}, \quad \text{for all } \; u \in W^{s_1,p}_0(E).
 	\end{align*}
 \end{Lemma}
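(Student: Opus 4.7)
Since the norm on $W^{s,p}_0(E)$ is equivalent to the Gagliardo seminorm $[\cdot]_{W^{s,p}(\mb R^N)}$, my plan is to establish the two inequalities
\[
\|u\|_{L^q(E)} \leq C\,[u]_{W^{s_1,p}(\mb R^N)} \quad\text{and}\quad [u]_{W^{s_2,q}(\mb R^N)} \leq C\,[u]_{W^{s_1,p}(\mb R^N)}.
\]
The first would follow from the standard fractional Poincar\'e--Sobolev inequality on the bounded set $E$, giving $\|u\|_{L^p(E)} \leq C\,[u]_{W^{s_1,p}(\mb R^N)}$, followed by H\"older $\|u\|_{L^q(E)} \leq |E|^{(p-q)/(pq)}\|u\|_{L^p(E)}$. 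For the seminorm inequality I would split the double integral defining $[u]_{W^{s_2,q}(\mb R^N)}^q$ into a tail region $\{|x-y|\geq 1\}$ and a near-diagonal region $\{|x-y|<1\}$.

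The tail contribution is easy: using $|u(x)-u(y)|^q \leq 2^{q-1}(|u(x)|^q+|u(y)|^q)$ together with the fact that $\int_{|x-y|\geq 1}|x-y|^{-N-s_2 q}\,dy$ is a finite constant independent of $x$, the tail is at most a multiple of $\|u\|_{L^q(\mb R^N)}^q = \|u\|_{L^q(E)}^q$, already controlled by the first inequality.

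The real content is the near-diagonal piece. When $q=p$ it is immediate, since on $\{|x-y|<1\}$ the hypothesis $s_1>s_2$ gives $|x-y|^{-N-s_2 p} \leq |x-y|^{-N-s_1 p}$, so the integrand is pointwise dominated by the $(s_1,p)$-Gagliardo integrand. When $q<p$ I would write
\[
\frac{|u(x)-u(y)|^q}{|x-y|^{N+s_2 q}} = \frac{|u(x)-u(y)|^q}{|x-y|^{(N+s_1 p)q/p}}\cdot|x-y|^{(N+s_1 p)q/p-N-s_2 q}
\]
and apply H\"older with conjugate exponents $p/q$ and $p/(p-q)$. The first factor produces the $(s_1,p)$-Gagliardo seminorm to the $p$-th power; the second produces an auxiliary integral with integrand $|x-y|^{\al}$ where $\al = -N + pq(s_1-s_2)/(p-q)$. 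To make that auxiliary integral finite, I would exploit that $|u(x)-u(y)|^q$ vanishes on $\{|x-y|<1\}$ unless both $x,y$ lie in the $1$-neighbourhood $\tl E := \{z : \mathrm{dist}(z,E)\leq 1\}$ (because $u\equiv 0$ outside $E$); the auxiliary integral then reduces to $\int_{\tl E}\int_{\tl E\cap B_1(x)}|x-y|^{\al}\,dy\,dx \leq C(\al,N)\,|\tl E|$.

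The main obstacle, and where both hypotheses are genuinely used, is precisely the convergence of this auxiliary integral: it is finite only because $\al+N = pq(s_1-s_2)/(p-q)>0$ (which needs $s_1>s_2$) and because $|\tl E|<\infty$ (which needs $E$ bounded). Combining the two regional estimates and absorbing the $L^q$ term via Poincar\'e--Sobolev then yields the claimed inequality with a constant depending only on $|E|$, $N$, $p$, $q$, $s_1$, $s_2$.
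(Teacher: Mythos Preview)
The paper does not actually prove this lemma; it is simply cited from \cite{DDS}. Immediately after the statement, the paper remarks that the analogous inequality for the \emph{local} norms $\|\cdot\|_{W^{s,p}(E)}$ (integration only over $E\times E$) is ``an easy consequence of the H\"older inequality''---but no argument is given for the $W^{s,p}_0$-version, where the seminorm is taken over $\mb R^N\times\mb R^N$.

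Your proposal is correct and supplies exactly what is missing. The core device---writing the $(s_2,q)$-integrand as the $q/p$-power of the $(s_1,p)$-integrand times a pure power of $|x-y|$, then applying H\"older with exponents $p/q$ and $p/(p-q)$---is precisely the mechanism behind the paper's one-line hint for the local case. The extra work you do (the split $\{|x-y|\ge 1\}$ versus $\{|x-y|<1\}$, handling the tail via $L^q$ and Poincar\'e) is what is genuinely needed to pass from the local $E\times E$ integral to the full $\mb R^N\times\mb R^N$ seminorm, and you handle it cleanly.

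One small refinement: as written, your constant depends on $|\tl E|$ (the $1$-neighbourhood of $E$), which in general cannot be bounded by $|E|$ alone. You can recover the stated dependence $C=C(|E|,\dots)$ by exploiting symmetry more sharply in the near-diagonal piece: since the integrand vanishes unless $x\in E$ or $y\in E$, one has
\[
\int\!\!\int_{\{|x-y|<1\}}\frac{|u(x)-u(y)|^q}{|x-y|^{N+s_2q}}\,dx\,dy \le 2\int_E\int_{B_1(x)}\frac{|u(x)-u(y)|^q}{|x-y|^{N+s_2q}}\,dy\,dx,
\]
and then your H\"older step produces the auxiliary integral $\int_E\int_{B_1(x)}|x-y|^{\al}\,dy\,dx = C(N,\al)\,|E|$, with $\al=-N+pq(s_1-s_2)/(p-q)>-N$ exactly as you computed. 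This yields the constant in the form asserted.
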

 For any bounded open set $E\subset\mb R^N$ and $0<s_2<s_1<1$, an easy consequence of the H\"older inequality yields 
 \[ \|u\|_{W^{s_2,q}(E)}\leq C \|u\|_{W^{s_1,p}(E)}, \quad \text{for all } \; u \in W^{s_1,p}(E). \]
 In what follows, we will focus only on the case $s_1\neq s_2$ and work with the space $W^{s_1,p}$. The case $s_1=s_2=s$, can be treated similarly by considering the space $\mc W:= W^{s,p}(E)\cap W^{s,q}(E)$ (in place of $W^{s_1,p}(E)$),
 equipped with the norm $\|\cdot\|_{W^{s,p}(E)}+\|\cdot\|_{W^{s,q}(E)}$. 
 \begin{Definition}
 	Let $u:\mb R^N\to \mb R$ be a measurable function, $0< m<\infty$ and $\theta>0$. We define the tail space as below:
 	\begin{align*}
 		L^{m}_{\theta}(\mb R^N) = \bigg\{ u\in L^{m}_{\rm loc}(\mb R^N) : \int_{\mb R^N} \frac{|u(x)|^{m}dx}{(1+|x|)^{N+\theta}} <\infty \bigg\}.
 	\end{align*} 
 	The nonlocal tail of radius $R$ and center $x_0\in\mb R^N$ is defined as 
 	\begin{align*}
 		T_{m,\theta}(u;x_0,R)=\left(R^{\theta}\int_{B_R(x_0)^c} \frac{|u(y)|^{m}}{|x_0-y|^{N+\theta}} dy\right)^{1/m}.
 	\end{align*}
For brevity,  we set $T_{p-1}(u;x,R):=T_{p-1,s_1p}(u;x,R)$ and  $T_{q-1}(u;x,R):=T_{q-1,s_2q}(u;x,R)$.
\end{Definition}
 Now, we define the notion of a local weak solution to problem \eqref{prob}.  
\begin{Definition}
	A function $u\in W^{s_1,p}_{\rm loc}(\Om)\cap L^{p-1}_{s_1p}(\mb R^N) \cap L^{q-1}_{s_2q}(\mb R^N)$ is said to be a local weak solution of problem \eqref{prob} if 
	\begin{align*}
	 &\int_{\mb R^N}\int_{\mb R^N}\frac{[u(x)-u(y)]^{p-1}}{|x-y|^{N+ps_1}}(\psi(x)-\psi(y))dxdy + \int_{\mb R^N}\int_{\mb R^N}\frac{[u(x)-u(y)]^{q-1}}{|x-y|^{N+qs_2}}(\psi(x)-\psi(y))dxdy \nonumber\\
	 &\quad= \int_{\Om} f\psi dx,
	\end{align*}
	for all $\psi\in W^{s_1,p}(\Om)$ compactly supported in $\Om$. 
\end{Definition}
Let $\Om\Subset\Om'\subset\mb R^N$. For   $g\in W^{s_1,p}(\Om')\cap L^{p-1}_{s_1p}(\mb R^N)\cap L^{q-1}_{s_2q}(\mb R^N)$, we consider the following prototype problem:
 \begin{equation*}
 	\left\{ \begin{array}{rlll}
 		(-\Delta)^{s_1}_{p}u+  (-\Delta)^{s_2}_{q}u &= f \quad \text{in} \; \Om, \\
 		u &=g \quad \text{in} \; \mb R^N\setminus \Om.
 	\end{array}
 	\right. \tag{$\mc G_{f,g}(\Om)$} \label{probMain}
 \end{equation*}
 To study the weak solutions of \eqref{probMain}, we state the following:
 \begin{Definition}
 	Let $\Om\Subset\Om'\subset\mb R^N$ and $1<p<\infty$ with $s_1\in (0,1)$, then we define
 	\begin{align*}
 		X^{s_1,p}_g(\Om,\Om'):= \{ v\in W^{s_1,p}(\Om')\cap L^{p-1}_{s_1p}(\mb R^N) : v=g \quad\mbox{a.e. in }\mb R^N\setminus\Om \},
 	\end{align*}
 	equipped with the norm of $W^{s_1,p}(\Om')$.
 \end{Definition}
 
 \begin{Definition}
  A function $u\in X^{s_1,p}_{g}(\Om,\Om') \cap X^{s_2,q}_{g}(\Om,\Om')$ is said to be a weak super-solution (resp. sub-solution) of \eqref{probMain} if $(g-u)_+\in X^{s_1,p}_{0}(\Om,\Om') \cap X^{s_2,q}_{0}(\Om,\Om')$ (resp. $(u-g)_+\in X^{s_1,p}_{0}(\Om,\Om') \cap X^{s_2,q}_{0}(\Om,\Om')$) and the following holds:
  \begin{align}\label{solndef}
  &\int_{\mb R^N}\int_{\mb R^N}\frac{[u(x)-u(y)]^{p-1}}{|x-y|^{N+ps_1}}(\phi(x)-\phi(y))dxdy + \int_{\mb R^N}\int_{\mb R^N}\frac{[u(x)-u(y)]^{q-1}}{|x-y|^{N+qs_2}}(\phi(x)-\phi(y))dxdy \nonumber\\
  &\quad\geq ({\rm resp. }\leq) \int_{\Om} f(x)\phi(x) dx,
  \end{align}
  for all non-negative $\phi\in X^{s_1,p}_{0}(\Om,\Om') \cap X^{s_2,q}_{0}(\Om,\Om')$. 
 \end{Definition}
Our first main theorem is the local H\"older continuity result for local weak solutions to problem \eqref{prob}. Precisely, we have: 
\begin{Theorem}\label{intreg}
 Suppose $1<q<p<\infty$. Let $u\in W^{s_1,p}_{\rm loc}(\Om)\cap L^{p-1}_{s_1p}(\mb R^N)\cap L^{q-1}_{s_2q}(\mb R^N)$ be a local weak solution to problem \eqref{prob}. Then, $u\in C^{0,\al}_{\rm loc}(\Om)$, for some $\al\in (0,1)$ satisfying $\al<\min \big\{ \frac{\ga s_1p-N}{\ga(p-1)}, \frac{qs_2}{q-1}, \frac{ps_1-qs_2}{p-q} \big\}$. Moreover, for any $R_0\in (0,1)$ such that $B_{2R_0}\equiv B_{2R_0}(x_0)\Subset\Om$, the following holds: 
	\begin{align*}
		[u]_{C^{0,\al}(B_r)} \leq  {\frac{c}{R_0^\al}} \Big[& \| u \|_{L^\infty(B_{R_0})}+R_0^\frac{ps_1-qs_2}{p-q}+  R_0^\frac{\ga s_1p-N}{\ga(p-1)} \| f \|_{L^\ga(B_{R_0})}^\frac{1}{p-1}+ T_{p-1}(u;x_0,R_0) \\
		&+ T_{q-1}(u;x_0,R_0)\Big],
	\end{align*}
	for all $r\in (0, R_0)$, where $c=c(N,s_1,p,s_2,q)>0$ is a constant.
\end{Theorem}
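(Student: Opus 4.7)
The plan is to follow the strategy of \cite[Theorem 2.10]{DDS}: combine the local boundedness of Proposition \ref{localbdd} with the weak Harnack inequality of Proposition \ref{weakharn} to derive a one-step decay of oscillation on shrinking concentric balls, and then iterate. First I would fix $B_{2R_0}(x_0) \Subset \Om$ and invoke Proposition \ref{localbdd} to reduce to the case $u \in L^\infty(B_{R_0})$ with a quantitative bound. Set $r_j := \theta^j R_0$ for a small $\theta \in (0,1)$ to be fixed later, $M_j := \sup_{B_{r_j}} u$, $m_j := \inf_{B_{r_j}} u$, and collect all scale-independent data in
\[
A := \|u\|_{L^\infty(B_{R_0})} + R_0^{\frac{ps_1-qs_2}{p-q}} + R_0^{\frac{\ga s_1 p - N}{\ga(p-1)}}\|f\|_{L^\ga(B_{R_0})}^{\frac{1}{p-1}} + T_{p-1}(u;x_0,R_0) + T_{q-1}(u;x_0,R_0).
\]
The target of the iteration is $\omega_j := \max\{M_j - m_j,\ A\,(r_j/R_0)^\al\} \le C A\, \theta^{j\al}$; summing this over $j$ and interpolating to non-dyadic radii produces the Hölder seminorm bound in the statement.

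For the one-step decay I would consider $v_1 := u - m_j$ and $v_2 := M_j - u$, both nonnegative in $B_{r_j}$ and, by translation invariance and oddness of $[\,\cdot\,]^{p-1}$, weak (super)solutions of an equation of type \eqref{prob} with right-hand side $\pm f$. Since
\[
|\{v_1 \ge \omega_j/2\} \cap B_{r_{j+1}}| + |\{v_2 \ge \omega_j/2\} \cap B_{r_{j+1}}| \ge |B_{r_{j+1}}|,
\]
one of them, say $v$, satisfies a lower measure-density condition at level $\omega_j/2$. Applying Proposition \ref{weakharn} to $v$ on $B_{r_j/2}$ and combining with this density yields, for some $\delta > 0$,
\[
\inf_{B_{r_{j+1}}} v\ \ge\ \delta\, \omega_j\ -\ C\Bigl( T_{p-1}(v_-;x_0,r_j) + T_{q-1}(v_-;x_0,r_j) + r_j^{\frac{\ga s_1 p - N}{\ga(p-1)}}\|f\|_{L^\ga(B_{r_j})}^{\frac{1}{p-1}} + r_j^{\frac{ps_1-qs_2}{p-q}}\Bigr).
\]
Splitting the tails of $v_-$ into the tails of $u$ at the mother scale $R_0$ plus a telescoping sum over previous oscillation jumps and then rearranging gives $\omega_{j+1} \le (1-\delta)\omega_j + CA(r_{j+1}/R_0)^\al$, as desired.

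The main obstacle will be balancing the two nonlocal tails $T_{p-1}$ and $T_{q-1}$, which rescale with distinct exponents $s_1 p/(p-1)$ and $s_2 q/(q-1)$, together with the nonhomogeneous term $r^{(ps_1-qs_2)/(p-q)}$ and the data term $r^{(\ga s_1 p - N)/(\ga(p-1))}\|f\|_{L^\ga}^{1/(p-1)}$, all inside a single telescoping sum. The absence of a single scaling law (unlike the homogeneous fractional $p$-Laplacian setting of \cite{dicastroHarn}) forces each of the three ceilings on $\al$ in the statement: each must sit strictly below the corresponding homogeneity exponent so that the additive correction, after rescaling to unit size, can be absorbed as $\sigma\,\omega_j$ with a uniform $\sigma \in (0,1)$. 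Once $\al$ is chosen below $\min\{\tfrac{\ga s_1 p - N}{\ga(p-1)}, \tfrac{qs_2}{q-1}, \tfrac{ps_1-qs_2}{p-q}\}$ and $\theta$ is then fixed small enough depending on $\delta$ and $\al$, the iteration closes and yields $\omega_j \le C A\, \theta^{j\al}$; a standard passage from dyadic to arbitrary $r \in (0,R_0)$ finishes the proof of the seminorm estimate.
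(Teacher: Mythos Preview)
Your proposal is correct and follows essentially the same strategy as the paper: local boundedness from Proposition~\ref{localbdd}, then an oscillation-decay iteration driven by the weak Harnack inequality, with the nonlocal tails of $(u-m_j)_-$ and $(M_j-u)_-$ split into a telescoping sum over earlier annuli plus the mother-scale tail, and $\al$ taken below the three exponents so every correction can be absorbed.

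The differences are purely organizational. The paper fixes the ratio $1/4$, works with constructed sequences $m_j\le\inf_{B_j}u\le\sup_{B_j}u\le M_j$ satisfying $M_j-m_j=\omega R_j^\al$ exactly, and applies Corollary~\ref{corweakharn} to \emph{both} $M_j-u$ and $u-m_j$, summing to obtain $\mathrm{osc}_{B_{j+1}}u\le(1-\varsigma)\omega R_j^\al+\text{corrections}$ directly (no density alternative is needed). The annular tail sum is then controlled by $\omega R_j^\al S_\el(\al)^{1/(\el-1)}$ with $S_\el(\al)\to0$ as $\al\to0^+$, which is the paper's mechanism for absorbing the tail into the leading term. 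Your use of the actual sup/inf, the $\omega_j:=\max\{M_j-m_j,\,A(r_j/R_0)^\al\}$ device, the measure-density alternative to select one of $v_1,v_2$, and a free parameter $\theta$ are all standard variants leading to the same recursion; in fact your alternative avoids the separate treatment of the case $\tfrac{N(p-1)}{N-s_1p}\le 1$ that the paper handles at the end.
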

It is clear from the statement of Theorem \ref{intreg} that the H\"older exponent $\al$ is not optimal. However, for the case $q\geq 2$ and $f\in L^\infty_{\rm loc}(\Om)$, optimal value of $\al$ is mentioned in \cite{JDS2}, whereas the optimality of $\al$ in the subquadratic case is still unknown, even for the homogeneous operator case, that is, $p=q$ and $s_1=s_2$.
\begin{Remark}
\begin{itemize}
\item[(a)] For the case $q=p$ and $s_2< s_1$, the term  $R_0^\frac{ps_1-qs_2}{p-q}$ does not appear in Theorem \ref{intreg}, and the proof runs analogously. Indeed, in all of the technical results of Section 3, the term $\la^{q-p}r^{s_1p-s_2q}$ disappears because of the $p$-homogeneity.
\item[(b)] For the case $1<p<q<\infty$ and $s_1p\leq s_2 q$, we can proceed analogously by interchanging the role of $(p,s_1)$ with $(q,s_2)$, to get a similar result as in Theorem \ref{intreg}. However, for the case $s_1p>s_2 q$, we need to assume the boundedness of the weak solution in whole $\mb R^N$. Then, employing the Caccioppoli type inequality \cite[Lemma 3.1]{JDS2} and an analogous inequality of \eqref{eq11}, we can proceed as in \cite[Theorem 1.2]{dicastro} to get our interior H\"older continuity result.
\end{itemize}
 \end{Remark}

 \begin{Remark}\label{remgenker}
	As a matter of fact, the result of Theorem \ref{intreg} is valid for equations of the type \eqref{prob} involving a more general class of operators, for instance, 
	\begin{equation*}
		\mc L_{K_{\el,s}} u(x)= 2\ds\lim_{\e\ra 0}\int_{\mb R^N\setminus B_\e(x)} [u(x)-u(y)]^{\el-1}K_{\el,s}(x,y)dy,
	\end{equation*}
	where $(\el,s)\in\{(p,s_1),(q,s_2)\}$ with $1<q<p<\infty$ and $0<s_2\le s_1<1$. Here, the singular kernel $K_{\el,s}: \mb R^N\times\mb R^N\to[0,\infty)$ is such that 
	\begin{enumerate}
		\item [(i)] there exist $1\le c_p\leq C_p$ satisfying $c_p \leq K_{p,s_1}(x,y) |x-y|^{N+ps_1} \le C_p $, for a.a. $x,y\in \mb R^N$,
		\item[(ii)] there exist $0\leq c_q\leq C_q$ satisfying $ c_q  \leq K_{q,s_2}(x,y)|x-y|^{N+qs_2} \leq C_q $,  for a.a. $x,y\in \mb R^N$.
	\end{enumerate} 
	For example, one can take $K_{\el,s}(x,y)= a_\el(x,y) |x-y|^{-(N+\el s)}$, where $a_\el:\mb R^N\times\mb R^N\to\mb R$ are non-negative bounded functions, for $\el\in\{p,q\}$, with $\inf_{\mb R^{N}\times\mb R^N}a_p\geq 1$.
\end{Remark}
Next, we establish the following boundary regularity result.
 \begin{Corollary}[Boundary regularity]\label{bdryreg}
	Let $u\in  W^{s_1,p}_0(\Om)$ be a weak solution to \eqref{probMain} with $f\in L^\infty(\Om)$ and $g\equiv 0$. Then, there exists $\al\in (0,1)$ such that $u\in C^{0,\al}(\ov\Om)$. Moreover, 
	\begin{align}\label{holderbd}
		\| u \|_{C^{0,\al}(\ov\Om)} \leq C,
	\end{align}
	where $C=C(\Om,N,s_1,p,s_2,q, \|f\|_{L^\infty(\Om)})>0$ is a constant (which  depends as a non-decreasing function of $\|f\|_{L^\infty(\Om)}$).
\end{Corollary}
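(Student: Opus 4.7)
The plan is to combine the interior H\"older estimate of Theorem \ref{intreg} with the sharp boundary decay from \cite[Proposition 3.11]{JDS2}, after first upgrading $u$ to a globally bounded function. Since $g=0$, extending $u$ trivially outside $\Om$ produces a function on $\mb R^N$ with automatically finite nonlocal tails, so covering $\ov\Om$ by finitely many balls and invoking Proposition \ref{localbdd} on each gives a global bound $M:=\|u\|_{L^\infty(\mb R^N)}\leq C(\Om,N,s_1,p,s_2,q,\|f\|_{L^\infty(\Om)})$. Feeding this into \cite[Proposition 3.11]{JDS2} then produces $|u(x)|\leq C_1 d(x)^{s_1}$ for all $x\in\Om$, where $d(x):=\mathrm{dist}(x,\pa\Om)$.

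Next, I fix $\alpha\in(0,s_1)$ satisfying the constraints of Theorem \ref{intreg}; this is admissible because $(ps_1-qs_2)/(p-q)\geq s_1$ (as $s_2\leq s_1$ and $p>q$), $qs_2/(q-1)>s_2$, and, since $f\in L^\infty$ allows $\ga$ arbitrarily large, $(\ga s_1p-N)/(\ga(p-1))$ can be pushed arbitrarily close to $s_1p/(p-1)>s_1$. Given $x\neq y$ in $\ov\Om$ with $d_x:=d(x)\leq d_y:=d(y)$, I split into three regimes. If $|x-y|\geq 1/4$, the trivial bound $|u(x)-u(y)|\leq 2M\leq 2M\cdot 4^\alpha|x-y|^\alpha$ suffices. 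If $|x-y|<1/4$ and $|x-y|\geq d_x/16$, then $d_y\leq d_x+|x-y|\leq 17|x-y|$, and the boundary decay yields $|u(x)-u(y)|\leq C_1(d_x^{s_1}+d_y^{s_1})\leq C|x-y|^{s_1}\leq C'|x-y|^\alpha$ (using $\alpha\leq s_1$ and $|x-y|\leq 1/4$).

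The deep-interior regime $|x-y|<d_x/16$ is the heart of the argument. Setting $R_0:=2|x-y|$ and $r:=|x-y|$, one checks $B_{2R_0}(x)\subset B_{d_x/4}(x)\Subset\Om$ and $R_0<1/2$, so Theorem \ref{intreg} applies and gives
\begin{equation*}
|u(x)-u(y)|\leq c\Big(\tfrac{r}{R_0}\Big)^{\!\alpha} K\,|x-y|^\alpha = c\,2^{-\alpha}K\,|x-y|^\alpha,
\end{equation*}
where $K$ denotes the bracket on the right-hand side of Theorem \ref{intreg}. Because $R_0\leq 1$, the polynomial powers of $R_0$ and the $L^\ga$ norm of $f$ in $K$ are controlled by $c(1+\|f\|_{L^\infty(\Om)}^{1/(p-1)})$, while a direct estimate using $|u|\leq M$ pointwise shows that both nonlocal tails $T_{p-1}(u;x,R_0)$ and $T_{q-1}(u;x,R_0)$ are bounded by $cM$. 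Hence $K$ is uniformly bounded in $R_0$, and the three regimes combine to yield \eqref{holderbd}.

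The hard part will be this uniform control of $K$ as $R_0\to 0$, which is what forces the case split and what makes the explicit form of the bracket in Theorem \ref{intreg} essential: both the polynomial factors in $R_0$ and, more delicately, the two nonlocal tails with distinct scalings $s_1p$ and $s_2q$ must all be absorbed by the global $L^\infty$ bound from Step 1. The compatibility $\alpha\leq s_1$ needed to match the boundary decay in the second regime is exactly what $f\in L^\infty$, allowing $\ga$ arbitrarily large, grants inside the interior constraint $\alpha<(\ga s_1p-N)/(\ga(p-1))$.
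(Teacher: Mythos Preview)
Your proposal is correct and follows essentially the same approach as the paper, which simply cites the boundary behavior from \cite[Proposition 3.11]{JDS2}, the interior estimate of Theorem \ref{intreg}, and refers to the proof of \cite[Theorem 1.1]{iann} for the combining argument that you have spelled out in detail via the three-regime case split. One small caveat: the covering argument for the global $L^\infty$ bound is not quite right as stated since Proposition \ref{localbdd} requires $B_r(x_0)\Subset\Om$, but the global bound for $u\in W^{s_1,p}_0(\Om)$ with $f\in L^\infty(\Om)$ is standard (e.g.\ via a global De Giorgi iteration or from \cite{JDS2}), so this does not affect the validity of the argument.
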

\begin{Remark}
 As a consequence of Corollary \ref{bdryreg}, we have the boundary regularity result for the critical exponent problem. Precisely, let $u\in W^{s_1,p}_0(\Om)$ be a solution to problem \eqref{probMain} with $f(x):=f(x,u)$ and $g\equiv 0$, where $f$ is a Carath\'eodory function such that $|f(x,t)|\leq C_0 (1+|t|^{p^*_{s_1}-1})$, with $C_0 >0$, and $p^*_{s_1}:=Np/(N-ps_1)$ if $N>ps_1$, otherwise an arbitrarily large number. Then, $u\in C^{0,\al}(\ov\Om)$, for some $\al\in (0,s_1]$, and \eqref{holderbd} holds. The proof follows by noting the fact that $u\in L^\infty(\Om)$, and hence $f\in L^\infty(\Om)$. 
\end{Remark}
\begin{Remark}\label{remmixed}
We further observe that for the case $0<s_2<s_1=1$, the problem \eqref{prob} exhibits a nonhomogeneous local-nonlocal behavior, see for instance \cite{BDVV,CKPV} for corresponding semilinear homogeneous counterparts. In this case, proceeding similarly we can prove an analogous result to Theorem \ref{intreg}. Subsequently, following the proofs of \cite[Lemma 2.2]{JDS} (without the approximation argument) and \cite[Proposition 3.11]{JDS2}, we obtain the boundary behavior of the weak solution. Consequently, we get the global H\"older continuity result as in Corollary \ref{bdryreg}.
\end{Remark}
As of independent interest, we have the following Harnack inequality for weak solutions.
\begin{Theorem}[Harnack inequality]\label{harnack}
	Let  $u\in X^{s_1,p}_{g}(\Om,\Om') \cap X^{s_2,q}_{g}(\Om,\Om')$ be a weak solution to problem \eqref{probMain} such that $u\geq 0$ in $B_R\equiv B_R(x_0)\Subset\Om$, for some $R\in (0,1)$. Then,  for all $0<r<R$, the following holds:
	\begin{align*}
		\sup_{B_{r/2}} u &\leq C \inf_{B_{r/2}} u+ C\Big(\frac{r}{R}\Big)^\frac{s_1p}{p-1} T_{p-1}(u_-;x_0,R) + C r^\frac{s_1p-s_2q}{p-1} \Big(\frac{r}{R}\Big)^\frac{s_2q}{p-1} T_{q-1}(u_-;x_0,R)^\frac{q-1}{p-1} \\
		&\quad+ C r^\frac{s_1p-s_2q}{p-q} +C\|f\|_{L^\ga(B_R)}^{1/(p-1)}, 
	\end{align*}
	where  $C=C(N,p,q,s_1,s_2)>0$ is a constant.
\end{Theorem}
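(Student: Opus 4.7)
\textbf{Plan for Theorem \ref{harnack}.} The approach is to chain the weak Harnack inequality of Proposition \ref{weakharn} with the local boundedness estimate of Proposition \ref{localbdd}, following the strategy of \cite[Theorem 1.1]{dicastroHarn}. Since $u\ge 0$ in $B_R(x_0)$, the negative part $u_-$ is supported in $\mb R^N\setminus B_R(x_0)$, so $u_-$ contributes only through the tails $T_{p-1}(u_-;x_0,R)$ and $T_{q-1}(u_-;x_0,R)$, while on $B_R$ we have $u=u_+$.

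The first step is to apply Proposition \ref{weakharn} on $B_r(x_0)$, $0<r<R$, to produce an exponent $t_0=t_0(N,p,q,s_1,s_2)>0$ and a constant $C>0$ with
\begin{equation*}
\left(\dashint_{B_{r/2}(x_0)} u^{t_0}\,dx\right)^{1/t_0} \leq C\,\inf_{B_{r/2}(x_0)} u + \mc T,
\end{equation*}
where $\mc T$ collects the four error terms appearing on the right-hand side of the Harnack inequality, namely the scaled $p$-tail of $u_-$, the $q$-tail of $u_-$ rescaled by the exponent $(q-1)/(p-1)$, the non-homogeneity correction $r^{(s_1p-s_2q)/(p-q)}$, and $\|f\|_{L^\ga(B_R)}^{1/(p-1)}$.

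The second step is to apply Proposition \ref{localbdd} to control $\sup_{B_{r/2}(x_0)} u$ by an $L^{t_0}$-average of $u$ on a slightly larger concentric ball, plus tail and source contributions of the same form as $\mc T$. Proposition \ref{localbdd} is naturally stated at an integrability exponent not smaller than $p$; to lower the exponent to the $t_0$ dictated by the weak Harnack, I would iterate on the chain of shrinking balls $B_{r_k}(x_0)$ with $r_k=(r/2)(1+2^{-k})$, using Young's inequality at each scale to absorb the $L^\infty$ term arising from the interpolation, so that in the limit one obtains a sup estimate at exponent $t_0$. The $u_+$-tails produced by the local boundedness reduce to $u_-$-tails in the statement up to $\sup_{B_R} u$ contributions, which are themselves absorbed on the left by the iteration.

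Combining the two estimates produces $\sup_{B_{r/2}(x_0)} u \leq C \inf_{B_{r/2}(x_0)} u + C\mc T$, which is the claim. The main obstacle is the careful bookkeeping of the \emph{two} nonlocal tails, which scale with different exponents ($s_1 p$ versus $s_2 q$), together with the non-homogeneity term $r^{(s_1p-s_2q)/(p-q)}$, through the absorption iteration: Young's inequality must be calibrated at each step so that the $(q-1)/(p-1)$-rescaled $q$-tail and the $r^{s_1p-s_2q}$ correction survive the geometric summation with no extra factors. This is precisely the difficulty flagged in Section~3, where an additional summand of the form $r^{s_1p-s_2q}$ is inserted into the parameters of the Caccioppoli-type inequality in order to keep the iteration closed; reusing that device here is what will allow me to carry the tail terms across both the weak Harnack and the local boundedness steps without loss.
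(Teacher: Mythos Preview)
Your overall architecture matches the paper's: combine the local boundedness estimate with the weak Harnack inequality via a shrinking-ball iteration in the spirit of \cite[Theorem 1.1]{dicastroHarn}. However, one step in your outline is stated in a way that would not close.

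You write that the $u_+$-tails coming out of the local boundedness ``reduce to $u_-$-tails in the statement up to $\sup_{B_R} u$ contributions, which are themselves absorbed on the left by the iteration.'' This reduction is not automatic; in the paper it is the content of Lemma~\ref{lem5}, proved by testing the equation with $(u-2k)\phi$ where $k=\sup_{B_r}u$. What comes out is a bound of the form
\[
T_{p-1}(u_+;x_0,r)^{p-1}+r^{s_1p-s_2q}T_{q-1}(u_+;x_0,r)^{q-1}
\le C\big(\sup_{B_r}u\big)^{p-1}+\text{($u_-$-tails and correction terms)},
\]
with the supremum taken over the \emph{same small ball} $B_r$, not $B_R$. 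If the residual were $\sup_{B_R}u$ as you wrote, the absorption iteration would fail outright, since $\sup_{B_R}u$ cannot be controlled by $\sup_{B_{r/2}}u$ for $r<R$. The paper then feeds Lemma~\ref{lem5} into Corollary~\ref{corlocalbd} (the variant of Proposition~\ref{localbdd} in which the $q$-tail already appears with exponent $(q-1)/(p-1)$, matching the target inequality), so that the sup term enters as $C\vep\,\sup_{B_\rho}u$ with $\vep$ free. A covering argument with $\rho=(\kappa-\kappa')r$ then yields $\sup_{B_{\kappa'r}}u\le \tfrac12\sup_{B_{\kappa r}}u+\ldots$ after choosing $\vep$ small and applying Young's inequality to lower the integrability exponent to any $t<p\sigma$; this is what the standard iteration lemma consumes, after which Proposition~\ref{weakharn} closes the argument.

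So your plan becomes correct once you (i) replace the vague tail reduction by an explicit appeal to Lemma~\ref{lem5}, with the sup over the intermediate ball rather than $B_R$; (ii) use Corollary~\ref{corlocalbd} so that the $q$-tail exponent matches; and (iii) insert the covering step before the shrinking-ball iteration.
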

Next, we have the following strong maximum principle.
\begin{Theorem}\label{strngmax}
	Suppose $1<q\leq p<\infty$. Let $g\in C(\mb R)\cap BV_{\rm loc}(\mb R)$ and let $u\in W^{s_1,p}_0(\Om)\cap C(\ov\Om)$ be such that 
	\begin{align*}
		(-\De)_p^{s_1} u+ (-\De)_q^{s_2} u +g(u)\geq g(0) \quad\mbox{weakly in }\Om.
	\end{align*}
	Further, assume that $u\not\equiv 0$ with $u\geq 0$ in $\Om$.  Then, there exists $c_1>0$ such that $u\geq c_1 {\rm dist}(\cdot,\pa\Om)^{s_1}$ in $\Om$.
\end{Theorem}

 \section{Some technical results}
 In this section, we prove some preliminary results such as Caccioppoli type inequality, local boundedness result and expansion of positivity result, which are required to prove our main theorems.
 \begin{Lemma}\label{lem1}
 Let  $u\in X^{s_1,p}_{g}(\Om,\Om') \cap X^{s_2,q}_{g}(\Om,\Om')$ be a super-solution to problem \eqref{probMain} such that $u\geq 0 $ in $B_R\equiv B_R(x_0)\subset\Om$, for $R\in (0,1)$. For $k\geq 0$, suppose that there exists $\nu\in(0,1]$ such that
 \begin{align}\label{eq10}
 	\frac{|B_r \cap \{ u \ge k \}|}{|B_r|} \geq \nu \quad\mbox{for all } r\in (0,R/16).
 \end{align}
Then, there exists a constant $c=c(N,s_1,p,s_2,q)>0$ such that, for all $\de\in (0,1/4)$:
 \begin{align*}
  \bigg|B_{6r} \cap &\left\{ u \leq 2\de k-\frac{1}{2} \Big(\frac{r}{R}\Big)^\frac{s_1p}{p-1}T_{p-1}(u_-;x_0,R) -\frac{1}{2}\Big(\frac{r}{R}\Big)^\frac{s_2q}{q-1}T_{q-1}(u_-;x_0,R)
  -\frac{1}{2} r^\frac{s_1p-s_2q}{p-q} \right. \\
 &\left.  -\frac{1}{2} r^\frac{\ga s_1p-N}{\ga(p-1)} \| f \|_{L^\ga(B_R)}^\frac{1}{p-1}   \right\}\bigg| 
  \leq \frac{c \; |B_{6r}|}{\nu \log(\frac{1}{2\de})}.
 \end{align*}
\end{Lemma}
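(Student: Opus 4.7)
The strategy adapts the ``logarithmic lemma'' of Di Castro--Kuusi--Palatucci to the non-homogeneous fractional $(p,q)$ setting with $L^\gamma$ right-hand side. Define
\[ d := \tfrac12\Big(\tfrac{r}{R}\Big)^{\frac{s_1 p}{p-1}} T_{p-1}(u_-;x_0,R) + \tfrac12\Big(\tfrac{r}{R}\Big)^{\frac{s_2 q}{q-1}} T_{q-1}(u_-;x_0,R) + \tfrac12 r^{\frac{s_1 p - s_2 q}{p-q}} + \tfrac12 r^{\frac{\gamma s_1 p - N}{\gamma(p-1)}} \|f\|_{L^\gamma(B_R)}^{\frac{1}{p-1}}, \]
and $\bar u := u + d \ge d > 0$ on $B_R$. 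The target set in the conclusion is precisely $\{\bar u \le 2\delta k\} \cap B_{6r}$, so it suffices to prove that $\log \bar u$ has bounded $W^{s_1,p}(B_{6r})$-oscillation and then invoke a Chebyshev-type argument together with the density hypothesis \eqref{eq10}.

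The first task is to pick a cutoff $\eta \in C_c^\infty(B_{15r/2})$ with $\eta \equiv 1$ on $B_{7r}$ and $|\nabla \eta| \le C/r$, and test the supersolution inequality \eqref{solndef} with the non-negative admissible function $\phi := \eta^p \bar u^{1-p}$. For the $p$-integral, the classical pointwise inequality
\[ [a - b]^{p-1}\bigl(\eta_x^p a^{1-p} - \eta_y^p b^{1-p}\bigr) \le -c_1 (\eta_x \wedge \eta_y)^p \bigl|\log(a/b)\bigr|^p + c_2 |\eta_x - \eta_y|^p, \]
applied with $a = \bar u(x)$, $b = \bar u(y)$, produces the local logarithmic seminorm on the left; the non-local contribution coming from $\mathbb{R}^N \setminus B_{15r/2}$ is controlled by a multiple of $r^N d^{1-p} R^{-s_1 p} T_{p-1}(u_-;x_0,R)^{p-1}$, which is absorbed by the first summand in $d$, and the H\"older bound $\int f \phi\,dx \le \|f\|_{L^\gamma(B_R)} |B_{15r/2}|^{1-1/\gamma} d^{1-p}$ is exactly what forces the fourth summand in $d$. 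The $q$-integral, tested against the $p$-adapted function $\bar u^{1-p}$ rather than $\bar u^{1-q}$, admits a local lower bound of the form $c_1 \bar u(x)^{q-p}\,|\log \bar u(x) - \log \bar u(y)|^q$ (modulo a $|\eta_x-\eta_y|^q$ error); the pointwise lower bound $\bar u \ge \tfrac12 r^{(s_1p - s_2q)/(p-q)}$ coming from the third summand in $d$ converts this $q$-scale quantity into one comparable to $r^{N - s_1 p}$, and its own tail is absorbed analogously by the $T_{q-1}$-summand.

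Combining, we obtain $\iint_{B_{6r}^2} |\log \bar u(x) - \log \bar u(y)|^p\,|x-y|^{-N-s_1 p}\,dx\,dy \le C\, r^{N - s_1 p}$, whence by the fractional Poincar\'e inequality $\dashint_{B_{6r}} |\log \bar u - (\log \bar u)_{B_{6r}}|^p\,dx \le C$. Since \eqref{eq10} on $B_r \subset B_{6r}$ gives $|\{\log \bar u \ge \log(k+d)\} \cap B_{6r}| \ge \nu\, 6^{-N}|B_{6r}|$, while on the target set $\log \bar u \le \log(2\delta k)$, the gap $\log(1/(2\delta))$ together with Chebyshev yields the claimed measure bound $c\,|B_{6r}|/(\nu\,\log(1/(2\delta)))$. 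The \textbf{main obstacle} is the $q$-integral step: the non-homogeneity forces the use of a test function adapted to only one of the operators, and the precise choice of the summand $r^{(s_1 p - s_2 q)/(p-q)}$ in $d$ is the exact compensator that rebalances the two scales so that the $q$-contribution absorbs into the $p$-scale framework; carefully tracking how this summand propagates through the local, tail, and $f$-estimates is the genuinely new ingredient compared with the homogeneous fractional $p$-Laplacian proof.
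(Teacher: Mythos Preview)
Your proposal is correct and follows essentially the same route as the paper: shift $u$ by an additive constant built from the two tails, the $f$-norm, and the compensator $r^{(s_1p-s_2q)/(p-q)}$, test with $\bar u^{1-p}$ times a cutoff, extract the log-seminorm from the $p$-part, absorb the $q$-error and both tails via the summands of the shift, and conclude by Poincar\'e plus the density hypothesis (your choice $d=\lambda/2$ rather than the paper's full $\lambda$ is in fact slightly cleaner, since the target set is then exactly $\{\bar u\le 2\delta k\}$). One wording point to tighten: the $q$-integral does not yield a useful ``$c_1\bar u^{q-p}|\log|^q$'' term to be converted---the paper (and your argument, read correctly) shows the principal $q$-contribution has a favorable sign and simply discards it; the compensator $r^{(s_1p-s_2q)/(p-q)}$ is what bounds the \emph{error} term $c\,\bar u^{q-p}|\eta_x-\eta_y|^q$, giving $c\,d^{q-p}r^{N-s_2q}\le c\,r^{N-s_1p}$ after integration.
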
 
\begin{proof}
 Let $\la>0$ be a parameter (to be chosen later) and set $\bar{u}= u+\la$. Then, $\bar{u}$ is still a super-solution to \eqref{probMain}. Let $\phi\in C_c^\infty(B_{7r})$ be such that $0\leq\phi\leq 1$, $|\na \phi|\leq c/r$ in $B_{7r}$ and $\phi\equiv 1$ in $B_{6r}$. We take $\bar{u}^{1-p}\phi^p$ as a test function in the weak formulation \eqref{solndef}, to get
  \begin{align*}
 	0 &\leq \sum_{(\el,s)}\int_{B_{8r}} \int_{B_{8r}}  \frac{[\bar u(x)-\bar u(y)]^{\el-1}}{|x-y|^{N+s\el}}\left[\frac{\phi^p(x)}{\bar u(x)^{p-1}}-\frac{\phi^p(y)}{\bar u(y)^{p-1}}\right]dxdy 
 		\\
 	&\quad  + 2 \sum_{(\el,s)}\int_{{\mb R^N}\setminus B_{8r}}\int_{B_{8r}} \frac{[\bar u(x)-\bar u(y)]^{\el-1}}{|x-y|^{N+s\el}} \frac{\phi^p(x)}{\bar u(x)^{p-1}}dxdy
 	 - \int_{\Om} f(x) \bar{u}^{1-p}(x)\phi^p(x)dx  \\ 
 		&=:J_1(\el)+J_2(\el)-J_3(f). 
 \end{align*}
 We estimate the quantities $J_1$, $J_2$ and $J_3$ in the following steps:\\
 \textit{Step I}: Estimate of $J_1(p)$.\\
 Following the proof of \cite[Lemma 1.3, (3.12) and (3.17)]{dicastro}, we have 
 \begin{align*}
 	J_1(p)&\leq -c \int_{B_{8r}} \int_{B_{6r}} |x-y|^{-N-s_1p} \bigg|\log\left(\frac{\bar u(x)}{\bar u(y)}\right)\bigg|^p dxdy + c \int_{B_{8r}} \int_{B_{8r}}  \frac{|\phi(x)-\phi(y)|^p}{|x-y|^{N+s_1p}} dxdy \\
 	&\leq -c \int_{B_{8r}} \int_{B_{6r}} |x-y|^{-N-s_1p} \bigg|\log\left(\frac{\bar u(x)}{\bar u(y)}\right)\bigg|^p dxdy + c r^{N-s_1p}.
 \end{align*}
 \textit{Step II}: Estimate of $J_1(q)$.\\
 Without loss of generality, we assume that $u(x)> u(y)$. Recall the following inequality \cite[Lemma 3.1]{dicastro}: For $p\geq 1$, $\e\in (0,1]$ and for all $a,b\in \mb R^N$, we have 
 \begin{align}\label{ineqlem3.1}
 	|a|^p\leq (1+c_p\e) |b|^p + (1+c_p\e)  \e^{1-p} |a-b|^p, \quad\mbox{where }c_p:=(p-1)\Ga(\max\{1,p-2\}).
 \end{align}
For the choice $p=q$, $a=\phi^{p/q}(x)$, $b=\phi^{p/q}(y)$ together with
 \begin{align*}
 	\epsilon = t\frac{\bar u(x)-\bar u(y)}{\bar u(x)} \in (0,1) \quad\mbox{with } t\in(0,1),
 \end{align*}
  in \eqref{ineqlem3.1} (note that $u\geq 0$ in $B_{8r}$), we obtain
 \begin{align*}
 	\phi^p(x) \leq \left(1+c t\frac{\bar u(x)-\bar u(y)}{\bar u(x)}\right)\phi^{p}(y) + (1+c \e)t^{1-q} \left(\frac{\bar u(x)-\bar u(y)}{\bar u(x)}\right)^{1-q} |\phi^{p/q}(x)-\phi^{p/q}(y)|^q.
 \end{align*}
 This implies that 
 \begin{align*}
 	(\bar u(x)-\bar u(y))^{q-1}  \frac{\phi^p(x)}{\bar u(x)^{p-1}} \leq& \left(1+c t \frac{\bar u(x)-\bar u(y)}{\bar u(x)} \right) \frac{(\bar u(x)-\bar u(y))^{q-1}}{\bar u(x)^{p-1}} \phi^p(y) \\
 	&+(1+c) t^{1-q}\bar u(x)^{q-p}|\phi^{p/q}(x)-\phi^{p/q}(y)|^q.
 \end{align*}
 Now, using the relation $
 |\phi(x)^{p/q}-\phi(y)^{p/q}| \leq \frac{p}{q}(\phi(x)^{p/q}+\phi(y)^{p/q})^{(p-q)/p} |\phi(x)-\phi(y)|$, we get
 {\small \begin{align*}
 		(\bar u(x)-\bar u(y))^{q-1} \left[ \frac{\phi^p(x)}{\bar u(x)^{p-1}} - \frac{\phi^p(y)}{\bar u(y)^{p-1}}\right] 
 		&\leq \phi^p(y) \frac{(\bar u(x)-\bar u(y))^{q-1}}{\bar u(x)^{p-1}} \left[ 1+c t \frac{\bar u(x)-\bar u(y)}{\bar u(x)}   -\frac{\bar u(x)^{p-1}}{\bar u(y)^{p-1}} \right] \\
 		&\quad +c t^{1-q} \la^{q-p} |\phi(x)-\phi(y)|^q,
 \end{align*}}
 \noi where in the last inequality we have used the relation $\bar u\geq \la$ in $B_{R}$. The terms inside the bracket on the right hand side of the above expression is similar to the one of \cite[(3.6)]{dicastro}.  Therefore,  (see \cite[(3.9)]{dicastro} and the expression for $g(z)\leq -p+1$ there)
 \begin{align*}
 	 \left[ 1+c t \frac{\bar u(x)-\bar u(y)}{\bar u(x)}   -\frac{\bar u(x)^{p-1}}{\bar u(y)^{p-1}} \right] \leq 0.
 \end{align*}
 And an analogous result holds for the case $u(x)<u(y)$. Thus 
 \begin{align*}
 	J_1(q) \leq c \la^{q-p} \int_{B_{8r}} \int_{B_{8r}} |x-y|^{-N-s_2q} |\phi(x)-\phi(y)|^q dxdy 
 	\leq c \la^{q-p} r^{N-s_2q}.
 \end{align*}
 \textit{Step III}: Estimate of $J_2(p)$.\\
 From \cite[Lemma 3.1, p.1818]{dicastroHarn}, we have 
 \begin{align*}
 	J_2(p) &\leq  c r^{N-s_1p}+ c \la^{1-p} r^N R^{-s_1p} T_{p-1}(u_-;x_0,R)^{p-1}.
 \end{align*}
 \textit{Step IV}: Estimate of $J_2(q)$.\\
 Noting $u\geq 0$ in $B_R$, we have 
 \begin{align*}
 	&\frac{u(x)-u(y)}{u(x)+\la} \leq 1 \mbox{ for all }x\in B_{8r}, \ y\in B_R, \\
 	&(u(x)-u(y))_+^{q-1} \leq 2^{q-1} [ u(x)^{q-1}+(u(y))_-^{q-1} ] \ \mbox{ for all }x\in B_{8r}, \ \ y\in \mb R^N\setminus B_{R}.
 \end{align*}
 Therefore,
 \begin{align*}
 	&J_2(q)\nonumber\\
 	 &\leq 2 \left( \int_{B_R\setminus B_{8r}} \int_{B_{8r}} + \int_{\mb R^N\setminus B_{R}} \int_{B_{8r}} \right) |x-y|^{-N-s_2q} \frac{(u(x)-u(y))_+^{q-1}}{(u(x)+\la)^{p-1}}\phi^p(x) dxdy \\
 	&\leq \la^{q-p} \int_{\mb R^N\setminus B_{8r}} \int_{B_{8r}}  \frac{\phi^p(x)}{|x-y|^{N+s_2q}}dxdy + c \int_{\mb R^N\setminus B_{R}} \int_{B_{8r}} \frac{u(x)^{q-1}+(u(y))_-^{q-1}}{(u(x)+\la)^{p-1}}\frac{\phi^p(x)dxdy}{|x-y|^{N+s_2q}} \\
 	&\leq \la^{q-p} \int_{\mb R^N\setminus B_{8r}} \int_{B_{7r}} \frac{\phi^p(x)}{|x-y|^{N+s_2q}}dxdy + c \la^{1-p} \int_{\mb R^N\setminus B_{R}} \int_{B_{7r}} \frac{(u(y))_-^{q-1}\phi^p(x)}{|x-y|^{N+s_2q}} dxdy \\
 	&\leq cr^{N}\la^{q-p} \int_{\mb R^N\setminus B_{8r}}  \frac{1}{|x_0-y|^{N+s_2q}}dy + c r^N \la^{1-p} \int_{\mb R^N\setminus B_{R}}  \frac{(u(y))_-^{q-1}}{|x_0-y|^{N+s_2q}} dy,
 \end{align*}
that is, 
 \begin{align*}
 	J_2(q)\leq \la^{q-p} c r^{N-s_2q} + c r^N \la^{1-p} R^{-s_2q}  T_{q-1}(u_-;x_0,R)^{q-1}.
 \end{align*}
\textit{Step V}: Estimate of $J_3(f)$.\\
 Noting the bounds on $\bar u$ and $\phi$, we have 
 \begin{align*}
	|-J_3(f)|\leq \int_{\Om} |f(x)| \bar{u}^{1-p}(x)\phi^p(x)dx \leq \la^{1-p} \int_{B_{7r}} |f|\phi^p &\leq c \la^{1-p}\| f \|_{L^\ga(B_R)} r^{N/\ga'}. 
\end{align*}
Combining the estimates of steps I through V, we obtain
{\small\begin{align}\label{eq11}
		\int_{B_{6r}} \int_{B_{6r}} \bigg|\log\left(\frac{\bar u(x)}{\bar u(y)}\right)\bigg|^p \frac{dxdy}{|x-y|^{N+s_1p} }
		&\leq c r^{N-s_1p} + c \la^{q-p} r^{N-s_2q}+c \la^{1-p} r^N R^{-s_1p} T_{p-1}(u_-;x_0,R)^{p-1} \nonumber\\
		&\ +c r^N \la^{1-p} R^{-s_2q}  T_{q-1}(u_-;x_0,R)^{q-1}+c \la^{1-p}\| f \|_{L^\ga(B_R)} r^{N/\ga'}.
\end{align}}
Next, we take 
\begin{align}\label{eqlamb}
	\la:= \Big(\frac{r}{R}\Big)^\frac{s_1p}{p-1}T_{p-1}(u_-;x_0,R) +\Big(\frac{r}{R}\Big)^\frac{s_2q}{q-1}T_{q-1}(u_-;x_0,R) +r^\frac{s_1p-s_2q}{p-q}+\| f \|_{L^\ga(B_R)}^\frac{1}{p-1} r^\frac{\ga s_1p-N}{\ga(p-1)}
\end{align}
 and note that $(\ga s_1p-N)/\ga=N/\ga'-N+s_1p > 0$ (thanks to the assumption on $\ga$).
Moreover, by noting $\la \geq r^\frac{s_1p-s_2q}{p-q}$, we observe that 
\begin{align*}
 \la^{1-p} r^N R^{-s_2q}  T_{q-1}(u_-;x_0,R)^{q-1} &= r^{N-s_1p}\la^{1-q} \Big(\frac{r}{R}\Big)^{s_2q} T_{q-1}(u_-;x_0,R)^{q-1} \la^{q-p}	r^{s_1p-s_2q} \\
 &\leq r^{N-s_1p}.
\end{align*}
Therefore, from \eqref{eq11}, taking into account \eqref{eqlamb},  we get 
 \begin{align}\label{eq12}
 	\int_{B_{6r}} \int_{B_{6r}} |x-y|^{-N-s_1p} \bigg|\log\left(\frac{\bar u(x)}{\bar u(y)}\right)\bigg|^p dxdy 
 	\leq c r^{N-s_1p}.
 \end{align}
Next, for any $\de\in (0,1/4)$, we set 
 \begin{align*}
 	v:= \bigg[\min\bigg\{ \log\frac{1}{2\de}, \log\frac{k+\la}{\bar u} \bigg\}\bigg]_+.
 \end{align*}
 Noting that $v$ is a truncation of the sum of a constant and $\log\bar u$, and using the fractional Poincar\'e type inequality, we have 
 \begin{align*}
 	\Xint-_{B_r} |v(x)-(v)_{B_r}|^p dx &\leq C r^{s_1p-N} \int_{B_{6r}}\int_{B_{6r}} \frac{|v(x)-v(y)|^p}{|x-y|^{N+s_1p}}dxdy \\
 	&\leq  Cr^{s_1p-N}\int_{B_{6r}} \int_{B_{6r}} |x-y|^{-N-s_1p} \bigg|\log\left(\frac{\bar u(x)}{\bar u(y)}\right)\bigg|^p dxdy \leq C,
 \end{align*}
where in the last inequality we have used \eqref{eq12}. This, on using H\"older's inequality, yields
 \begin{align}\label{eq13}
 	\Xint-_{B_r} |v(x)-(v)_{B_r}|dx \leq C.
 \end{align}
 By the definition of $v$, we observe that 
 \begin{align*}
 	\{v=0\}=\{\bar u \geq k+\la \}=\{ u\geq k\}.
 \end{align*}
Hence, from the assumption of the lemma (see \eqref{eq10}), we have 
 \begin{align*}
 	\frac{|B_{6r} \cap \{ v=0 \}|}{|B_{6r}|} \geq \frac{\nu}{6^N}.
 \end{align*}
Therefore, 
 \begin{align*}
	\log\frac{1}{2\de}=\frac{1}{| B_{6r}\cap\{ v=0 \}|}\int_{B_{6r}\cap\{ v=0 \}}\log\frac{1}{2\de} dx &\leq \frac{6^N}{\nu}\frac{1}{ |B_{6r}|} \int_{ B_{6r}} \Big(\log\frac{1}{2\de}-v(x)\Big)dx \\
	&=\frac{6^N}{\nu} \bigg[\log\frac{1}{2\de}-(v)_{B_{6r}}\bigg],
\end{align*}
which upon integration and using \eqref{eq13} implies that
\begin{align*}
	\frac{|B_{6r}\cap\{ v=\log\frac{1}{2\de} \}|}{|B_{6r}|}\log\frac{1}{2\de} &\leq \frac{6^N}{\nu|B_{6r}|}\int_{B_{6r}\cap\{ v=\log\frac{1}{2\de} \}} [\log\frac{1}{2\de}-(v)_{B_{6r}}] dx \nonumber \\
	&\leq\frac{2}{|B_{6r}|}\int_{B_{6r}}|v(x)-(v)_{B_{6r}}|dx 
	\leq C.
\end{align*}
Again, in view of the definition of $v$, for all $\de\in (0,1/4)$, we get 
\begin{align*}
	\frac{|B_{6r}\cap\{ \bar u \leq 2\de(k+\la) \}|}{|B_{6r}|} \leq \frac{c}{\nu}\frac{1}{\log\frac{1}{2\de}}.
\end{align*}
 This yields the required result of the lemma upon using the fact that $\bar u= u+\la$, where $\la$ is given by \eqref{eqlamb}.\QED
\end{proof}
By slightly modifying the proof of Lemma \ref{lem1}, we have the following result.
\begin{Lemma}
 Suppose $1<q\leq p<\infty$. Let $u\in W^{s_1,p}(\Om)\cap L^\infty_{\rm loc}(\Om)$ be a weak super-solution to $(-\De)_p^{s_1} u+ (-\De)_q^{s_2} u\geq 0$ in $\Om$ such that $u\ge 0$ in $B_R\equiv B_R(x_0)\subset\Om$, for some $R\in (0,1)$. Then, for all $r\in (0,R/4)$ and $\la>0$, the following holds:
	\begin{align}\label{eqA18}
		\Xint-_{B_r}\int_{B_r} \bigg|\log\frac{u(x)+\la}{u(y)+\la}\bigg|^q\frac{dxdy}{|x-y|^{N+s_2q}}
		&\leq c\la^{1-q}\big[ R^{-s_1p}T_{p-1}(u_-;R)^{p-1}+ R^{-s_2q}T_{q-1}(u_-;R)^{q-1}\big] \nonumber\\
		&\quad+ c r^{-s_2q}+cr^{-s_1p}(\|u\|_{L^\infty(B_R)}+\la)^{p-q}.     
	\end{align}
\end{Lemma}
\begin{proof}
	Set $\ov u=u+\la$ and take $\ov u(x)^{1-q}\phi(x)^p$ as a test function, where $\phi\in C_c^\infty(B_{3r/2})$ is such that $0\leq \phi\le 1$, $|\na \phi|\leq c/r$ in $B_{3r/2}$ and $\phi\equiv 1$ in $B_r$. Thus,
	\begin{align}\label{eqA10}
		0 &\leq \sum_{(\el,s)}\int_{B_{2r}} \int_{B_{2r}}  \frac{[\bar u(x)-\bar u(y)]^{\el-1}}{|x-y|^{N+s\el}}\left[\frac{\phi^p(x)}{\bar u(x)^{q-1}}-\frac{\phi^p(y)}{\bar u(y)^{q-1}}\right]dxdy 
		\nonumber\\
		&\quad  + 2 \sum_{(\el,s)}\int_{\mb R^N\setminus B_{2r}}\int_{B_{2r}} \frac{[\bar u(x)-\bar u(y)]^{\el-1}}{|x-y|^{N+s\el}} \frac{\phi^p(x)}{\bar u(x)^{q-1}}dxdy \nonumber\\ 
		&=:I_1(\el)+I_2(\el). 
	\end{align}
	We estimate the quantities $I_1$ and $I_2$ in the following steps:\\
	\textit{Step I}: Estimate of $I_1(p)$.\\
	Without loss of generality, we assume $u(x)> u(y)$. Employing \eqref{ineqlem3.1}, for the choice $a=\phi(x)$, $b=\phi(y)$ together with
	\begin{align*}
		\epsilon = t\frac{\bar u(x)-\bar u(y)}{\bar u(x)} \in (0,1) \quad\mbox{with } t\in(0,1),
	\end{align*}
	(note that $u\geq 0$ in $B_{2r}$), we obtain (upon simplification)
	{\small\begin{align*}
			(\bar u(x)-\bar u(y))^{p-1} \left[ \frac{\phi^p(x)}{\bar u(x)^{q-1}} - \frac{\phi^p(y)}{\bar u(y)^{q-1}}\right] 
			&\leq \phi^p(y) \frac{(\bar u(x)-\bar u(y))^{p-1}}{\bar u(x)^{q-1}} \left[ 1+c t \frac{\bar u(x)-\bar u(y)}{\bar u(x)}   -\frac{\bar u(x)^{q-1}}{\bar u(y)^{q-1}} \right] \\
			&\quad +c t^{1-p} (u(x)+\la)^{p-q} |\phi(x)-\phi(y)|^p.
	\end{align*}}
	Therefore, as in Lemma 3.1 (the first term on the r.h.s. is non-positive), we get 
	\begin{align}\label{eqA12}
		I_1(p)\leq c \int_{B_{2r}}\int_{B_{2r}} (u(x)+\la)^{p-q} \frac{|\phi(x)-\phi(y)|^p}{|x-y|^{N+s_1p}}dxdy\leq c(\|u\|_{L^\infty(B_R)}+\la)^{p-q} r^{N-s_1p}.
	\end{align}
	\textit{Step II}: Estimate of $I_1(q)$.\\
	From \eqref{ineqlem3.1} for the choice $p=q$, $a=\phi^{p/q}(x)$, $b=\phi^{p/q}(y)$ together with $\e$ as in step I, we get 
	{\small \begin{align*}
			(\bar u(x)-\bar u(y))^{q-1} \left[ \frac{\phi^p(x)}{\bar u(x)^{q-1}} - \frac{\phi^p(y)}{\bar u(y)^{q-1}}\right] 
			&\leq \phi^p(y) \frac{(\bar u(x)-\bar u(y))^{q-1}}{\bar u(x)^{q-1}} \left[ 1+c t \frac{\bar u(x)-\bar u(y)}{\bar u(x)}   -\frac{\bar u(x)^{q-1}}{\bar u(y)^{q-1}} \right] \\
			&\quad +c t^{1-q} |\phi^{p/q}(x)-\phi^{p/q}(y)|^q.
	\end{align*}}
	Following the proof of \cite[Lemma 1.3, (3.12) and (3.17)]{dicastro}, we have 
	\begin{align}\label{eqA13}
		I_1(q)\leq -c_2 \int_{B_{2r}}\int_{B_{2r}}  \bigg|\log\frac{u(x)+\la}{u(y)+\la}\bigg|^q\frac{\phi^{p}(y)dxdy}{|x-y|^{N+s_2q}}+ c \int_{B_{2r}}\int_{B_{2r}}  \frac{|\phi(x)-\phi(y)|^q}{|x-y|^{N+s_2q}}dxdy.
	\end{align}
	\textit{Step III}: Estimate of $I_2(\el)$.\\
	On a similar note to Lemma 3.1, we can prove the following:
	\begin{equation}\label{eqA14}
		\begin{aligned}
			&I_2(p)\leq c r^{N-s_1p} (\|u\|_{L^\infty(B_R)}+\la)^{p-q}+ c \la^{1-q} r^N R^{-s_1p} T_{p-1}(u_-;R)^{p-1} \quad\mbox{and} \\
			&I_2(q)\leq c r^{N-s_2q} + c \la^{1-q} r^N  R^{-s_2q}T_{q-1}(u_-;R)^{q-1}.
		\end{aligned}
	\end{equation}
	Combining \eqref{eqA12}, \eqref{eqA13} and \eqref{eqA14} with \eqref{eqA10}, and noting $\phi\equiv 1$ in $B_r$, we get the required result of the lemma. \QED
\end{proof}

Now, we have the expansion of positivity result as below.
\begin{Lemma}\label{lem2}
 Suppose that the hypotheses of Lemma \ref{lem1} hold true. 
 Then, there exists a constant $\de=\de(N,s_1,p,s_2,q)\in(0,1/4)$ such that 
 \begin{align*}
  \inf_{B_{4r}} u \geq& \de k  - 
  r^\frac{\ga s_1p-N}{\ga(p-1)} \| f \|_{L^\ga(B_R)}^\frac{1}{p-1}- r^\frac{s_1p-s_2q}{p-q}-\sum_{(\el,s)} \Big(\frac{r}{R}\Big)^\frac{s\el}{\el-1}T_{\el-1}(u_-;x_0,R).
 \end{align*}	
\end{Lemma}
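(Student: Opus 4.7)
The plan is to iterate the measure-smallness estimate from Lemma \ref{lem1} via a De Giorgi type scheme, in the spirit of \cite[Lemma 5.1]{dicastro} and \cite{dicastroHarn}, with the new non-homogeneous ingredient being the correction term $r^{(s_1 p - s_2 q)/(p-q)}$ built into the tail parameter. I would reuse the shift
\[\la := \sum_{(\el,s)} \Big(\frac{r}{R}\Big)^\frac{s\el}{\el-1} T_{\el-1}(u_-;x_0,R) + r^\frac{s_1 p-s_2 q}{p-q} + \|f\|_{L^\ga(B_R)}^{1/(p-1)} r^\frac{\ga s_1 p - N}{\ga(p-1)}\]
already used in the proof of Lemma \ref{lem1}, so that $\bar u := u + \la$ is a nonnegative supersolution in $B_R$. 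The target becomes to show $\bar u \geq \de k$ a.e.\ in $B_{4r}$ for a sufficiently small $\de \in (0,1/4)$; the desired lower bound on $u$ then follows from $u = \bar u - \la$.

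Next, I would set up the iteration with shrinking radii $r_j = 4r + 2r\cdot 2^{-j}$ and decreasing levels $k_j = \de k(1+2^{-j})$, so that $r_0=6r$, $r_\infty=4r$, $k_0=2\de k$ and $k_\infty=\de k$. Writing $B_j := B_{r_j}(x_0)$, $w_j := (k_j - \bar u)_+$ and $A_j := |B_j \cap \{\bar u < k_j\}|/|B_j|$, I would apply the non-homogeneous Caccioppoli inequality \cite[Lemma 3.1]{JDS2} to $\bar u$ on $B_j$, with a smooth cutoff that equals $1$ on $B_{j+1}$, and then combine with the fractional Sobolev embedding $W^{s_1,p} \hookrightarrow L^{p^*_{s_1}}$ and H\"older's inequality on the sub-level set $\{w_j>0\}$. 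This is expected to produce a recursion of the form
\[A_{j+1} \leq C\, 2^{j\beta} A_j^{1+\sigma}\]
for constants $\beta, \sigma > 0$ depending only on $N, p, q, s_1, s_2$. The classical Giusti iteration lemma then yields $A_j \to 0$, hence $\bar u \geq \de k$ in $B_{4r}$, provided $A_0$ is below an explicit threshold. Finally, by Lemma \ref{lem1},
\[A_0 = \frac{|B_{6r} \cap \{\bar u \leq 2\de k\}|}{|B_{6r}|} \leq \frac{c}{\nu \log(1/(2\de))},\]
so fixing $\de$ small enough (depending only on $N, p, q, s_1, s_2$, since $\nu$ is a datum of the hypothesis) makes the threshold hold.

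The main obstacle will be the derivation of the recursion in the non-homogeneous setting. The Caccioppoli estimate produces two nonlocal tails $T_{p-1}$ and $T_{q-1}$ with different homogeneities, together with an $L^\ga$ contribution from $f$. The reason the scheme works here is exactly the engineering of $\la$: the summand $r^{(s_1p-s_2q)/(p-q)}$ ensures $\la^{q-p} r^{s_1 p - s_2 q} \leq 1$, so that, as in Steps II and IV of the proof of Lemma \ref{lem1}, the $s_2q$-scaled tail and local $q$-terms are absorbed into the $s_1p$-scaled $p$-terms, reducing the recursion to one driven by the stronger $W^{s_1,p}$ Sobolev embedding alone. Care must also be taken that the $f$-contribution is controlled by $\la^{1-p}\|f\|_{L^\ga} r^{N/\ga'}$, which is bounded by virtue of the $r^{(\ga s_1 p - N)/(\ga(p-1))}$ summand in $\la$ together with the standing assumption $\ga s_1 p > N$.
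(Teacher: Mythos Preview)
Your approach is the paper's: a De Giorgi iteration on shrinking balls and levels, Caccioppoli plus fractional Sobolev to produce the geometric recursion, and Lemma~\ref{lem1} to make $A_0$ small. The only point that needs more care is how the recursion constant is made independent of the data. The paper first reduces to the case $\la \le \de k$ (otherwise $u\ge 0$ in $B_{4r}$ already gives $\inf_{B_{4r}} u \ge 0 \ge \de k - \la$ and there is nothing to prove), and then uses $b_j \ge \de k \ge \la$ to dominate \emph{each} summand of $\la$ by the current level. It is this comparison---not the relation $\la^{q-p} r^{s_1p-s_2q} \le 1$ you cite from Steps~II and~IV of Lemma~\ref{lem1}---that absorbs the $q$-local term (via $b_j \ge r^{(s_1p-s_2q)/(p-q)}$, hence $b_j^q r^{s_1p-s_2q} \le b_j^p$), controls both nonlocal tails (via $b_j \ge (r/R)^{s\el/(\el-1)} T_{\el-1}(u_-;x_0,R)$), and bounds the $f$-contribution (via $b_j^{p-1} \ge r^{(\ga s_1 p - N)/\ga}\|f\|_{L^\ga(B_R)}$). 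The mechanism from Lemma~\ref{lem1} pertains to the logarithmic test function $\bar u^{1-p}\phi^p$; here the test function is $(k_j-\bar u)_+\psi^p$, and it is the level $k_j$, not $\la$, that must dominate the extra terms. In your $\bar u$-formulation the case distinction is implicit (if $\la \ge \de k$ then $\bar u \ge \la \ge \de k$ trivially in $B_{4r}$), but in the non-trivial regime you still need to invoke $k_j \ge \de k \ge \la$ explicitly to close the recursion with a constant depending only on $N,p,q,s_1,s_2$.
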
 
\begin{proof}
 Due to the fact $u\geq 0$ in $B_R$, we may assume that (otherwise there is noting to prove)
 \begin{align}\label{eq14}
 	\Big(\frac{r}{R}\Big)^\frac{s_1p}{p-1}T_{p-1}(u_-;x_0,R) +\Big(\frac{r}{R}\Big)^\frac{s_2q}{q-1}T_{q-1}(u_-;x_0,R)+ r^\frac{s_1p-s_2q}{p-q}+  r^\frac{\ga s_1p-N}{\ga(p-1)} \| f \|_{L^\ga(B_R)}^\frac{1}{p-1}\leq \de k.
 \end{align}
Set $w_-=(b-u)_+$, for $b>0$. For any $r \leq \rho \leq 6r$ and fixed $\psi\in C_c^\infty(B_\rho)$ with $0\leq \psi\leq 1$, we take $w_-\psi^p$  as a test function in the weak formulation, to get 
 \begin{align*}
	0&\leq \sum_{(\el,s)} \int_{B_\rho}\int_{B_\rho} \frac{[u(x)-u(y)]^{\el-1} (w_-(x)\psi(x)^p-w_-(y)\psi(y)^p) }{|x-y|^{N+\el s}} dxdy \nonumber\\
	&\quad+2\sum_{(\el,s)} \int_{\mb R^N\setminus B_\rho}\int_{B_\rho} \frac{[u(x)-u(y)]^{\el-1}w_-(x)\psi(x)^p}{|x-y|^{N+\el s}} dxdy-\int_{\Om} f(x)w_-(x)\psi^p(x)dx \nonumber\\  
	&=:I_1(\el)+I_2(\el)-I_3(f).
\end{align*}
 To estimate $I_1(\el)$, we first observe that
 \[ [u(x)-u(y)]^{\el-1} (w_-(x)\psi(x)^p-w_-(y)\psi(y)^p) \leq -[w_-(x)-w_-(y)]^{\el-1} (w_-(x)\psi(x)^p-w_-(y)\psi(y)^p),\] 
 for all $x,y\in B_\rho$. Therefore, proceeding similarly to the proof of \cite[Lemma 3.1]{JDS2}, we have 
\begin{align*}
 I_1(\el) &\leq -\frac{1}{4}\int_{B_\rho}\int_{B_\rho} \frac{|w_-(x)-w_-(y)|^l}{|x-y|^{N+s\el}} \big(\psi(x)^p+\psi(y)^p\big) dxdy \\
 &\quad+C  \int_{B_\rho}\int_{B_\rho} \frac{|\psi(x)-\psi(y)|^\el}{|x-y|^{N+s\el}} \big(w_-(x)^p+w_-(y)^p\big)dxdy.	
 \end{align*}
 Now, using 
 \begin{align*}
 	|w_-(x)\psi(x) - w_-(y)\psi(y)|^p &\leq 2^{p-1} |w_-(x) - w_-(y)|^p (\psi(x)^p+\psi(y)^p) \nonumber \\
 	&\quad+ 2^{p-1} |\psi(x)-\psi(y)|^{p}(w_-(x)+w_-(y))^p,
 \end{align*}
 we obtain 
 \begin{align}\label{eq16}
 	I_1(p)+I_1(q) &\leq -c \int_{B_\rho}\int_{B_\rho} \frac{|w_-(x)\psi(x) - w_-(y)\psi(y)|^p}{|x-y|^{N+s_1p}} dxdy \nonumber\\
 	&\quad+ C\sum_{(\el,s)}   \int_{B_\rho}\int_{B_\rho} \frac{|\psi(x)-\psi(y)|^\el}{|x-y|^{N+s\el}} \big(w_-(x)^p+w_-(y)^p\big)dxdy.	
 \end{align}
Next, we estimate $I_2$, similarly to \cite[Lemma 3.2]{dicastroHarn}, as below
 \begin{align*}
 	I_2(\el) &= \left( \int_{(\mb R^N\setminus B_\rho)\cap \{u(y)<0\} }\int_{B_\rho} + \int_{(\mb R^N\setminus B_\rho)\cap \{u(y)\geq 0\} }\int_{B_\rho} \right) \frac{[u(x)-u(y)]^{\el-1}w_-(x)\psi(x)^p}{|x-y|^{N+\el s}} dxdy \\
 	&=: I_{2,1}+ I_{2,2}.
 \end{align*}
 First we note that 
 \begin{align*}
 	\frac{[u(x)-u(y)]^{\el-1}}{|x-y|^{N+s\el}} w_-(x)\psi(x)^p \leq (b+u(y)_-)^{\el-1} b \Big(\sup_{x\in {\rm supp}\psi} |x-y|^{-N-s\el} \Big) \chi_{B_\rho\cap\{u<b\}}(x).
 \end{align*}
Therefore,
\begin{align*}
 I_{2,1}(\el) \leq b \Big(\sup_{x\in {\rm supp}\psi} \int_{\mb R^N\setminus B_\rho} (b+u(y)_-)^{\el-1}|x-y|^{-N-s\el}dy \Big) |B_\rho\cap\{u<b\}|.
\end{align*}
For $I_{2,2}$, we observe that $u\geq 0$ in $B_\rho$, thus proceeding similarly, we obtain
 \begin{align}\label{eq17}
 	I_2(\el)=I_{2,1}(\el)+I_{2,2}(\el) \leq c b \Big(\sup_{x\in {\rm supp}\psi} \int_{\mb R^N\setminus B_\rho} (b+u(y)_-)^{\el-1}|x-y|^{-N-s\el}dy \Big) |B_\rho\cap\{u<b\}|.
 \end{align}
For $I_3(f)$, by observing that $w_-\leq b$ and applying H\"older's inequality, we have
 \begin{align}\label{eq17f}
 	|-I_3(f)| \leq b \| f \|_{L^\ga(B_R)} |B_\rho\cap\{u<b\}|^{1/\ga'}.
 \end{align}
 Therefore, taking into account \eqref{eq14}, \eqref{eq16}, \eqref{eq17} and \eqref{eq17f}, we deduce that 
\begin{align}\label{eq18}
 	&\int_{B_\rho}\int_{B_\rho} \frac{|w_-(x)\psi(x) - w_-(y)\psi(y)|^p}{|x-y|^{N+s_1p}}dxdy  \nonumber\\
 	&\leq C\sum_{(\el,s)}   \int_{B_\rho}\int_{B_\rho} \frac{|\psi(x)-\psi(y)|^\el}{|x-y|^{N+s\el}} \big(w_-(x)^p+w_-(y)^p\big)dxdy \nonumber \\
 	& \quad+ Cb \sum_{(\el,s)} \Big(\sup_{x\in {\rm supp}\psi} \int_{\mb R^N\setminus B_\rho} \frac{(b+u(y)_-)^{\el-1}}{|x-y|^{N+s\el}}dy \Big)|B_\rho\cap\{u<b\}| +b  \| f \|_{L^\ga(B_R)} |B_\rho\cap\{u<b\}|^\frac{1}{\ga'}.
 \end{align}
Now, we start the iteration process to conclude the proof. Let 
 \begin{align*}
 	b\equiv b_j:= \de k + 2^{-j-1}\de k, \ \rho\equiv \rho_j:= 4r+2^{1-j}r \ \mbox{ and }\bar{\rho}_j=\frac{\rho_j+\rho_{j+1}}{2},
 \end{align*}
for all $j\in \mb N\cup\{0\}$. Observe that $\rho_j$ and $\bar{\rho}_j\in (4r,6r)$ and 
 \begin{align*}
 	b_j - b_{j+1} = 2^{-j-2}\de k \geq 2^{-j-3}b_j \quad\mbox{for all } j\in\mb N\cup\{0\}.
 \end{align*}
On account of \eqref{eq14}, we have 
\begin{align*}
	b_0=\frac{3}{2}\de k\leq &2\de k -\frac{1}{2} \Big(\frac{r}{R}\Big)^\frac{s_1p}{p-1}T_{p-1}(u_-;x_0,R) -\frac{1}{2}\Big(\frac{r}{R}\Big)^\frac{s_2q}{q-1}T_{q-1}(u_-;x_0,R)-\frac{1}{2} r^\frac{s_1p-s_2q}{p-q} \\
	&-\frac{1}{2} \| f \|_{L^\ga(B_R)}^\frac{1}{p-1} r^\frac{\ga s_1p-N}{\ga(p-1)} ,
\end{align*}
 which implies that 
 \begin{align*}
 	\{ u <b_0\} \subset  &\left\{ u < 2\de k-\frac{1}{2} \Big(\frac{r}{R}\Big)^\frac{s_1p}{p-1}T_{p-1}(u_-;x_0,R) -\frac{1}{2}\Big(\frac{r}{R}\Big)^\frac{s_2q}{q-1}T_{q-1}(u_-;x_0,R)-\frac{1}{2} r^\frac{s_1p-s_2q}{p-q} \right.\\
 	&\left. \quad-\frac{1}{2} \| f \|_{L^\ga(B_R)}^\frac{1}{p-1} r^\frac{\ga s_1p-N}{\ga(p-1)} \right\}.
 \end{align*} 
Therefore, Lemma \ref{lem1} gives us
\begin{align}\label{eqlev0}
	\frac{|B_{6r} \cap \{ u <b_0 \}|}{|B_{6r}|}\leq \frac{\tl c}{\nu \log(\frac{1}{2\de})}.
\end{align}
Set $w_-\equiv w_j=(b_j-u)_+$ and $B_j:= B_{\rho_j}(x_0)$. Then, we have 
\[  w_j \geq (b_j-b_{j+1}) \chi_{\{u<b_{j+1}\}} \geq 2^{-j-3} b_j \chi_{\{u<b_{j+1}\}}. \]
 Let $\psi_j\in C_c^\infty(B_{\bar{\rho}_j})$ be such that $0\le \psi_j \leq 1$, $|\na\psi_j|\leq 2^{j+3}/r$ in $B_{\bar{\rho}_j}$ and $\psi_j\equiv 1$ in $B_{j+1}$. Using the fractional Poincar\'e inequality to the function $w_j\psi_j$, we get 
 \begin{align}\label{eq19}
 	(b_j-b_{j+1})^p \Big( \frac{|B_{j+1} \cap \{ u <b_{j+1} \}|}{|B_{j+1}|} \Big)^{p/p^*_{s_1}} &\leq \Big(\Xint-_{B_{j+1}} w_j^{p^*_{s_1}} \psi_j^{p^*_{s_1}}dx \Big)^{p/p^*_{s_1}} \nonumber\\
 	&\leq c\Big(\Xint-_{B_{j}} w_j^{p^*_{s_1}}\psi_j^{p^*_{s_1}}dx \Big)^{p/p^*_{s_1}} \nonumber\\
 	&\leq c r^{s_1p} \Xint-_{B_j}\int_{B_j} \frac{|w_j(x)\psi_j(x) - w_j(y)\psi_j(y)|^p}{|x-y|^{N+s_1p}}dxdy. 
 \end{align}
Now, we estimate the right hand side quantity of \eqref{eq19} with the help of \eqref{eq18}. For the first term in \eqref{eq18}, we see that 
\begin{align}\label{eq20}
	&\int_{B_j}\int_{B_j} \frac{|\psi_j(x)-\psi_j(y)|^\el}{|x-y|^{N+s\el}} \big(w_j(x)^p+w_j(y)^p\big)dxdy \nonumber\\
	&\leq cb_j^\el \| \na\psi_j\|_{L^\infty}^\el \int_{B_j}\int_{B_j\cap\{u<b_j\}}  |x-y|^{\el-N-s\el}dxdy \nonumber\\
	&\leq c 2^{j\el} b_j^{\el}r^{-s\el}|B_j\cap\{u<b_j\}|.
\end{align}
For the second term, we observe that ${\rm supp}\psi_j \subset B_{\bar{\rho}_j}$. Therefore, for all $y\in \mb R^N\setminus B_j$, we have
 \begin{align*}
 	\sup_{x\in {\rm supp}\psi_j} |x-y|^{-N-s\el} \leq c2^{j(N+s\el)}|y-x_0|^{N+s\el}.
 	\end{align*}
 This implies that 
 \begin{align*}
 	 \sup_{x\in {\rm supp}\psi_j} \int_{\mb R^N\setminus B_j} \frac{(b_j+u(y)_-)^{\el-1}}{|x-y|^{N+s\el}}dy 
 	 &\leq c 2^{j(N+s\el)} \int_{\mb R^N\setminus B_j} \frac{(b_j+u(y)_-)^{\el-1}}{|x_0-y|^{N+s\el}}dy \\
 	 &\leq c 2^{j(N+s\el)}\left[b_j^{\el-1} r^{-s\el}+ \int_{\mb R^N\setminus B_R} \frac{(u(y)_-)^{\el-1}}{|x_0-y|^{N+s\el}}dy\right]  \\
 	 &= c 2^{j(N+s\el)} r^{-s\el} \left[ b_j^{\el-1}+ \Big(\frac{r}{R}\Big)^{s\el} T_{l-1}(u_-;x_0,R)^{\el-1}\right],
 \end{align*}
where we have used the fact that $u\geq 0$ in $B_R$. For $(\el,s)=(p,s_1)$, we have $b_j>\de k\geq \big(\frac{r}{R}\big)^{s_1p/(p-1)} T_{p-1}(u_-;x_0,R)$ (thanks to \eqref{eq14}), which yields
 \begin{align}\label{eq21}
 	\sup_{x\in {\rm supp}\psi_j} \int_{\mb R^N\setminus B_j} \frac{(b_j+u(y)_-)^{p-1}}{|x-y|^{N+s_1p}}dy \leq  c 2^{j(N+s_1p)} r^{-s_1p} b_j^{p-1}.
 \end{align}
For $(\el,s)=(q,s_2)$, on account of \eqref{eq14}, we have $b_j>\de k\geq \big(\frac{r}{R}\big)^{s_2q/(q-1)} T_{q-1}(u_-;x_0,R)$. Thus,
\begin{align}\label{eq22}
	\sup_{x\in {\rm supp}\psi_j} \int_{\mb R^N\setminus B_j} \frac{(b_j+u(y)_-)^{q-1}}{|x-y|^{N+s_2q}}dy \leq c 2^{j(N+s_2q)} r^{-s_2q} b_j^{q-1}.
\end{align}
For the third term in \eqref{eq18}, using the relation $b_j>\de\geq  r^\frac{\ga s_1p-N}{\ga(p-1)}\| f \|_{L^\ga(B_R)}^\frac{1}{p-1}$,  we obtain 
\begin{align}\label{eq22f}
	b_j \| f \|_{L^\ga(B_R)} |B_j\cap\{u<b_j\}|^\frac{1}{\ga'} \leq c b_j \| f \|_{L^\ga(B_R)} r^\frac{-N}{\ga} |B_j\cap\{u<b_j\}| \leq c b_j^p r^{-s_1p} |B_j\cap\{u<b_j\}|.
\end{align}
  Therefore, using \eqref{eq20}, \eqref{eq21}, \eqref{eq22}, \eqref{eq22f} and \eqref{eq18} in \eqref{eq19}, we deduce that
 \begin{align*}
 	(b_j-b_{j+1})^p \Big( \frac{|B_{j+1} \cap \{ u <b_{j+1} \}|}{|B_{j+1}|} \Big)^\frac{p}{p^*_{s_1}} &\leq c 2^{j(N+p+ps_1)}  \frac{|B_j\cap\{u<b_j\}|}{|B_j|} \big( b_j^{p}+b_j^{q}r^{s_1p-s_2q} \big) \nonumber\\
 	&\leq c 2^{j(N+p+ps_1)}  b_j^p \frac{|B_j\cap\{u<b_j\}|}{|B_j|},
 \end{align*}
 where in the last line we have used the fact that $b_j>\de k\geq r^\frac{s_1p-s_2q}{p-q}$ (consequently, $b_j^{p} \geq r^{s_1p-s_2q}b_j^q$). Setting 
 \begin{align*}
  A_j:= \frac{|B_{j} \cap \{ u <b_{j} \}|}{|B_{j}|},
 \end{align*}
 and using the relation $b_j - b_{j+1} \geq 2^{-j-3}b_j$, the above inequality implies that
 \begin{align*}
 	A_{j+1}^{\frac{p}{p^*_{s_1}}} \leq c 2^{j(N+p+ps_1)} A_j.
 \end{align*}
 Taking into account \eqref{eqlev0}, a standard iteration lemma (see for instance \cite[Lemma 2.6 and proof of Lemma 3.2, pp. 1823-1824]{dicastroHarn}) yields
 \begin{align*}
 	\lim_{j\to\infty} A_j=0,
 \end{align*}
that is, $\inf_{B_{4r}} u \ge \de k$, and noting \eqref{eq14}, we get the required result of the lemma. \QED
\end{proof}	

\begin{Lemma}\label{lem3}
  Let  $u\in X^{s_1,p}_{g}(\Om,\Om') \cap X^{s_2,q}_{g}(\Om,\Om')$ be a weak super-solution to problem \eqref{probMain}. Let $R\in (0,1)$ and $u\geq 0 $ in $B_R(x_0)\subset\Om$. Then, there exist constants $\tau\in (0,1)$ and $c\geq 1$ (both depending only on $N,s_1,p,s_2,q$) such that, for all $r\in (0,R)$, 
  \begin{align*}
  	\left(\Xint-_{B_r}u^\tau dx \right)^{1/\tau} \leq c\inf_{B_{r}} u +c r^\frac{s_1p-s_2q}{p-q} +c  r^\frac{\ga s_1p-N}{\ga(p-1)} \| f \|_{L^\ga(B_R)}^\frac{1}{p-1}+ c\sum_{(\el,s)}\Big(\frac{r}{R}\Big)^\frac{s\el}{\el-1}T_{\el-1}(u_-;x_0,R). 
  \end{align*}	
\end{Lemma}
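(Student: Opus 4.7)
The plan is to combine the logarithmic estimate (Lemma \ref{lem1}) with the expansion of positivity (Lemma \ref{lem2}) and a Krylov--Safonov-type covering, in the spirit of \cite{dicastroHarn}, to obtain a polynomial-decay estimate for the super-level sets of $u$, and then integrate via Cavalieri's formula. Write
\[
m := \inf_{B_r} u + r^{\frac{s_1p-s_2q}{p-q}} + r^{\frac{\ga s_1p-N}{\ga(p-1)}} \|f\|_{L^\ga(B_R)}^{\frac{1}{p-1}} + \sum_{(\el,s)}\Big(\frac{r}{R}\Big)^{\frac{s\el}{\el-1}}T_{\el-1}(u_-;x_0,R),
\]
so that the claim reduces to $\big(\Xint-_{B_r} u^\tau\, dx\big)^{1/\tau} \leq c\, m$ for some small $\tau>0$ depending only on the structural parameters.

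First, I would establish a polynomial decay of the distribution function: there exist $\sigma > 0$ and $c \geq 1$ depending on $N,s_1,p,s_2,q$ such that
\[
|\{u > k\}\cap B_r| \leq c\,(m/k)^{\sigma}\,|B_r| \qquad\text{for every }k > 0.
\]
To achieve this, Lemma \ref{lem2} provides the basic dichotomy: whenever a sub-ball $B_\rho\subset B_r$ has $|\{u \geq k\}\cap B_\rho| \geq \nu |B_\rho|$ for a universal density $\nu$, one obtains $\inf_{B_{4\rho}} u \geq \de k - c\,\mathcal{T}(\rho)$, where $\mathcal{T}(\rho)$ collects the lower-order scales entering $m$ (all monotone in $\rho$). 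Since $\inf_{B_r} u \leq m$, this forces $k \leq c\,m$ in the bad alternative, yielding the crude bound $|\{u\geq k\}\cap B_r| < \nu|B_r|$ for $k > c m$. Applying the same dichotomy recursively on dyadic subcubes of $B_r$, with Lemma \ref{lem1} used with $\de = 2^{-n-1}$ to give a quantitative density improvement at each step, promotes this weak bound into the claimed polynomial decay.

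Second, Cavalieri's formula gives, for $\tau \in (0,\sigma)$,
\[
\Xint-_{B_r} u^\tau\, dx = \frac{\tau}{|B_r|}\int_0^\infty k^{\tau-1}|\{u>k\}\cap B_r|\,dk \leq (cm)^\tau + c\,m^\sigma\!\int_{cm}^\infty k^{\tau-1-\sigma}\,dk \leq c(\tau,\sigma)\,m^\tau,
\]
and raising to the power $1/\tau$ yields the lemma.

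\textbf{Main obstacle.} The delicate step is the covering-iteration in Step 1, required to convert the merely logarithmic bound of Lemma \ref{lem1} into a polynomial decay in $k/m$. The extra layer relative to the homogeneous setting of \cite{dicastroHarn} is that $\mathcal{T}(\rho) := \rho^{(s_1p-s_2q)/(p-q)} + \rho^{(\ga s_1p-N)/(\ga(p-1))}\|f\|_{L^\ga(B_R)}^{1/(p-1)} + \sum_{(\el,s)}(\rho/R)^{s\el/(\el-1)}T_{\el-1}(u_-;x_0,R)$ involves four distinct exponents: two tail exponents $s_1p/(p-1)$ and $s_2q/(q-1)$, the source exponent, and the new non-homogeneous scale $(s_1p-s_2q)/(p-q)$. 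Positivity of all four (under the standing hypothesis on $\ga$) guarantees $\mathcal{T}(\rho)\leq \mathcal{T}(r)$ for $\rho\leq r$, which keeps the iteration consistent, but the bookkeeping is delicate and must be carried out in the same spirit as in the proofs of Lemmas \ref{lem1} and \ref{lem2}.
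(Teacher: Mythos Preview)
Your proposal is correct and follows essentially the same route as the paper: the paper's proof simply refers to \cite[Proof of Lemma 4.1]{dicastroHarn}, replacing the tail term there by the combined quantity $T=\mathcal T(r)$ you call $m-\inf_{B_r}u$, and invoking Lemma~\ref{lem2} in place of \cite[Lemma 3.2]{dicastroHarn}; this is precisely the Krylov--Safonov covering plus Cavalieri scheme you outline. One small inaccuracy: in the iteration you do not apply Lemma~\ref{lem1} with $\de=2^{-n-1}$ directly---the covering argument uses only the expansion of positivity (Lemma~\ref{lem2}, with its fixed $\de$) at each step, while Lemma~\ref{lem1} is already absorbed into the proof of Lemma~\ref{lem2}. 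Your key observation that all four exponents in $\mathcal T(\rho)$ are positive, so that $\mathcal T(\rho)\le \mathcal T(r)$ for $\rho\le r$, is exactly what makes the dicastroHarn argument go through unchanged with the augmented $T$.
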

\begin{proof}
 The proof of the lemma follows exactly on the similar lines of \cite[Proof of Lemma 4.1]{dicastroHarn} by taking $T:=\Big(\frac{r}{R}\Big)^\frac{s_1p}{p-1}T_{p-1}(u_-;x_0,R) +\Big(\frac{r}{R}\Big)^\frac{s_2q}{q-1}T_{q-1}(u_-;x_0,R)+ r^\frac{s_1p-s_2q}{p-q}+  r^\frac{\ga s_1p-N}{\ga(p-1)} \| f \|_{L^\ga(B_R)}^\frac{1}{p-1}$, there and using Lemma \ref{lem2} instead of \cite[Lemma 3.2]{dicastroHarn}. \QED
\end{proof}

\begin{Proposition}[Local boundedness]\label{localbdd}
 Let  $u\in X^{s_1,p}_{g}(\Om,\Om') \cap X^{s_2,q}_{g}(\Om,\Om')$ be a weak solution to problem \eqref{probMain}. Let $r\in (0,1)$ and $x_0\in\mb R^N$ be such that $B_r(x_0)\Subset\Om$. Then,   there exists a constant $C>0$ depending only on $N,s_1,p,s_2,q$ and $\ga$ (if $\ga<\infty$) such that, for all $\vep\in (0,1]$:
	 \begin{align*}
		\sup_{B_{r/2}} u_\pm &\leq  \vep T_{p-1}(u_\pm;x_0,\frac{r}{2})+ \vep T_{q-1}(u_\pm;x_0,\frac{r}{2})+ \vep r^\frac{s_1p-s_2q}{p-q} \nonumber\\
		&\quad+   \vep^\frac{p-\theta}{p} r^{\frac{\sg}{p\sg-1}(s_1p-N+\frac{N}{\sg})} \|f\|_{L^\ga(B_r)}^\frac{1}{p-1} + C\vep^\frac{(1-p)p^*_{s_1}}{p(p^*_{s_1}-p\sg)} \left(\Xint-_{B_{r}}u_\pm(x)^{p\sg}dx\right)^{1/(p\sg)},
	\end{align*}
 where  $\theta=\frac{(p^*_{s_1})'}{\ga}$ and $\sg=\frac{p-\theta}{p(1-\theta)}$.
\end{Proposition}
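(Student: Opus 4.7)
The plan is a De Giorgi iteration on the positive part $u_+$ (the bound on $u_-$ follows verbatim after replacing $u,f$ with $-u,-f$, since $-u$ solves the same equation with $-f$ on the right). Fix a target height $d>0$ to be selected at the end. For $j\geq 0$, set
\begin{align*}
r_j=\tfrac r2+\tfrac r{2^{j+1}},\quad \tl r_j=\tfrac{r_j+r_{j+1}}{2},\quad k_j=d(1-2^{-j}),\quad w_j:=(u-k_j)_+,
\end{align*}
and pick cutoffs $\phi_j\in C_c^\infty(B_{\tl r_j}(x_0))$ with $\phi_j\equiv 1$ on $B_{r_{j+1}}(x_0)$ and $|\na\phi_j|\leq C 2^j/r$. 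Using $w_j\phi_j^p$ as a test function and applying the Caccioppoli-type estimate \cite[Lemma 3.1]{JDS2}, the $p$-Gagliardo seminorm of $w_j\phi_j$ on $B_{r_j}$ is dominated by the sum of: (i) a local $p$-term $\lesssim (2^j/r)^{s_1p}\int_{B_{r_j}} w_j^p$; (ii) a local $q$-term $\lesssim (2^j/r)^{s_2q}\int_{B_{r_j}} w_j^q$; (iii) nonlocal tail contributions for each $\el\in\{p,q\}$ of the form $2^{j(N+s\el)}\Big(\sup_{x\in\mathrm{supp}\,\phi_j}\int_{\mb R^N\setminus B_{r_j}}w_j(y)^{\el-1}|x-y|^{-N-s\el}dy\Big)\int_{B_{r_j}}w_j\phi_j^p$; and (iv) the forcing $\int_{B_r}|f|w_j\phi_j^p$. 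The fractional Sobolev embedding then promotes the left-hand side to $\|w_j\|_{L^{p_{s_1}^*}(B_{r_{j+1}})}^p$, while $w_j\geq d 2^{-j-1}\chi_{\{u>k_{j+1}\}}$ and $w_{j+1}\leq w_j$ convert the whole inequality into a recursion between level-set integrals.

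I iterate on $A_j:=\Xint-_{B_{r_j}}w_j^{p\sg}\,dx$. The bulk integrands $\int_{B_{r_j}}w_j^\el$ are bounded via H\"older by $\|w_j\|_{L^{p\sg}(B_{r_j})}^\el|\{w_j>0\}\cap B_{r_j}|^{1-\el/(p\sg)}$ together with the Chebyshev bound $|\{w_j>0\}\cap B_{r_j}|\leq (2^j/d)^{p\sg}|B_{r_j}|A_j$, producing positive powers of $A_j$ and negative powers of $d$. The two nonlocal tails split over $B_r\setminus B_{r_j}$ (absorbed by the same H\"older/Chebyshev trick, since $u_+\leq w_j+d$ there) and $\mb R^N\setminus B_r$, the latter yielding the prescribed $T_{p-1}(u_+;x_0,r/2)$ and $T_{q-1}(u_+;x_0,r/2)$ weighted by $2^{j(N+s\el)}r^{-s\el}$. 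The forcing is controlled by $\int|f|w_j\leq \|f\|_{L^\ga(B_r)}\|w_j\|_{L^{\ga'}(B_r)}$ and then interpolation
\begin{align*}
 \|w_j\|_{L^{\ga'}(B_r)}\leq \|w_j\|_{L^{p\sg}(B_r)}^{1-\theta}\|w_j\|_{L^{p_{s_1}^*}(B_r)}^{\theta}
\end{align*}
with $\theta=(p_{s_1}^*)'/\ga$; the exponent $\sg=(p-\theta)/(p(1-\theta))$ appearing in the statement is precisely what makes these weights consistent with $L^{\ga'}$. The $L^{p_{s_1}^*}$-factor is then absorbed into the left-hand side via Young's inequality (legitimate because $\theta<1$ under the hypothesis $\ga>N/(s_1p)$), and the residual inequality has the form $A_{j+1}\leq Cb^j d^{-\mu}A_j^{1+\kappa}$ for some $b>1$ and $\mu,\kappa>0$. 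A standard geometric-iteration lemma then yields $A_j\to 0$, which is equivalent to $u\leq d$ on $B_{r/2}$, provided $d$ exceeds each of the five summands on the right of the statement. Tracking the $\vep$-contribution to each summand yields the precise powers of $\vep$ displayed in the conclusion.

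The principal obstacle is the non-homogeneity of the operator: the $q$-contributions in both the Caccioppoli bound and the $\el=q$ tail carry the scaling $r^{s_2q}$ and the exponent $q-1$ instead of $r^{s_1p}$ and $p-1$. After the Sobolev embedding one is therefore left with an extra factor $d^{q-p}r^{s_1p-s_2q}$ multiplying certain $A_j$-powers, which would ruin the $r$-uniformity of the iteration. The summand $\vep r^{(s_1p-s_2q)/(p-q)}$ in the statement is tailored exactly to force $d^{p-q}\geq \vep^{p-q}r^{s_1p-s_2q}$, so that the offending factor is under control. The second subtlety is the finite-$\ga$ right-hand side: a plain Moser iteration on $\|w_j\|_{L^p}$ is inadequate in the non-homogeneous setting, and one must iterate instead on $\|w_j\|_{L^{p\sg}}$ with $\sg$ chosen precisely as above, so that the interpolation step holds and the $L^{p_{s_1}^*}$--piece of the $f$-term can be absorbed into the left-hand side rather than participating in the iteration.
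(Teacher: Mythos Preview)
Your overall architecture is exactly the paper's: a De Giorgi iteration on $\|w_j\|_{L^{p\sg}}$ driven by the Caccioppoli estimate of \cite[Lemma~3.1]{JDS2}, with the height $d$ (the paper's $\tilde k$) chosen to absorb the two tails, the cross term $r^{(s_1p-s_2q)/(p-q)}$, and the $f$-contribution. The tail and local estimates you describe are those of the paper (its (3.14)--(3.15)), and your explanation of why the summand $\vep r^{(s_1p-s_2q)/(p-q)}$ is needed is precisely the mechanism the paper exploits.

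There is, however, a genuine gap in your treatment of the forcing term. You write
\[
\|w_j\|_{L^{\ga'}}\leq \|w_j\|_{L^{p\sg}}^{1-\theta}\,\|w_j\|_{L^{p^*_{s_1}}}^{\theta},
\qquad \theta=\frac{(p^*_{s_1})'}{\ga},
\]
but this interpolation is not available: under the hypothesis $\ga>N/(ps_1)$ one typically has $\ga'<p\sg$ (e.g.\ $N=3$, $s_1=\tfrac12$, $p=2$, $\ga=4$ gives $\ga'=\tfrac43$ while $p\sg=\tfrac{13}{5}$), so $\ga'$ does not lie between $p\sg$ and $p^*_{s_1}$ and no such inequality with exponent $\theta$ holds. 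The paper instead interpolates $L^{\ga'}$ between $L^{1}$ and $L^{p^*_{s_1}}$ (this is what the value $\theta=(p^*_{s_1})'/\ga$ actually encodes), applies Young's inequality with exponents $p/\theta$ and $p/(p-\theta)$ to absorb the $L^{p^*_{s_1}}$-factor into the left-hand side, and only then converts the remaining $L^{1}$-norm into an $L^{p\sg}$-quantity via the pointwise level-set inequality
\[
\tilde w_j \leq (\tilde k_j-k_j)^{1-p\sg}\,w_j^{\,p\sg},
\]
which produces the factor $\tilde k^{\,-(p-1/\sg)}$ and explains why $\sg=(p-\theta)/(p(1-\theta))$ is the right exponent. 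With this correction your iteration goes through exactly as in the paper.
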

\begin{proof}
  We fix $\tl k\in\mb R^+$, $k\in\mb R$ and define the following: 
\begin{align*}
	r_i= (1+2^{-i})\frac{r}{2}, \ \ \tl r_i=\frac{r_i+r_{i+1}}{2};  	 \ \ k_i=k+(1-2^{-i})\tl k, \ \ \tl k_i= \frac{k_{i}+k_{i+1}}{2} \quad\mbox{for all }i\in\mb N.  
\end{align*}
Clearly, $r_{i+1}\leq \tl r_i \leq r_i$ and $k_i\leq \tl k_i$. Further,  we set
\begin{align*}
	&B_i= B_{r_i}(x_0), \ \ \tl B_{i}= B_{\tl r_i}(x_0); \quad \tl w_i:=  (u-\tl k_i)_+ \quad\mbox{and } w_i:= (u-k_i)_+,\\
	&  \quad \psi_i\in C^\infty_c(\tl B_i), \ 0\leq \psi_i \leq 1, \ \psi_i\equiv 1 \mbox{ in }B_{i+1}, \ \ |\na\psi_i| < 2^{i+3}/r. 
\end{align*}
Applying the fractional Poincar\'e inequality to the function $\tl w_i\psi_i$ (for the case $ps_1<N$, otherwise taking an arbitrarily large number in place of $p^*_{s_1})$, we have
\begin{align}\label{eq42}
	\left(  \Xint-_{B_i} |\tl w_i\psi_i|^{p^*_{s_1}} dx \right)^{p/p^*_{s_1}} 
	\leq c\frac{r^{s_1p}}{r^N}\int_{B_i}\int_{B_i} \frac{|\tl w_i(x)\psi_i(x)-\tl w_i(y)\psi_i(y)|^p}{|x-y|^{N+ps_1}}dxdy+   \Xint-_{B_i} |\tl w_i\psi_i|^pdx.
\end{align}
The first term on the right hand side is estimated by means of the following Caccioppoli type inequality (see \cite[Lemma 3.1]{JDS2}) 
\begin{align}\label{eq43}
 & \int_{B_i}\int_{B_i} \frac{|\tl w_i(x)\psi_i(x)-\tl w_i(y)\psi_i(y)|^p}{|x-y|^{N+ps_1}}dxdy \nonumber\\
 &\leq  C \sum_{(\el,s)} \int_{B_i}\int_{B_i} \frac{|\psi_i(x)-\psi_i(y)|^\el}{|x-y|^{N+s\el}}(\tl w_i(x)^\el+\tl w_i(y)^\el)dxdy \nonumber\\
 &\quad+ C   \sum_{(\el,s)} \Big(\sup_{y\in {\rm supp}\psi_i} \int_{\mb R^N\setminus B_i} \frac{\tl w_i(x)^{\el-1}dx}{|x-y|^{N+s\el}}\Big) \int_{B_i}\tl w_i\psi_i^p+  \int_{\Om} |f| \tl w_i\psi_i^p,
\end{align}
where $C=C(p,q)>0$ is a constant. Moreover, from the proof of \cite[Proposition 3.2]{JDS2}, we have 
  \begin{align}\label{eq44}
 	  \int_{B_i}\int_{B_i} \frac{|\psi_i(x)-\psi_i(y)|^\el}{|x-y|^{N+s\el}}(\tl w_i(x)^\el+ \tl w_i(y)^\el)dxdy 
 	  &\leq c \frac{2^{ip}}{\el(1-s)}\frac{1}{\tl k^{p-\el}}\frac{r^N}{r^{s\el}} \Xint-_{B_i}w_i^p(x)dx
 	\end{align}
and 
 \begin{align}\label{eq45}
 	\Big(\sup_{x\in {\rm supp}\psi_i} &\int_{\mb R^N\setminus B_i} \frac{\tl w_i(y)^{\el-1}dy}{|x-y|^{N+\el s}}\Big) \int_{B_i}\tl w_i(x)\psi_i^p(x)dx \nonumber\\
 	&\leq c\frac{2^{i(N+s\el+p-1)}}{\tl k^{p-1}} \frac{r^N}{r^{s\el}} T_{l-1}(w_0;x_0,r/2)^{\el-1} \Xint-_{B_i} w_i^p(x)dx.
 \end{align}
Now, we estimate the integral involving $f$. First, we note that if $\ga=\infty$, then 
 \begin{align*}
 	\int_{\Om} |f(x)| \tl w_i(x)\psi_i^p(x) dx \leq c \| f \|_{L^\infty(B_r)} r^N \Xint-_{B_i} w_i^p(x)dx.
 \end{align*}
Therefore, we assume that $\ga <\infty$ (consequently, $\ga'>1$). Applying H\"older's inequality and interpolation identity for $L^p$-spaces (with $1$ and $p^*_{s_1}$), we get 
\begin{align*}
	\int_{\Om}|f(x)|\tl w_i(x)\psi_i^p(x)dx &\leq  \| f \psi_i^{p-1} \|_{L^\ga(B_r)} \| \tl w_i\psi_i \|_{L^{\ga'}(B_i)} \\
	&\leq  \| f \|_{L^\ga(B_r)} \| \tl w_i\psi_i \|_{L^{p^*_{s_1}}(B_i)}^{\theta} \| \tl w_i\psi_i \|_{L^{1}(B_i)}^{1-\theta},
\end{align*}
where $\theta =\frac{(p^*_{s_1})'}{\ga} \in (0,1)$ (thanks to the assumption on $\ga$).
Using Young's inequality in the above expression (with the exponents $p/\theta$ and $p/(p-\theta)$), for $\e>0$, we deduce that
\begin{align}\label{eq47}
	\int_{\Om}|f(x)|\tl w_i(x)\psi_i^p(x)dx \leq \e  \| f \|_{L^\ga(B_r)}  \| \tl w_i\psi_i \|_{L^{p^*_{s_1}}(B_i)}^{p} + \e^{\frac{-\theta}{p-\theta}} \| f \|_{L^\ga(B_r)} \| \tl w_i\psi_i \|_{L^{1}(B_i)}^{\frac{p(1-\theta)}{p-\theta}}.
\end{align}
For convenience in writing, we denote $\sg=\frac{p-\theta}{p(1-\theta)}>1$, and observe that $p\sg\in (1,p^*_{s_1})$ (this is because of $\ga>N/(ps_1)$). Thus on account of the relation $\tl w_i\leq (\tl k_i - k_i)^{1-p\sg} w_i^{p\sg}$, we get 
 \begin{align*}
 	\Big(\int_{B_i} | \tl w_i\psi_i |dx\Big)^{1/\sg} \leq \Big( \frac{1}{(\tl k_i - k_i)^{p\sg-1}} \int_{B_i}  (\tl w_i)^{p\sg}dx\Big)^{1/\sg} \leq \frac{1}{\tl k^{p-1/\sg}} \Big(\int_{B_i} (\tl w_i)^{p\sg}dx\Big)^{1/\sg}.
 \end{align*}
Taking $\e= \frac{1}{2cC \| f \|_{L^\ga(B_r)}}$ in the above expression, where $c$ and $C$ are as in \eqref{eq42} and \eqref{eq43}, respectively, from \eqref{eq47}, we get 
\begin{align}\label{eq48}
	\int_{\Om}|f(x)|\tl w_i(x)\psi_i^p(x)dx \leq \frac{1}{2cC} r^{N-s_1p} \Big(\Xint-_{B_i} |\tl w_i\psi_i |^{p^*_{s_1}}\Big)^\frac{p}{p^*_{s_1}} + C_1  \frac{\| f \|_{L^\ga(B_r)}^\frac{p}{p-\theta} }{\tl k^{p-1/\sg}} r^\frac{N}{\sg} \Big(\Xint-_{B_i} (\tl w_i)^{p\sg}dx\Big)^\frac{1}{\sg}, 
\end{align} 
where $C_1=C_1(c,C,N,\sg)>0$ is a constant. Therefore, combining \eqref{eq44}, \eqref{eq45} and \eqref{eq48} with \eqref{eq43}, from \eqref{eq42}, we obtain  
  \begin{align*}
	\left(\Xint-_{B_i} |\tl w_i\psi_i|^{p^*_{s_1}}dx\right)^\frac{p}{p^*_{s_1}} \leq & c 2^{i(N+s_1p+p-1)} \left[ \frac{1}{p(1-s_1)}+ \frac{\sum_{(\el,s)} r^{s_1p-s\el}T_{\el-1}(w_0;x_0,\frac{r}{2})^{\el-1}}{\tl k^{p-1}} \right.\nonumber\\
	&\left. +\frac{r^{s_1p-s_2q}}{q(1-s_2)\tl k^{p-q}}+r^{s_1p-N+\frac{N}{\sg}} \frac{\| f \|_{L^\ga(B_r)}^\frac{p}{p-\theta} }{\tl k^\frac{p(p-1)}{p-\theta}}  +1 \right] 
    \Big(\Xint-_{B_i} (w_i)^{p\sg}dx\Big)^\frac{1}{\sg},
	\end{align*}
 where we have used H\"older's inequality in the integrals involving $w_i^p$.	
 For $\vep\in (0,1]$, we take 
  \begin{align}\label{eq49}
		\tl k \geq  \vep T_{p-1}(w_0;x_0,\frac{r}{2})+ \vep T_{q-1}(w_0;x_0,\frac{r}{2})+ \vep r^\frac{s_1p-s_2q}{p-q}+  \|f\|_{L^\ga(B_r)}^\frac{1}{p-1} \vep^\frac{p-\theta}{p} r^{\frac{p-\theta}{p(p-1)}(s_1p-N+\frac{N}{\sg})},
	\end{align}
 (note that $s_1p-N+\frac{N}{\sg}>0$). Thus, the above expression yields
 \begin{align}\label{eq50}
 	\left(\Xint-_{B_i} |\tl w_i\psi_i|^{p^*_{s_1}}dx\right)^\frac{p}{p^*_{s_1}} \leq & c 2^{i(N+s_1p+p-1)} \frac{1}{\vep^{p-1}} \Big(\Xint-_{B_i} (w_i)^{p\sg} dx\Big)^\frac{1}{\sg},
 \end{align}
 where we have used the relation
	 $\big(r^{s_1p-s_2q} \tl k^{q-p}\big) \big(\tl k^{1-q} T_{q-1}(w_0;x_0.r/2)^{q-1} \big) \leq \vep^{1-p}$. 
Now, we estimate the left hand side term as \cite[Proof of Theorem 1.1, p. 1292]{dicastro}:
\begin{align*}
	\left(\Xint-_{B_i} |\tl w_i\psi_i|^{p^*_{s_1}}dx\right)^\frac{p}{p^*_{s_1}} &\geq (k_{i+1}-\tl k_i)^\frac{(p^*_{s_1}-p\sg)p}{p^*_{s_1}} \Big(\Xint-_{B_{i+1}}  (w_{i+1})^{p\sg}dx\Big)^{p/p^*_{s_1}} \\
	&= \Big(\frac{\tl k}{2^{i+2}}\Big)^\frac{(p^*_{s_1}-p\sg)p}{p^*_{s_1}} \Big(\Xint-_{B_{i+1}}  (w_{i+1})^{p\sg}dx\Big)^{p/p^*_{s_1}}.
\end{align*}
 Then, setting $A_i:=\big(\Xint-_{B_i} w_i^{p\sg}(x)dx\big)^{1/(p\sg)}$, from \eqref{eq50}, we have
 \begin{align*}
\tl k^\frac{(p^*_{s_1}-p\sg)p}{p^*_{s_1}} A_{i+1}^\frac{p^2\sg}{p^*_{s_1}} \leq c 2^{i\big(N+s_1p+p-1+\frac{(p^*_{s_1}-p\sg)p}{p^*_{s_1}}\big)} \vep^{1-p} A_i^p,	
 \end{align*}
 that is,
 \begin{align*}
 	\frac{A_{i+1}}{\tl k}  \leq c \vep^{\frac{p^*_{s_1}(1-p)}{p^2\sg}} C_2^{i} \left(\frac{A_{i}}{\tl k} \right)^{1+\vartheta},
 \end{align*}
 where $\vartheta=\frac{p^*_{s_1}}{p\sg}-1>0$ and $C_2 := 2^{\big(\frac{N+s_1p+p-1}{p}+\frac{p^*_{s_1}-p\sg}{p^*_{s_1}}\big)\frac{p^*_{s_1}}{p\sg}}>1$. We note that $\frac{p^*_{s_1}(p-1)}{p^2\sg\vartheta}= \frac{p-1}{p}\frac{p^*_{s_1}}{p^*_{s_1}-p\sg}$ and $\frac{p-\theta}{p(p-1)}=\frac{\sg}{p\sg-1}$. Next, we choose
 \begin{align*}
 	\tl k &= \vep T_{p-1}(w_0;x_0,\frac{r}{2})+ \vep T_{q-1}(w_0;x_0,\frac{r}{2})+ \vep r^\frac{s_1p-s_2q}{p-q}+  \|f\|_{L^\ga(B_r)}^\frac{1}{p-1} \vep^\frac{p-\theta}{p} r^{\frac{\sg}{p\sg-1}(s_1p-N+\frac{N}{\sg})} \nonumber\\
 	&\quad+ \vep^\frac{(1-p)p^*_{s_1}}{p(p^*_{s_1}-p\sg)} c^{1/\vartheta} C_2^{1/\vartheta^2} A_0,
 \end{align*}
which clearly satisfies \eqref{eq49} and 
\begin{align*}
\frac{A_{0}}{\tl k} \leq \vep^{\frac{p^*_{s_1}(p-1)}{p^2\sg\vartheta}} c^{-1/\vartheta} C_2^{-1/\vartheta^2}.  	
\end{align*}
Therefore, a well known iteration argument implies that
 \begin{align*}
		A_i \to 0 \quad\mbox{as }i\to\infty.
	\end{align*}
 Thus, we obtain
 \begin{align*}
 	 \sup_{B_{r/2}} (u-k)_+ \leq \tl k& = \vep T_{p-1}((u-k)_+;x_0,\frac{r}{2})+ \vep T_{q-1}((u-k)_+;x_0,\frac{r}{2})+ \vep r^\frac{s_1p-s_2q}{p-q} \nonumber\\
 	 &+   \vep^\frac{p-\theta}{p} r^{\frac{\sg}{p\sg-1}(s_1p-N+\frac{N}{\sg})} \|f\|_{L^\ga(B_r)}^\frac{1}{p-1} + C\vep^\frac{(1-p)p^*_{s_1}}{p(p^*_{s_1}-p\sg)} \left(\Xint-_{B_{r}}(u-k)_+^{p\sg}\right)^{1/(p\sg)}.
 \end{align*}
 Now, the proof of the proposition, for $u_+$, follows by taking $k=0$. The proof for the case $u_-$ runs analogously.\QED
\end{proof}
By slightly modifying the choice of $\tl k$ in the above proof, we have:
\begin{Corollary}\label{corlocalbd}
	Under the hypothesis of Proposition \ref{localbdd}, the following holds: 
	\begin{align*}
		\sup_{B_{r/2}} u_+ \leq & \vep T_{p-1}(u_+;x_0,\frac{r}{2})+ \vep r^\frac{s_1p-s_2q}{p-1} T_{q-1}(u_+;x_0,\frac{r}{2})^\frac{q-1}{p-1}+ \vep r^\frac{s_1p-s_2q}{p-q} \nonumber\\
		&+   \vep^\frac{p-\theta}{p} r^{\frac{\sg}{p\sg-1}(s_1p-N+\frac{N}{\sg})} \|f\|_{L^\ga(B_r)}^\frac{1}{p-1} + C\vep^\frac{(1-p)p^*_{s_1}}{p(p^*_{s_1}-p\sg)} \left(\Xint-_{B_{r}}u_+^{p\sg}dx\right)^{1/(p\sg)},
	\end{align*}
	where  $C=C(N,p,q,s_1,s_2,\ga)>0$ is a constant.
\end{Corollary}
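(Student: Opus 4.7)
The plan is to repeat the proof of Proposition~\ref{localbdd} almost verbatim, adjusting only the choice of the threshold $\tl k$ that enters through~\eqref{eq49}. Inspecting that argument, the summand $\vep T_{q-1}(w_0;x_0,\frac{r}{2})$ in \eqref{eq49} is used solely to control the $(\el,s)=(q,s_2)$ tail contribution
\[
\frac{r^{s_1p-s_2q}\,T_{q-1}(w_0;x_0,\frac{r}{2})^{q-1}}{\tl k^{p-1}}
\]
coming from \eqref{eq45}, which the original proof bounds by $\vep^{1-p}$ through the factorization $(r^{s_1p-s_2q}\tl k^{q-p})(\tl k^{1-q}T_{q-1}^{q-1})$, requiring the two separate lower bounds $\tl k\ge\vep r^{(s_1p-s_2q)/(p-q)}$ and $\tl k\ge\vep T_{q-1}(w_0;x_0,\frac{r}{2})$. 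The key observation is that the single condition
\[
\tl k\ \ge\ \vep\,r^{(s_1p-s_2q)/(p-1)}\,T_{q-1}(w_0;x_0,\tfrac{r}{2})^{(q-1)/(p-1)}
\]
is equivalent to $\tl k^{p-1}\ge\vep^{p-1}r^{s_1p-s_2q}T_{q-1}^{q-1}$, and hence dominates the above tail contribution by $\vep^{1-p}$ on its own, without any reference to the scale $r^{(s_1p-s_2q)/(p-q)}$.

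Accordingly, I would carry out the proof of Proposition~\ref{localbdd} unchanged up to and including the estimates \eqref{eq44}--\eqref{eq48}, and then replace the prescription \eqref{eq49} by the modified requirement
\[
\tl k\ \ge\ \vep T_{p-1}(w_0;x_0,\tfrac{r}{2}) + \vep r^{(s_1p-s_2q)/(p-1)}T_{q-1}(w_0;x_0,\tfrac{r}{2})^{(q-1)/(p-1)} + \vep r^{(s_1p-s_2q)/(p-q)} + \vep^{(p-\theta)/p}r^{\frac{\sg}{p\sg-1}(s_1p-N+N/\sg)}\|f\|_{L^\ga(B_r)}^{1/(p-1)}.
\]
The third summand has to be retained because it is precisely what controls $r^{s_1p-s_2q}/(q(1-s_2)\tl k^{p-q})$ in the bracket preceding \eqref{eq50}, originating from the Caccioppoli term \eqref{eq44} for $(\el,s)=(q,s_2)$, which is insensitive to the change above. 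With this choice, every term inside that bracket is still bounded by a constant multiple of $\vep^{1-p}$, so \eqref{eq50} holds as before. The rest of the argument---augmenting $\tl k$ by the extra summand $\vep^{(1-p)p^*_{s_1}/(p(p^*_{s_1}-p\sg))}c^{1/\vartheta}C_2^{1/\vartheta^2}A_0$, running the Moser-type iteration $A_{i+1}^{p/p^*_{s_1}}\le c\,2^{i(\dots)}\vep^{1-p}A_i$, and passing $A_i\to 0$ followed by taking $k=0$---transfers verbatim and yields the claimed bound for $u_+$.

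No substantial obstacle arises, since the technical apparatus of Proposition~\ref{localbdd} is reused wholesale. The only point requiring care is the bookkeeping of exponents in the single new inequality used for the $T_{q-1}$ tail, namely the equivalence between $\tl k\ge\vep r^{(s_1p-s_2q)/(p-1)}T_{q-1}^{(q-1)/(p-1)}$ and $\tl k^{p-1}\ge\vep^{p-1}r^{s_1p-s_2q}T_{q-1}^{q-1}$, which has been verified above.
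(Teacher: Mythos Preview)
Your proposal is correct and matches the paper's approach: the paper itself only says ``By slightly modifying the choice of $\tl k$ in the above proof,'' and you have correctly identified that the modification consists in replacing the summand $\vep T_{q-1}(w_0;x_0,r/2)$ in \eqref{eq49} by $\vep r^{(s_1p-s_2q)/(p-1)}T_{q-1}(w_0;x_0,r/2)^{(q-1)/(p-1)}$, which directly yields $r^{s_1p-s_2q}T_{q-1}^{q-1}/\tl k^{p-1}\le\vep^{1-p}$ without the factorization used previously. Your observation that the term $\vep r^{(s_1p-s_2q)/(p-q)}$ must still be retained to handle the Caccioppoli contribution $r^{s_1p-s_2q}/\tl k^{p-q}$ is also correct, and the remainder of the iteration carries over unchanged.
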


Next, we prove the following version of Caccioppoli type inequality.
\begin{Lemma}\label{lem4}
 Let $m\in (1,p)$ and $\la>0$. Let  $u\in X^{s_1,p}_{g}(\Om,\Om') \cap X^{s_2,q}_{g}(\Om,\Om')$ be a weak super-solution to problem \eqref{probMain} such that $u\geq 0 $ in $B_R(x_0)\subset\Om$, for $R\in (0,1)$. Then, for any $r\in (0,3R/4)$ and $\phi\in C_c^\infty(B_r)$, with $0\leq \phi\leq 1$, there holds
\begin{align*}
 	&\int_{B_r} \int_{B_r} \frac{|w_p(x)\phi(x)-w_p(y)\phi(y)|^p}{|x-y|^{N+s_1p}}dxdy\\
 &\leq  \int_{\Om}|f(x)|\bar u^{1-m}(x)\phi^p(x)dx +
 c \sum_{(\el,s)}\la^{\el-p} \int_{ B_{r}}\int_{ B_{r}}\frac{|\phi(x)-\phi(y)|^\el}{|x-y|^{N+s\el}}(w_p(x)^p+w_p(y)^p)dxdy \nonumber\\
 &\quad+ c \sum_{(\el,s)}\Big( \sup_{x\in {\rm supp}\phi} \int_{{\mb R^N}\setminus B_{r}} \frac{dy}{|x-y|^{N+s\el}} + \la^{1-\el} \int_{{\mb R^N}\setminus B_{R}} \frac{u(y)^{\el-1}_-dy}{|y-x_0|^{N+s\el}}\Big) 
\int_{B_r} w_\el(x)^\el\phi^p(x)dx ,
 \end{align*}
where $w_\el:=(u+\la)^\frac{\el-m}{\el} $, and $c=c(p,q,m)>0$ is a constant.
\end{Lemma}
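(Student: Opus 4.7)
The plan is to test the weak supersolution inequality \eqref{solndef} against $\psi:=\phi^p\bar u^{1-m}$, where $\bar u:=u+\la$. This is an admissible non-negative test function since $\phi\in C_c^\infty(B_r)$, $\bar u\geq\la>0$, and $\mathrm{supp}\,\psi\Subset B_r\Subset\Om$. As in the proof of Lemma \ref{lem1}, I split the resulting inequality as
\[
I_1(p)+I_1(q)+I_2(p)+I_2(q)-I_3(f)\geq 0,
\]
where $I_1(\el)$ denotes the double integral over $B_r\times B_r$, $I_2(\el)$ the one over $(\mb R^N\setminus B_r)\times B_r$, and $I_3(f)=\int_\Om f\bar u^{1-m}\phi^p\,dx$. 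The lemma will follow after bounding each piece and transferring the negative $|w_p\phi|^p$-term produced by $I_1(p)$ to the left-hand side.

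For $I_1(p)$, with $m\in(1,p)$ and $u\geq0$ on $B_r$, the standard pointwise algebraic inequality underlying the fractional $p$-Caccioppoli estimate (in the spirit of \cite{dicastro}) delivers
\[
I_1(p)\leq -c\int_{B_r}\int_{B_r}\frac{|w_p(x)\phi(x)-w_p(y)\phi(y)|^p}{|x-y|^{N+s_1p}}\,dxdy+c\int_{B_r}\int_{B_r}\frac{|\phi(x)-\phi(y)|^p}{|x-y|^{N+s_1p}}(w_p(x)^p+w_p(y)^p)\,dxdy,
\]
which gives the principal negative term plus the $\el=p$ contribution to the error sum (with trivial $\la^{p-p}=1$ prefactor). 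The hard part is $I_1(q)$, where the $q$-kernel clashes with the $p$-adapted test exponent. I mirror Step II of the proof of Lemma \ref{lem1}: by symmetry assume $u(x)>u(y)$, apply \cite[Lemma 3.1]{dicastro} with $a=\phi^{p/q}(x)$, $b=\phi^{p/q}(y)$ and $\e=t(\bar u(x)-\bar u(y))/\bar u(x)\in(0,1)$ for $t\in(0,1)$, and then exploit the chain-rule bound $|\phi^{p/q}(x)-\phi^{p/q}(y)|\leq C(\phi^{p/q}(x)+\phi^{p/q}(y))^{(p-q)/p}|\phi(x)-\phi(y)|$. The resulting leading residue $1+ct(\bar u(x)-\bar u(y))/\bar u(x)-(\bar u(x)/\bar u(y))^{m-1}$ is non-positive for small $t$ by monotonicity of $z\mapsto z^{m-1}$; this is exactly where the restriction $m>1$ enters, taking the place of the $p$-based argument of Lemma \ref{lem1}. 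The remaining piece is bounded by $c\,t^{1-q}\bar u(x)^{q-m}|\phi^{p/q}(x)-\phi^{p/q}(y)|^q$, and since $q<p$ and $\bar u\geq\la$ the factorization $\bar u^{q-m}=\bar u^{q-p}\bar u^{p-m}\leq\la^{q-p}w_p^p$ produces the advertised $\la^{q-p}$ prefactor on the $\el=q$ error term.

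For $I_2(\el)$, I split the outer integration according to the sign of $u(y)$, using $u\geq0$ on $B_R$. When $u(y)\geq 0$, the estimate $[u(x)-u(y)]^{\el-1}_+\leq(u(x)+\la)^{\el-1}$ combined with the identity $\bar u^{1-m}(u+\la)^{\el-1}=\bar u^{\el-m}=w_\el^\el$ yields the first tail factor $\sup_{x\in\mathrm{supp}\,\phi}\int_{\mb R^N\setminus B_r}|x-y|^{-N-s\el}\,dy$ against $\int_{B_r}w_\el^\el\phi^p\,dx$. When $u(y)<0$ (so necessarily $y\in\mb R^N\setminus B_R$), the bound $(u(x)-u(y))_+^{\el-1}\leq c_\el((u(x)+\la)^{\el-1}+u(y)_-^{\el-1})$ paired with $\bar u^{1-m}\leq\la^{1-\el}w_\el^\el$ (which uses $\bar u\geq\la$ and $\el>1$) gives the $\la^{1-\el}\int_{\mb R^N\setminus B_R}u(y)_-^{\el-1}|y-x_0|^{-N-s\el}\,dy$ factor, again against $\int_{B_r}w_\el^\el\phi^p\,dx$. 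Finally $-I_3(f)\leq\int_\Om|f|\bar u^{1-m}\phi^p\,dx$ by the triangle inequality, providing the first term on the right-hand side. Collecting the five bounds and transferring the negative $|w_p\phi|^p$ piece to the left-hand side yields the stated inequality.
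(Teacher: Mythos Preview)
Your proposal is correct and follows essentially the same approach as the paper's proof: test with $\eta=\phi^p\bar u^{1-m}$, decompose into $I_1(\el)$, $I_2(\el)$, $I_3(f)$, handle $I_1(p)$ via the algebraic inequality of \cite[Lemma~5.1]{dicastroHarn}, handle $I_1(q)$ by mirroring Step~II of Lemma~\ref{lem1} (now with exponent $m-1$ in the residue, which is where $m>1$ is used), and bound $I_2(\el)$ via $[\bar u(x)-\bar u(y)]^{\el-1}_+\leq c(\bar u(x)^{\el-1}+u(y)_-^{\el-1})$ together with $\bar u^{1-m}\leq\la^{1-\el}w_\el^\el$. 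The only cosmetic differences are that the paper first obtains $|w_p(x)-w_p(y)|^p\phi^p(y)$ and applies the product--splitting inequality $|w_p\phi(x)-w_p\phi(y)|^p\leq c|w_p(x)-w_p(y)|^p\phi^p(y)+c(w_p^p(x)+w_p^p(y))|\phi(x)-\phi(y)|^p$ at the end rather than directly inside $I_1(p)$, and it handles $I_2(\el)$ with a single unified bound instead of splitting by the sign of $u(y)$.
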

\begin{proof}
 Set $\bar{u}:= u+\la$ and let $\eta:= \bar{u}^{1-m}\phi^p$, where $m\in [1+\e,p-\e]$, for $\e>0$ small. Similarly to Lemma \ref{lem1}, we note that $\bar u$ is also a weak super-solution, therefore taking $\eta$ as a test function, we get  
  \begin{align}\label{eq23}
 	0 &\leq \sum_{(\el,s)}\int_{B_{r}} \int_{B_{r}}  \frac{[\bar u(x)-\bar u(y)]^{\el-1}}{|x-y|^{N+s\el}}(\eta(x)-\eta(y))dxdy \nonumber\\
 	&\quad +2 \sum_{(\el,s)}\int_{{\mb R^N}\setminus B_{r}}\int_{B_{r}} \frac{[\bar u(x)-\bar u(y)]^{\el-1}}{|x-y|^{N+s\el}}\eta(x)dxdy
 		-\int_{\Om}f(x)\eta(x)dx \nonumber \\
 	&=:I_1(\el)+I_2(\el)-I_3(f). 
 \end{align}
 We observe that, for all $x\in B_r$  and $y\in \mb R^N$ (note that $u\ge 0$ in $B_R$), 
 \begin{align*}
 	[\bar u(x)-\bar u(y)]^{\el-1} \leq c \big(\bar{u}(x)^{\el-1}+u(y)^{\el-1}_- \big) \quad\mbox{and }
 	\bar u^{1-m}(x) \leq \la^{1-\el}\bar u^{\el-m}(x).
 \end{align*}
 Therefore, setting $d\mu_\el=|x-y|^{-N-s\el}dxdy$, we deduce that
 \begin{align}\label{eq24}
  I_2(\el) &\leq c \int_{{\mb R^N}\setminus B_{r}}\int_{B_{r}} \bar u^{\el-1}(x)\bar{u}^{1-m}(x)\phi^p(x) d\mu_\el+ c\int_{\mb R^N\setminus B_{r}}\int_{ B_{r}} u(y)^{\el-1}_-\bar{u}^{1-m}(x)\phi^p(x) d\mu_\el \nonumber\\
  &\leq c \Big( \sup_{x\in {\rm supp}\phi} \int_{\mb R^N\setminus B_{r}} |x-y|^{-N-s\el}dy + \la^{1-\el} \int_{\mb R^N\setminus B_{R}} \frac{u(y)^{\el-1}_-}{|y-x_0|^{N+s\el}}dy\Big) \int_{B_r} w_\el(x)^\el\phi^p(x)dx,
 \end{align}
 where we have used $u\geq 0$ in $B_R$ and $w_\el:=
 	\bar u^\frac{\el-m}{\el}$. Next, for the first integral $I_1(p)$, from \cite[proof of Lemma 5.1, pp.1830-1833]{dicastroHarn}, we have 
\begin{align}\label{eq25}
 I_1(p)\leq -c \int_{ B_{r}}\int_{ B_{r}} |w_p(x)-w_p(y)|^p\phi^p(y) d\mu_p+ c\int_{ B_{r}}\int_{ B_{r}}|\phi(x)-\phi(y)|^p(w_p(x)^p+w_p(y)^p)d\mu_p.
\end{align}
 To estimate $I_1(q)$, without loss of generality, we assume $\bar u(x)>\bar u(y)$. We proceed exactly as in Step II of Lemma \ref{lem1} to get
  {\small \begin{align*}
 	(\bar u(x)-\bar u(y))^{q-1} \left[ \frac{\phi^p(x)}{\bar u(x)^{m-1}} - \frac{\phi^p(y)}{\bar u(y)^{m-1}}\right] 
 	&\leq \phi^p(y) \frac{(\bar u(x)-\bar u(y))^{q-1}}{\bar u(x)^{m-1}} \left[ 1+c\de \frac{\bar u(x)-\bar u(y)}{\bar u(x)}   -\frac{\bar u(x)^{m-1}}{\bar u(y)^{m-1}} \right] \\
 	&\quad+c\de^{1-q} \bar u(x)^{q-m} |\phi(x)-\phi(y)|^q.
 \end{align*}}
For suitable choice of $\de\in (0,1)$, from \cite[Proof of Lemma 5.1, p.1831]{dicastroHarn} (see the expression for $g(t)$ there), we have 
 \begin{align*}
 	\left[ 1+c\de \frac{\bar u(x)-\bar u(y)}{\bar u(x)}   -\frac{\bar u(x)^{m-1}}{\bar u(y)^{m-1}} \right] \leq 0,
 \end{align*}
 and an analogous result holds for the case $\bar u(x)< \bar u(y)$. Thus, 
 \begin{align}\label{eq26}
 I_1(q) \leq c \la^{q-p}\int_{ B_{r}}\int_{ B_{r}} 	\bar u(x)^{p-m} |\phi(x)-\phi(y)|^q d\mu_q.
 \end{align}
Therefore, combining \eqref{eq24}, \eqref{eq25}, \eqref{eq26} and using \eqref{eq23}, we obtain
 \begin{align*}
	&\int_{ B_{r}}\int_{ B_{r}} |w_p(x)-w_p(y)|^p\phi^p(y) d\mu_p \\
	&\leq  \int_{\Om}|f(x)|\bar u^{1-m}(x)\phi^p(x)dx+
	c \sum_{(\el,s)}\la^{\el-p} \int_{ B_{r}}\int_{ B_{r}} |\phi(x)-\phi(y)|^\el(w_p(x)^p+w_p(y)^p)d\mu_\el \nonumber\\
	&+ c \sum_{(\el,s)}\Big( \sup_{x\in {\rm supp}\phi} \int_{\mb R^N\setminus B_{r}} \frac{dy}{|x-y|^{N+s\el}} + \la^{1-\el} \int_{\mb R^N\setminus B_{R}} \frac{u(y)^{\el-1}_-dy}{|y-x_0|^{N+s\el}}\Big) \int_{B_r} w_\el(x)^\el\phi^p(x)dx.
\end{align*}
 By observing 
 \begin{align*}
 	|w_p(x)\phi(x)-w_p(y)\phi(y)|^p \leq c (w_p(x)^p+w_p(y)^p) |\phi(x)-\phi(y)|^p + c|w_p(x)-w_p(y)|^p \phi(y)^p,
 \end{align*}
we complete the proof of the lemma. \QED
\end{proof}

\begin{Lemma}\label{lem5}
	Let  $u\in X^{s_1,p}_{g}(\Om,\Om') \cap X^{s_2,q}_{g}(\Om,\Om')$ be a solution to problem \eqref{probMain} such that $u\geq 0$ in $B_R(x_0)\Subset\Om$, for some $R\in (0,1)$. Then,  for all $0<r<R$, the following holds:
	\begin{align*}
		&T_{p-1}(u_+;x_0,r)^{p-1}+ r^{s_1p-s_2q} T_{q-1}(u_+;x_0,r)^{q-1} \\ &\leq C (\sup_{B_r} u\big)^{p-1}+ C\Big(\frac{r}{R}\Big)^{s_1p} T_{p-1}(u_-;x_0,R)^{p-1}  + C r^{s_1p-s_2q} \Big(\frac{r}{R}\Big)^{s_2q} T_{q-1}(u_-;x_0,R)^{q-1}\\
		&\quad+ C r^\frac{(s_1p-s_2q)(p-1)}{p-q} +C \|f\|_{L^\ga(B_R)}r^\frac{\ga s_1p-N}{\ga(p-1)}, 
	\end{align*}
	where  $C=C(N,p,q,s_1,s_2)>0$ is a constant.
\end{Lemma}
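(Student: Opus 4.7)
Since $u$ is a solution, it is a supersolution. The plan is to test \eqref{solndef} against a non-negative bump $\phi\in C_c^\infty(B_{r/4}(x_0))$ with $\phi\equiv 1$ on $B_{r/8}(x_0)$ and $|\na\phi|\le c/r$. Because $\phi(y)=0$ off $B_{r/2}$, the symmetry of the kernel rewrites the weak inequality as
\begin{equation*}
2\sum_{(\el,s)}\iint_{B_{r/2}\times B_{r/2}^c}\frac{[u(x)-u(y)]^{\el-1}\phi(x)}{|x-y|^{N+s\el}}dxdy\ge R_1,
\end{equation*}
with $R_1:=\int_\Om f\phi\,dx-\sum_{(\el,s)}\iint_{B_{r/2}\times B_{r/2}}[u(x)-u(y)]^{\el-1}(\phi(x)-\phi(y))|x-y|^{-N-s\el}dxdy$. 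Setting $M:=\sup_{B_r}u\ge 0$, the trivial bound $|[u(x)-u(y)]^{\el-1}|\le 2^{\el-1}M^{\el-1}$ on $B_{r/2}\times B_{r/2}$, the Lipschitz estimate $\iint|\phi(x)-\phi(y)|\,|x-y|^{-N-s\el}\le Cr^{N-s\el}$, and H\"older's inequality for the source term together give $|R_1|\le CM^{p-1}r^{N-s_1p}+CM^{q-1}r^{N-s_2q}+C\|f\|_{L^\ga(B_R)}r^{N/\ga'}$.

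The decisive step is the pointwise upper bound, valid uniformly for $x\in B_{r/2}$, $y\in B_{r/2}^c$ and $\el\in\{p,q\}$:
\begin{equation*}
[u(x)-u(y)]^{\el-1}\le -c_\el u_+(y)^{\el-1}+C_\el M^{\el-1}+C_\el u_-(y)^{\el-1},
\end{equation*}
obtained by the three-case split $u(y)\ge 2M$, $0\le u(y)<2M$, $u(y)<0$; the negative contribution comes from the first case, where $u(x)-u(y)\le -u_+(y)/2$ forces $[u(x)-u(y)]^{\el-1}\le -(u_+(y)/2)^{\el-1}$. Inserting this bound into the preceding display and rearranging produces an upper bound on $\sum_{\el}c_\el\iint u_+(y)^{\el-1}\phi(x)|x-y|^{-N-s\el}dxdy$ in terms of $|R_1|$ together with $M^{\el-1}$- and $u_-^{\el-1}$-integrals. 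For $x\in\mathrm{supp}\,\phi\subset B_{r/4}(x_0)$ and $y\in B_r^c$ one has $|x-y|\asymp|x_0-y|$, while $u\ge 0$ on $B_R$ confines the $u_-$-integrals to $y\in B_R^c$; combined with $\int\phi\asymp r^N$, multiplication by $r^{s_1p-N}$ yields
\begin{equation*}
T_{p-1}(u_+;x_0,r)^{p-1}+r^{s_1p-s_2q}T_{q-1}(u_+;x_0,r)^{q-1}\le CM^{p-1}+CM^{q-1}r^{s_1p-s_2q}+C\sum_{(\el,s)}\Big(\tfrac{r}{R}\Big)^{s\el}r^{s_1p-s\el}T_{\el-1}(u_-;x_0,R)^{\el-1}+C\|f\|_{L^\ga(B_R)}r^{s_1p-N/\ga}.
\end{equation*}
A final application of Young's inequality with conjugate exponents $(p-1)/(q-1)$ and $(p-1)/(p-q)$ absorbs $CM^{q-1}r^{s_1p-s_2q}$ into $CM^{p-1}$ at the cost of the balancing term $Cr^{(s_1p-s_2q)(p-1)/(p-q)}$, giving exactly the announced estimate. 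The main technical subtlety is verifying the pointwise upper bound uniformly in the subquadratic range $1<q<p$; the rest is careful bookkeeping of the mixed scales $s_1p$ and $s_2q$, particularly in organising the $q$-contribution so that it combines with the $p$-contribution through the factor $r^{s_1p-s_2q}$.
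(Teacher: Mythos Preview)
Your argument is correct and in fact slightly more elementary than the paper's. The paper follows \cite[Lemma~4.2]{dicastroHarn}: it tests the equation with the sign-definite function $\eta=(u-2k)\phi$, $k=\sup_{B_r}u$, so that the factor $(u(x)-2k)\le -k$ in the cross term $I_2(\ell)$ forces the correct sign and produces the positive tail contribution $ck|B_r|r^{-s\ell}T_{\ell-1}(u_+;x_0,r)^{\ell-1}$; the local terms $I_1(\ell)$ are then bounded below via \cite[Lemma~3.1]{JDS2}. You achieve the same sign structure by a different route: testing with a plain bump $\phi$ and extracting the tail through the pointwise three-case bound $[u(x)-u(y)]^{\ell-1}\le -c_\ell u_+(y)^{\ell-1}+C_\ell M^{\ell-1}+C_\ell u_-(y)^{\ell-1}$. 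This avoids the auxiliary lemma for $I_1(q)$ entirely (your local term is trivially $\le CM^{\ell-1}r^{N-s\ell}$), and the remainder of the bookkeeping---passing from the annular integrals to tails via $|x-y|\asymp|x_0-y|$, confining $u_-$ to $B_R^c$, and the final Young step $M^{q-1}r^{s_1p-s_2q}\le CM^{p-1}+Cr^{(s_1p-s_2q)(p-1)/(p-q)}$---is identical in spirit to the paper. Both arguments in fact use only the supersolution property. One minor remark: your computation (and the paper's own) actually yields the $f$-term with exponent $r^{(\gamma s_1p-N)/\gamma}$, not $r^{(\gamma s_1p-N)/(\gamma(p-1))}$ as written in the lemma statement; this appears to be a typo in the paper rather than an error in your argument.
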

\begin{proof}
 Let $k:=\sup_{B_r} u$ and $\phi\in C_c^\infty(B_r)$ such that $0\leq \phi \leq 1$ and $|\na \phi|\leq 8/r$ in $B_r$, and $\phi\equiv 1$ in $B_{r/2}$. Then, testing the equation with $\eta:= (u-2k)\phi$ (note that $\eta\leq 0$), we get 
	\begin{align}\label{eq40}
		0 &= \sum_{(\el,s)} \int_{B_{r}} \int_{B_{r}}  \frac{[u(x)- u(y)]^{\el-1}}{|x-y|^{N+s\el}}(\eta(x)-\eta(y))dxdy \nonumber \\
		&\quad+2\sum_{(\el,s)} \int_{{\mb R^N}\setminus B_{r}}\int_{B_{r}} \frac{[u(x)- u(y)]^{\el-1}}{|x-y|^{N+s\el}}\eta(x) dxdy
		-\int_{\Om}f(x)\eta(x)dx \nonumber \\
		&=:I_1(\el)+I_2(\el)-I_3(f). 
	\end{align}
	Proceeding similarly to \cite[Lemma 4.2]{dicastroHarn},  we have 
	\begin{align*}
		&I_2(\el) \geq -c k^{\el}r^{-s\el}|B_r|-ck|B_r|R^{-s\el}T_{\el-1}(u_-;x_0,R)^{\el-1}+ck|B_r| r^{-s\el} T_{\el-1}(u_+;x_0,r)^{\el-1} \mbox{ and} \\
		& I_1(p) \geq 	-c k^{p}r^{-s_1p}|B_r|.
	\end{align*}	
 To estimate $I_1(q)$, following the proof of \cite[Lemma 3.1]{JDS2}, with the notation $\tl u=(u-2k)$, we get 
	\begin{align*}
		[\tl u(x)- \tl u(y)]^{q-1} (\tl u(x)\phi(x)-\tl u(y)\phi(y)) &\geq \frac{1}{2} |\tl u(x)-\tl u(y)|^p \phi(x)^p - c|\tl u(y)|^q |\phi(x)-\phi(y)|^q \\
		&\geq -ck^{q}|\phi(x)-\phi(y)|^q,
	\end{align*}
	where in the last line we have used $|\tl u|\leq 3k$ in $B_r$. Thus, 
	\begin{align*}
		I_1(q) \geq - ck^q \int_{ B_{r}}\int_{ B_{r}} \frac{|\phi(x)-\phi(y)|^q}{|x-y|^{N+s_2q}}dxdy
		&\geq r^{-s_2q}|B_r|- ck^q \int_{ B_{r}}\int_{ B_{r}} |x-y|^{q-N-s_2q}dxdy\\
		&\geq -ck^{q}r^{-s_2q}|B_r|.
	\end{align*}
 For $I_3(f)$, noting $|u-k|\leq 3k$ in $B_r$, we have 
 \begin{align*}
	I_3(f) \leq 3k \int_{ B_{r}} |f|\phi dx \leq 3 k \| f \|_{L^\ga(B_r)} |B_r|^{1/\ga'}.
 \end{align*}
 Collecting all informations in \eqref{eq40},  we obtain (upon multiplication with $r^{s_1p}(k|B_r|)^{-1}$)
	\begin{align*}
	 &T_{p-1}(u_+;x_0,r)^{p-1}+ r^{s_1p-s_2q} T_{q-1}(u_+;x_0,r)^{q-1} \\ &\leq C\left[ k^{p-1}+   r^{s_1p-s_2q} k^{q-1}+ \|f\|_{L^\ga(B_r)}r^\frac{\ga s_1p-N}{\ga(p-1)} \right] +C\Big(\frac{r}{R}\Big)^{s_1p} T_{p-1}(u_-;x_0,R)^{p-1} \\
	 &\quad+ C r^{s_1p-s_2q} \Big(\frac{r}{R}\Big)^{s_2q} T_{q-1}(u_-;x_0,R)^{q-1}.
	\end{align*}
 This proves the lemma on account of the relation $cr^{s_1p-s_2q}k^{q-1} \leq ck^{p-1}+c r^\frac{(s_1p-s_2q)(p-1)}{p-q}$ (by Young's inequality). \QED 
\end{proof}

\section{Harnack and weak Harnack inequalities}
 In this section, we prove the Harnack and weak Harnack type inequalities for fractional $(p,q)$-problems. We start with the following result.
 \begin{Proposition}[Weak Harnack inequality]\label{weakharn}
  Let  $u\in X^{s_1,p}_{g}(\Om,\Om') \cap X^{s_2,q}_{g}(\Om,\Om')$ be a weak super-solution to problem \eqref{probMain} such that $u\geq 0 $ in $B_R(x_0)\subset\Om$, for $R\in (0,1)$. Then, for any $r\in (0,R)$ and for any $t<\frac{N(p-1)}{N-s_1p}$ (for $ps_1<N$), there holds:
 	\begin{align*}
 	  \left(\Xint-_{B_{r/2}} u^t dx \right)^{1/t} \leq &c \inf_{B_{r}}+ c\Big(\frac{r}{R}\Big)^\frac{s_1p}{p-1}T_{p-1}(u_-;x_0,R) +c\Big(\frac{r}{R}\Big)^\frac{s_2q}{q-1}T_{q-1}(u_-;x_0,R)+c r^\frac{s_1p-s_2q}{p-q} \\
 	  &+c  r^\frac{\ga s_1p-N}{\ga(p-1)}  \| f \|_{L^\ga(B_R)}^\frac{1}{p-1},
 	\end{align*}
 	where $c=c(N,p,s_1,q,s_2)>0$ is a constant.
 \end{Proposition}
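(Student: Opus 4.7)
The plan is to follow the scheme of \cite{dicastroHarn} Section 4, adapted to the non-homogeneous setting: combine a negative-exponent Moser iteration, driven by the Caccioppoli inequality of Lemma \ref{lem4} and the fractional Sobolev embedding, with the small-exponent bound of Lemma \ref{lem3} in order to reach every exponent below the critical value $(p-1)p^*_{s_1}/p = N(p-1)/(N-s_1p)$.

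Concretely, I would first set
\[ T := \Big(\tfrac{r}{R}\Big)^{\tfrac{s_1p}{p-1}} T_{p-1}(u_-;x_0,R) + \Big(\tfrac{r}{R}\Big)^{\tfrac{s_2q}{q-1}} T_{q-1}(u_-;x_0,R) + r^{\tfrac{s_1p-s_2q}{p-q}} + r^{\tfrac{\ga s_1p-N}{\ga(p-1)}} \| f \|_{L^\ga(B_R)}^{\tfrac{1}{p-1}} \]
and work with $\bar u := u + T \geq T$ on $B_{3R/4}$. For $m\in(1,p)$ and concentric balls $B_{\rho'} \Subset B_\rho \subset B_{3R/4}$, I would apply Lemma \ref{lem4} with $\lambda = T$ and a standard cutoff $\phi$ satisfying $\phi\equiv 1$ on $B_{\rho'}$, $|\nabla\phi|\lesssim (\rho-\rho')^{-1}$, and then use the fractional Sobolev embedding on $w_p\phi = \bar u^{(p-m)/p}\phi$. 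The $f$-integral in Lemma \ref{lem4} is absorbed via $\bar u^{1-m}\leq T^{1-m}$; the two nonlocal tails of $u_-$ are dominated by the first two summands of $T$; and the non-homogeneous pieces $w_q^q = \bar u^{q-m}$ and $\lambda^{q-p} r^{s_1p-s_2q}$ coming from the $q$-Laplacian are handled through the crucial pointwise bound $\bar u \geq T \geq r^{(s_1p-s_2q)/(p-q)}$, which in particular yields $\bar u^{q-m} \leq r^{-(s_1p-s_2q)} \bar u^{p-m}$ and thus matches the corresponding $p$-contribution. The net outcome is a reverse Hölder inequality
\[ \Big(\Xint-_{B_{\rho'}} \bar u^{\chi(p-m)}\Big)^{\tfrac{1}{\chi(p-m)}} \leq \Big(\frac{C\, r^p}{(\rho-\rho')^p (p-m)^{\alpha}}\Big)^{\tfrac{1}{p-m}} \Big(\Xint-_{B_\rho} \bar u^{p-m}\Big)^{\tfrac{1}{p-m}}, \qquad \chi := \tfrac{p^*_{s_1}}{p} > 1, \]
with $\alpha>0$ depending only on $N,s_1,p,s_2,q$. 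Iterating along a sequence $m_j\to 1^+$ on dyadic radii $r_j\downarrow r/2$, in the manner of \cite{dicastroHarn}, yields $\big(\Xint-_{B_{r/2}} \bar u^t\big)^{1/t} \leq C \big(\Xint-_{B_r} \bar u^\tau\big)^{1/\tau}$ for every fixed $t < (p-1)\chi$, where $\tau\in(0,1)$ is the exponent furnished by Lemma \ref{lem3}. Feeding Lemma \ref{lem3} into the right-hand side and unraveling $\bar u = u + T$ on the left concludes the proof.

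The main obstacle is precisely the non-homogeneity: the Caccioppoli inequality of Lemma \ref{lem4} generates a $T_{q-1}$-tail and a cross-term of the form $\lambda^{q-p} r^{s_1p-s_2q}$, neither of which scales as in the $p$-Laplacian framework of \cite{dicastroHarn}. The remedy, already built into Lemmas \ref{lem1}--\ref{lem3}, is to include the summand $r^{(s_1p-s_2q)/(p-q)}$ in $T$ so that every non-homogeneous contribution can be traded, via the pointwise lower bound $\bar u \geq r^{(s_1p-s_2q)/(p-q)}$, for a multiple of the homogeneous $p$-contribution. A secondary technical point is to control the blow-up of the iteration constants $|p-m_j|^{-\alpha/(p-m_j)}$ as $m_j\to 1^+$, which is achieved by the standard choice of $m_j$ such that $\sum_j 1/(p-m_j)$ remains summable.
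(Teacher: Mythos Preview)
Your proposal is correct and follows essentially the same route as the paper's proof: apply the Caccioppoli inequality of Lemma \ref{lem4} with $\la=T$, combine it with the fractional Sobolev embedding to obtain a reverse H\"older inequality for $\bar u^{p-m}$, run a finite Moser iteration to pass from the small exponent $\tau$ of Lemma \ref{lem3} to any $t<(p-1)p^*_{s_1}/p$, and finish by invoking Lemma \ref{lem3}.

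Two minor remarks. First, your treatment of the $f$-contribution is in fact a slight simplification of the paper's: the paper bounds $\bar u^{1-m}\le\la^{1-p}\bar u^{p-m}$, then interpolates $\|w\phi\|_{L^{p\ga'}}$ between $L^{p^*_{s_1}}$ and $L^p$ and reabsorbs the $L^{p^*_{s_1}}$ part into the left-hand side via Young's inequality, whereas your cruder pointwise bound $\bar u^{1-m}\le T^{1-m}$ together with $T^{p-m}\le \Xint-_{B_\rho}\bar u^{p-m}$ already gives the same conclusion without the absorption step. Second, your comment about the blow-up of $|p-m_j|^{-\al/(p-m_j)}$ ``as $m_j\to 1^+$'' is slightly off: that particular quantity is bounded in this limit (since $p-m_j\to p-1>0$); the genuine blow-up of the Caccioppoli constant in Lemma \ref{lem4} is of the form $(m-1)^{-\al}$, but for fixed $t<(p-1)N/(N-s_1p)$ the iteration terminates after finitely many steps with all $m_j$ bounded away from $1$, so no summability argument is needed.
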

 \begin{proof}
 Let $1/2 < \kappa'<\kappa \leq 3/4$ and let $\psi\in C_c^\infty(B_{\kappa r})$ be such that $\phi\equiv 1$ in $B_{\kappa'r}$, $0\leq \phi\leq 1$ and $|\na \phi|\leq \frac{4}{(\kappa-\kappa')r}$. Set $w\equiv w_p=\bar u^\frac{p-m}{p}:=(u+\la)^\frac{p-m}{p}$, for $\la>0$. Applying the fractional Poincar\'e inequality to the function $w\phi$,  we get
 	\begin{align}\label{eq27}
 		\left(\Xint-_{B_r} |w(x)\phi(x)|^{p^*_{s_1}}dx\right)^\frac{p}{p^*_{s_1}} \leq c r^{s_1p} \Xint-_{B_r}\int_{B_r} \frac{|w(x)\phi(x)-w(y)\phi(y)|^p}{|x-y|^{N+s_1p}}dxdy.
 	\end{align}  
 To estimate the right hand side quantity, we will use Lemma \ref{lem4}. Using the bound on $|\na\phi|$, we have 
 	\begin{align*}
 		\int_{ B_{r}}\int_{ B_{r}} \frac{|\phi(x)-\phi(y)|^\el}{|x-y|^{N+s\el}}(w_p(x)^p+w_p(y)^p)dxdy \leq \frac{cr^{-s\el}}{(\kappa-\kappa')^\el} \int_{ B_{\kappa r}} w_p(x)^p dx.	
 	\end{align*}
 	Moreover, using ${\rm supp}\phi\subset B_{\kappa r}$, we see that 
 	\begin{align*}
 		\sup_{x\in {\rm supp}\phi} \int_{\mb R^N\setminus B_r} |x-y|^{-N-s\el}dy \leq c r^{-s\el}.
 	\end{align*} 
 Furthermore, employing H\"older's inequality and interpolation result for $L^p$-spaces (observe that $\bar u\geq \la$ and $p<p\ga'<p^*_{s_1}$), we deduce that 
 \begin{align*}
 	\int_{\Om}|f|\bar u^{1-m}\phi^pdx \leq \la^{1-p} \int_{\Om}|f|\bar u^{p-m} \phi^p dx&\leq \la^{1-p} \| f \|_{L^\ga(B_R)} \| w\phi \|_{L^{p\ga'}(B_R)}^p. 
 	\\
 	&\leq \la^{1-p} \| f \|_{L^\ga(B_R)} \| w\phi \|_{L^{p^*_{s_1}}(B_R)}^\frac{N}{\ga s_1} \| w\phi \|_{L^{p}(B_R)}^\frac{\ga s_1p-N}{\ga s_1},
 \end{align*}
 which on using Young's inequality (with the exponents $ \frac{\ga s_1p}{N}$ and $\frac{\ga s_1p}{\ga s_1p-N}$), for $\e>0$, yields
 	 \begin{align*}
 	 	\int_{B_r}|f(x)|\bar u^{1-m}(x)\phi^p(x)dx &\leq C_1\e \la^{1-p} \| f \|_{L^\ga(B_R)}  r^{N-s_1p} 	\left(\Xint-_{B_r} |w\phi|^{p^*_{s_1}} dx\right)^{p/p^*_{s_1}} \\
 	 	&\quad+ C_1\e^{-\frac{N}{\ga s_1p-N}} \la^{1-p} \| f \|_{L^\ga(B_R)} r^N \Xint-_{B_r}|w\phi|^p dx, 
 	 \end{align*}
  where $C_1>0$ is a constant which depends only on $N,s_1,p$. Choosing $\e=\frac{\la^{p-1}}{2c C_1}\frac{1}{\| f \|_{L^\ga(B_R)}}$, with $c$ as in \eqref{eq27}, we get
 	\begin{align*}
 		\Xint-_{B_r}|f(x)|\bar u^{1-m}(x)\phi^p(x)dx \leq \frac{r^{-s_1p}}{2c} 	\left(\Xint-_{B_r} |w\phi|^{p^*_{s_1}}\right)^\frac{p}{p^*_{s_1}} 
 		+ c_2  \Big(\frac{\| f \|_{L^\ga(B_R)}}{\la^{p-1}}\Big)^\frac{\ga s_1p}{\ga s_1p-N} \Xint-_{B_r}|w\phi|^p dx,
 	\end{align*}
 where $c_2=c_2(N,s_1,p,c)>0$ is a constant. Collecting all these informations in \eqref{eq27} and using Lemma \ref{lem4}, we obtain
 	\begin{align*}
 		\left(\Xint-_{B_r} |w\phi|^{p^*_{s_1}}dx\right)^\frac{p}{p^*_{s_1}} &\leq c r^{s_1p} \left[\frac{r^{-s_1p}}{(\kappa-\kappa')^p}+  \frac{\la^{q-p}r^{-s_2q}}{(\kappa-\kappa')^q}+ \Big(\frac{\| f \|_{L^\ga(B_R)}}{\la^{p-1}}\Big)^\frac{\ga s_1p}{\ga s_1p-N} \right]\Xint-_{ B_{\kappa r}} w(x)^p dx \\
 		& \ + c r^{s_1p} \left[r^{-s_1p}+ \la^{1-p} R^{-s_1p} T_{p-1}(u_-;x_0,R)^{p-1}  \right]\Xint-_{B_{\kappa r}}w_p(x)^pdx \\
 		& \ +c  r^{s_1p} \left[r^{-s_2p}+ \la^{1-p} R^{-s_2p} T_{q-1}(u_-;x_0,R)^{q-1}  \right]\Xint-_{B_{\kappa r}}w_q(x)^qdx. 
 	\end{align*}
 	Noting the fact that $w_q(x)^q=\bar u(x)^{q-m} \leq \la^{q-p}\bar u(x)^{p-m}= \la^{q-p}w_p(x)^p$, we have 
 	{\small\begin{align}\label{eq28}
 			\left(\Xint-_{B_r} |w(x)\phi(x)|^{p^*_{s_1}}dx\right)^\frac{p}{p^*_{s_1}} &\leq \frac{c}{(\kappa-\kappa')^p} \left[1+\la^{q-p}r^{s_1p-s_2q}+\la^{1-p} \Big(\frac{r}{R}\Big)^{s_1p} T_{p-1}(u_-;x_0,R)^{p-1} \right. \nonumber\\
 			&\left. \quad+ \la^{1-p} r^{s_1p-s_2q} \Big(\frac{r}{R}\Big)^{s_2q}T_{q-1}(u_-;x_0,R)^{q-1}+r^{s_1p} \Big(\frac{\| f \|_{L^\ga(B_R)}}{\la^{p-1}}\Big)^\frac{\ga s_1p}{\ga s_1p-N}\right] \nonumber\\
 			&\qquad \times \Xint-_{B_{\kappa r}}w_p(x)^pdx.
 	\end{align}}
 	We take 
 	\begin{align}\label{eq30}
 		\la=\Big(\frac{r}{R}\Big)^\frac{s_1p}{p-1}T_{p-1}(u_-;x_0,R) +\Big(\frac{r}{R}\Big)^\frac{s_2q}{q-1}T_{q-1}(u_-;x_0,R)+ r^\frac{s_1p-s_2q}{p-q}+ \| f \|_{L^\ga(B_R)}^\frac{1}{p-1} r^\frac{\ga s_1p-N}{\ga(p-1)}.
 	\end{align}
 Moreover,  as in the proof of Lemma \ref{lem1},  observe that
 	\begin{align*}
 		\la^{1-p}& r^{s_1p-s_2q}\Big(\frac{r}{R}\Big)^{s_2q}T_{q-1}(u_-;x_0,R)^{q-1} \\
 		&= \la^{1-q} \Big(\frac{r}{R}\Big)^{s_2q}T_{q-1}(u_-;x_0,R)^{q-1} \la^{q-p} r^{s_1p-s_2q} 
 		\leq 1
 	\end{align*}
 	and 
 	 \begin{align*}
 	 	\Big(\frac{\| f \|_{L^\ga(B_R)}}{\la^{p-1}}\Big)^\frac{\ga s_1p}{\ga s_1p-N} \leq r^{-\frac{\ga s_1 p-N}{\ga}\times \frac{\ga s_1 p}{\ga s_1p-N}} = r^{-s_1p}.
 	 \end{align*}
 	Therefore, from \eqref{eq28} and noting that $\phi\equiv 1$ in $B_{\kappa' r}$, we get 
 	\begin{align*}
 		\left(\Xint-_{B_{\kappa'r}} |w(x)|^{p^*_{s_1}}dx\right)^{p/p^*_{s_1}} \leq c\left(\Xint-_{B_r} |w(x)\phi(x)|^{p^*_{s_1}}dx\right)^{p/p^*_{s_1}} \leq \frac{c}{(\kappa-\kappa')^p} \Xint-_{B_{\kappa r}}w(x)^pdx,
 	\end{align*} 
 	that is,
 	\begin{align*}
 		\left(\Xint-_{B_{\kappa'r}} \bar u(x)^{\frac{(p-m)N}{N-s_1p}}dx\right)^\frac{N-s_1p}{N} \leq \frac{c}{(\kappa-\kappa')^p} \Xint-_{B_{\kappa r}}\bar u(x)^{p-m}dx.
 	\end{align*}
 	Thus, proceeding as in \cite[(5.14), p.1834]{dicastroHarn} (or one can use a standard finite Moser iteration argument similar to  \cite[Theorem 8.18]{gilbarg}), we have 
 	\begin{align*}
 		\left(\Xint-_{B_{r/2}} \bar u(x)^tdx\right)^{1/t} \leq c \left(\Xint-_{B_{3r/4}} \bar u(x)^{t_1}dx\right)^{1/t_1} \quad\mbox{for all }0<t_1<t<\frac{N(p-1)}{N-s_1p}.
 	\end{align*}
 	Applying Lemma \ref{lem3}, for $t_1=\tau$, we obtain 
 	\begin{align}\label{eq29}
 		\left(\Xint-_{B_{r/2}} \bar u(x)^tdx\right)^{1/t} \leq& c\inf_{B_{r}} \bar u + c\Big(\frac{r}{R}\Big)^\frac{s_1p}{p-1}T_{p-1}(u_-;x_0,R) +c\Big(\frac{r}{R}\Big)^\frac{s_2q}{q-1}T_{q-1}(u_-;x_0,R) \nonumber\\
 		& +c r^\frac{s_1p-s_2q}{p-q}+c  \| f \|_{L^\ga(B_R)}^\frac{1}{p-1} r^\frac{\ga s_1p-N}{\ga(p-1)}. 
 	\end{align}
 	Then, noticing $\bar u=u+\la$, we observe that 
 	\begin{align*}
 		\left(\Xint-_{B_{r/2}} u(x)^tdx\right)^{1/t} \leq \left(\Xint-_{B_{r/2}} \bar u(x)^tdx\right)^{1/t}.
 	\end{align*}
 	This combined with \eqref{eq29} and the definition of $\la$ (given by \eqref{eq30}) proves the proposition. \QED
 \end{proof}
 By replacing the constant $c$ appearing in Proposition \ref{weakharn} with $\bar c=\max\{c,2\}$, we obtain the following result.
 \begin{Corollary}\label{corweakharn}
 	Under the hypothesis of Proposition \ref{weakharn}, for all $0<t<\frac{N(p-1)}{N-ps_1}$, we have 
 	\begin{align*}
 		\inf_{B_{r}} u \geq &\varsigma 	\left(\Xint-_{B_{r/2}}  u(x)^tdx\right)^{1/t} -c \Big(\frac{r}{R}\Big)^\frac{s_1p}{p-1}T_{p-1}(u_-;x_0,R) -c\Big(\frac{r}{R}\Big)^\frac{s_2q}{q-1}T_{q-1}(u_-;x_0,R) \\
 		&-c r^\frac{s_1p-s_2q}{p-q}-c  \| f \|_{L^\ga(B_R)}^\frac{1}{p-1} r^\frac{\ga s_1p-N}{\ga(p-1)},
 	\end{align*}
 	for some $\varsigma\in (0,1)$.
 \end{Corollary}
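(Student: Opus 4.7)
The statement is essentially a rearrangement of Proposition \ref{weakharn}, so the proof should be short and direct. The plan is to start from the weak Harnack estimate
\begin{align*}
\left(\Xint-_{B_{r/2}} u^t\, dx\right)^{1/t} \leq c \inf_{B_r} u + c\Big(\frac{r}{R}\Big)^{s_1p/(p-1)} T_{p-1}(u_-;x_0,R) + \cdots + c\,\|f\|_{L^\ga(B_R)}^{1/(p-1)} r^{(\ga s_1p-N)/(\ga(p-1))},
\end{align*}
which holds for $0 < t < N(p-1)/(N-ps_1)$, and solve it for $\inf_{B_r} u$.

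First I would replace the constant $c$ in Proposition \ref{weakharn} by $\bar c := \max\{c,2\}$; since this only weakens the inequality, it remains valid. Dividing both sides by $\bar c$ and moving the tail, mismatch, and source terms to the left, I obtain
\begin{align*}
\frac{1}{\bar c} \left(\Xint-_{B_{r/2}} u^t\, dx\right)^{1/t} - \Big(\frac{r}{R}\Big)^{s_1p/(p-1)} T_{p-1}(u_-;x_0,R) - \cdots - \|f\|_{L^\ga(B_R)}^{1/(p-1)} r^{(\ga s_1p-N)/(\ga(p-1))} \leq \inf_{B_r} u.
\end{align*}
Setting $\varsigma := 1/\bar c$, the condition $\bar c \geq 2$ guarantees $\varsigma \in (0,1/2] \subset (0,1)$, while the remaining constants on the right hand side can all be absorbed into a single constant $c$ (allowed to depend on the same parameters as in Proposition \ref{weakharn}). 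This yields exactly the claimed inequality.

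There is no real obstacle here: the argument is purely algebraic rearrangement. The only mild subtlety is making sure the coefficient in front of the $L^t$-average is strictly less than $1$, which is precisely why one inflates $c$ to $\bar c \geq 2$; without this step one could only conclude $\varsigma \in (0,1]$, whereas the strict inequality $\varsigma < 1$ is what will be useful in the subsequent oscillation-decay/iteration arguments used to prove the Hölder continuity result of Theorem \ref{intreg}.
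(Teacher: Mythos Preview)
Your proposal is correct and matches the paper's own argument: the paper simply states that replacing the constant $c$ in Proposition \ref{weakharn} by $\bar c=\max\{c,2\}$ yields the corollary, and your proof spells out exactly this rearrangement, including the reason for enforcing $\bar c\ge 2$ so that $\varsigma=1/\bar c\in(0,1)$.
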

 
\textbf{Proof of Theorem \ref{harnack}}:
From Corollary \ref{corlocalbd}, for all $\rho\in (0,1)$, we have
 \begin{align*}
 	\sup_{B_{\rho/2}} u \leq & \vep T_{p-1}(u_+;x_0,\frac{\rho}{2})+ \vep \rho^\frac{s_1p-s_2q}{p-1} T_{q-1}(u_+;x_0,\frac{\rho}{2})^\frac{q-1}{p-1}+ \vep \rho^\frac{s_1p-s_2q}{p-q} \nonumber\\
 	&+   \vep^\frac{p-\theta}{p}  \|f\|_{L^\ga(B_\rho)}^{1/(p-1)} + C\vep^\frac{(1-p)p^*_{s_1}}{p(p^*_{s_1}-p\sg)} \Big(\Xint-_{B_{\rho}}u_+^{p\sg}dx\Big)^{1/(p\sg)},
 \end{align*}
where  $\theta=\frac{(p^*_{s_1})'}{\ga}$ and $\sg=\frac{p-\theta}{p(1-\theta)}$. Using Lemma \ref{lem5}, the above inequality reduces to 
\begin{align*}
 \sup_{B_{\rho/2}} u \leq &C\vep^\frac{(1-p)p^*_{s_1}}{p(p^*_{s_1}-p\sg)} \Big(\Xint-_{B_{\rho}}u_+^{p\sg}dx\Big)^{1/(p\sg)} +C\vep \sup_{B_\rho} u+ C\vep \Big(\frac{\rho}{R}\Big)^\frac{s_1p}{p-1} T_{p-1}(u_-;x_0,R)
 \nonumber\\
 &+ C\vep \Big(\frac{\rho}{R}\Big)^\frac{s_2q}{p-1} \rho^\frac{s_1p-s_2q}{p-1}T_{q-1}(u_-;x_0,R)^\frac{q-1}{p-1} 
 + C\vep \rho^\frac{s_1p-s_2q}{p-1} 
 +C \vep^\frac{p-\theta}{p} \|f\|_{L^\ga(B_R)}^{1/(p-1)}.
\end{align*}
 Let $1/2\leq \kappa'<\kappa\leq 3/4$ and let $\phi\in C_c^\infty(B_{\kappa r})$ be such that $\phi\equiv 1$ in $B_{\kappa'r}$ and $|\na \phi| \leq 4/[(\kappa-\kappa')r]$. Then, by a covering argument, setting $\rho=(\kappa-\kappa')r$, we get 
 \begin{align*}
 	 \sup_{B_{\kappa'r}} u \leq &C \frac{\vep^\frac{(1-p)p^*_{s_1}}{p(p^*_{s_1}-p\sg)}}{(\kappa-\kappa')^{N/(p\sg)}} \left( \Xint-_{B_{\kappa r}}u_+^{p\sg} dx \right)^{1/(p\sg)} +C\vep \sup_{B_{\kappa r}} u+ \vep \Big(\frac{r}{R}\Big)^\frac{s_1p}{p-1} T_{p-1}(u_-;x_0,R)
 	\nonumber\\
 	&+ C\vep \Big(\frac{r}{R}\Big)^\frac{s_2q}{p-1} r^\frac{s_1p-s_2q}{p-1}T_{q-1}(u_-;x_0,R)^\frac{q-1}{p-1} 
 	+ C\vep r^\frac{s_1p-s_2q}{p-1} 
 	+C \vep^\frac{p-\theta}{p}\|f\|_{L^\ga(B_R)}^{1/(p-1)}.
 \end{align*}
 Next, upon using Young's inequality and taking $\vep=1/(4C)$, for all $t\in (0,p\sg)$, we obtain
 \begin{align*}
  C\frac{\vep^\frac{(1-p)p^*_{s_1}}{p(p^*_{s_1}-p\sg)}}{(\kappa-\kappa')^{N/(p\sg)}} \left( \Xint-_{B_{\kappa r}}u_+^{p\sg} dx \right)^{1/(p\sg)}
 	&\leq C \frac{\vep^\frac{(1-p)p^*_{s_1}}{p(p^*_{s_1}-p\sg)}}{(\kappa-\kappa')^{N/(p\sg)}}  \big( \sup_{B_{\kappa r}} u\big)^\frac{p\sg-t}{p\sg} \left( \Xint-_{B_{\kappa r}}u_+^{t} dx \right)^{1/(p\sg)}
  \\
 	&\leq \frac{1}{4} \sup_{B_{\kappa r}} u + \frac{C}{(\kappa-\kappa')^{N/t}} \left( \Xint-_{B_{\kappa r}}u_+^t dx \right)^{1/t}.
 \end{align*}
Therefore, using H\"older's inequality, for all $t\in (0,p^*_{s_1})$, we deduce that
\begin{align*}
 \sup_{B_{\kappa'r}} u &\leq  \frac{1}{2} \sup_{B_{\kappa r}} u + \frac{C}{(\kappa-\kappa')^{N/t}} \left( \Xint-_{B_{\kappa r}} u_+^t dx \right)^{1/t} +  C \sup_{B_{\kappa r}} u+ \vep \Big(\frac{r}{R}\Big)^\frac{s_1p}{p-1} T_{p-1}(u_-;x_0,R)
 	\nonumber\\
 	&\quad+ C \Big(\frac{r}{R}\Big)^\frac{s_2q}{p-1} r^\frac{s_1p-s_2q}{p-1}T_{q-1}(u_-;x_0,R)^\frac{q-1}{p-1} 
 	+ C r^\frac{s_1p-s_2q}{p-1} 
 	+C  \|f\|_{L^\ga(B_R)}^{1/(p-1)}.
\end{align*}
 Now, the rest of the proof follows using a standard iteration technique, e.g., see \cite[Proof of Theorem 1.1, p.1829]{dicastroHarn}. \QED

\section{H\"older continuity results}
In this section, we prove our main H\"older regularity results.\\  
\textbf{Proof of Theorem \ref{intreg}}: 
We first observe that 
$$u\in  X^{s_1,p}_{u}(B_{3R_0/2}(x_0),B_{2R_0}(x_0))\cap  X^{s_2,q}_{u}(B_{3R_0/2}(x_0),B_{2R_0}(x_0)).$$ 
Furthermore, $u$ solves 
\begin{equation*}
	(-\Delta)^{s_1}_{p}u+  (-\Delta)^{s_2}_{q}u = f \quad \text{in} \; B_{3R_0/2}(x_0)
\end{equation*}
and $u\in L^\infty(B_R(x_0))$ (thanks to Proposition \ref{localbdd}). For all $j\in\mb N\cup\{0\}$, define sequences $R_j=\frac{R_0}{4^j}$, $B_j=B_{R_j}$, $\frac{1}{2}B_j=B_{R_j/2}$.
We claim that there exist $\al>0$ (a generic constant), $\omega>0$, a non-decreasing sequence $\{m_j\}$ and a non-increasing sequence $\{M_j\}$ such that  
\begin{align*}
	m_j\le\inf_{B_j}\; u\le\sup_{B_j}\; u\le M_j ,\qquad M_j-m_j=\omega R_j^\al.
\end{align*}
We will proceed by induction. For $j=0$, we set $M_0= \|u\|_{L^\infty(B_{R_0})}$ and $m_0= M_0-\omega R_0^\al$, where $\omega$ satisfies 
\begin{equation}\label{eqmu}
	\omega\ge \frac{2\|u\|_{L^\infty(B_{R_0})}}{R_0^\al}>0.
\end{equation}
Hence, $m_0\le\ds\inf_{B_0}\; u\le\ds\sup_{B_0}\; u\le M_0.$ Suppose that the claim holds for all $i\in\{0,\dots, j\}$ for some $j\in\mb N$. To prove the claim for $j+1$, we first consider the case $1<\frac{N(p-1)}{N-s_1p}$. Then
\begin{align}\label{eq34}
	M_j-m_j&= \Xint-_{\frac{1}{2}B_{j+1}}(M_j-u(x))dx+\Xint-_{\frac{1}{2}B_{j+1}}(u(x)-m_j)dx \nonumber\\
	&\le \Big(\Xint-_{\frac{1}{2}B_{j+1}}(M_j-u(x))^{t}dx\Big)^{1/t}+ \Big(\Xint-_{\frac{1}{2}B_{j+1}}(u(x)-m_j)^{t}dx\Big)^{1/t},
\end{align}
for some $1<t<\frac{N(p-1)}{N-s_1p}$. Employing Corollary \ref{corweakharn}, for the choice $r=R_{j+1}$ and $R=R_j$, we obtain 
\begin{align*}
	\varsigma(M_j-m_j)&\le \inf_{B_{j+1}}(M_j-u)+  \inf_{B_{j+1}}(u-m_j) + C\sum_{(\el,s)} T_{\el-1}((M_j-u)_-;x_0,R_j) \\
	&\quad+ C\sum_{(\el,s)} T_{\el-1}((u-m_j)_-;x_0,R_j) 
	+ C R_j^\frac{ps_1-qs_2}{p-q} + C  \| f \|_{L^\ga(B_{R_0})}^\frac{1}{p-1} R_j^\frac{\ga s_1p-N}{\ga(p-1)},
\end{align*} 
that is,
\begin{align}\label{eq31}
	\underset{B_{j+1}}{\rm osc}\; u&\le \left(1-\varsigma\right)(M_j-m_j) +C\sum_{(\el,s)} T_{\el-1}((M_j-u)_-;x_0,R_j)+ C\sum_{(\el,s)} T_{\el-1}((u-m_j)_-;x_0,R_j) \nonumber\\
	&\quad + C R_j^\frac{ps_1-qs_2}{p-q}
	+C  \| f \|_{L^\ga(B_{R_0})}^\frac{1}{p-1} R_j^\frac{\ga s_1p-N}{\ga(p-1)}.
\end{align}
Now, we estimate the different tail terms appearing in the above expression:
\begin{align*}
	T_{\el-1}((u-m_j)_-;x_0,R_j)^{\el-1}= R_j^{s\el} \left[\sum_{k=0}^{j-1} \int_{B_k\setminus B_{k+1}}\frac{(u(y)-m_j)^{\el-1}}{|x_0-y|^{N+s\el}}dy +  \int_{B_0^c} \frac{(u(y)-m_j)^{\el-1}}{|x_0-y|^{N+s\el}}dy \right],
\end{align*}
where $B_0^c:=\mb R^N\setminus B_0$. By the induction hypothesis and \cite[Proof of Theorem 5.4]{iann}, for $\al\in (0,1)$, we have 
\begin{align*}
	\sum_{k=0}^{j-1} \int_{B_k\setminus B_{k+1}}\frac{(u(y)-m_j)^{\el-1}}{|x_0-y|^{N+s\el}}dy \leq c \omega^{\el-1}R_j^{\al(\el-1)-\el s}S_\el(\al),
\end{align*}
where $S_\el(\al):= \ds\sum_{k=1}^\infty \frac{(4^{\al k}-1)^{\el-1}}{4^{\el sk}}\ra 0$ as $\al\ra0^+$. For the second term, we set  $Q(u;x_0,R_0):=\|u\|_{L^\infty(B_0)}+T_{p-1}(u;x_0,R_0)+T_{q-1}(u;x_0,R_0)$. By observing that $m_j \leq \inf_{B_j} u\leq \sup_{B_j} u \leq \|u\|_{L^\infty(B_0)}$, we deduce that 
\begin{align*}
	\int_{\mb R^N\setminus B_0} \frac{(u(y)-m_j)^{\el-1}}{|x_0-y|^{N+s\el}}dy &\leq \int_{\mb R^N\setminus B_0} \frac{(|u(y)|+\|u\|_{L^\infty(B_0)})^{\el-1}}{|x_0-y|^{N+s\el}}dy \\
	&\leq C\frac{\|u\|_{L^\infty(B_0)}^{\el-1}+T_{\el-1}(u;x_0,R_0)^{\el-1}}{R_0^{sl}}.
\end{align*}
Therefore,
\begin{align*}
	T_{\el-1}((u-m_j)_-;x_0,R_j)^{\el-1}\leq c \omega^{\el-1}R_j^{\al(\el-1)}S_\el(\al) + c  R_j^{s\el} \frac{\big(\|u\|_{L^\infty(B_0)}+T_{\el-1}(u;x_0,R_0)\big)^{\el-1}}{R_0^{s\el}},
\end{align*}
and an analogous estimate holds for $T_{\el-1}((M_j-u)_-;x_0,R_j)^{\el-1}$ also. Thus,  from \eqref{eq31} and the inductive hypothesis, we get 
\begin{align*}
	\underset{B_{j+1}}{\rm osc}\; u&\le \left(1-\varsigma\right)\omega R_j^\al +c \omega R_j^{\al} S_p(\al)^{1/(p-1)} + c R_j^\frac{s_1p}{p-1} \frac{Q(u;x_0,R_0)}{R_0^{s_1p/(p-1)}} \nonumber\\ 
	&\quad+c \omega R_j^{\al}S_q(\al)^{1/(q-1)}+c R_j^\frac{s_2q}{q-1} \frac{Q(u;x_0,R_0)}{R_0^{s_2q/(q-1)}}   + C R_j^\frac{ps_1-qs_2}{p-q}+ C  \| f \|_{L^\ga(B_{R_0})}^\frac{1}{p-1} R_j^\frac{\ga s_1p-N}{\ga(p-1)}.	
\end{align*}
Then, for $\al<\min \big\{ \frac{ps_1}{p-1}, \frac{qs_2}{q-1}, \frac{ps_1-qs_2}{p-q},\frac{\ga s_1p-N}{\ga(p-1)} \big\}$, we have
\begin{align}\label{eq32}
	\underset{B_{j+1}}{\rm osc}\; u&\le 4^{\al}\left[1-\varsigma + c  S_p(\al)^\frac{1}{p-1}+ c S_q(\al)^\frac{1}{q-1} \right] \omega R_{j+1}^\al \nonumber \\
	&\quad+ \frac{c4^\al}{R_0^\al} \left[ Q(u;x_0,R_0) + R_0^\frac{ps_1-qs_2}{p-q}+  \| f \|_{L^\ga(B_{R_0})}^\frac{1}{p-1} R_0^\frac{\ga s_1p-N}{\ga(p-1)} \right] R_{j+1}^\al,
\end{align}
where in the last line we have used $R_{j}= 4^{\al}R_{j+1} \leq R_0$.
On account of $S_\el(\al)\to 0$ as $\al\to 0^+$, we can choose $\al<\min \big\{ \frac{ps_1}{p-1}, \frac{qs_2}{q-1}, \frac{ps_1-qs_2}{p-q},\frac{\ga s_1p-N}{\ga(p-1)} \big\}$ small enough such that 
\[ 4^\al\left(1-\varsigma+ c  S_p(\al)^\frac{1}{p-1}+ c S_q(\al)^\frac{1}{q-1} \right)\le 1-\frac{\varsigma}{4}\] 
and set
\begin{equation}\label{eqmu1}
	\omega=\frac{4^{\al+1}\; c}{\varsigma\;R_0^\al}\Big( Q(u;x_0,R_0)+R_0^\frac{ps_1-qs_2}{p-q}+ \| f \|_{L^\ga(B_{R_0})}^\frac{1}{p-1} R_0^\frac{\ga s_1p-N}{\ga(p-1)} \Big).
\end{equation} 
Note that with the above choice of $\omega$, \eqref{eqmu} is satisfied if the constant $c$, appearing in \eqref{eq32}, is replaced by a bigger constant such that $4c/\varsigma\geq 2$.  Thus, from \eqref{eq32}, we have
\begin{align*}
	\underset{B_{j+1}}{\rm osc}\; u\le \omega R_{j+1}^\al.
\end{align*}
Therefore, we pick $m_{j+1},\; M_{j+1}$ such that 
\begin{align*}
	m_j\le m_{j+1}\le\inf_{B_j}\; u\le\sup_{B_j}\; u\le M_{j+1}\le M_j \quad \mbox{and } M_{j+1}-m_{j+1}= \omega R_{j+1}^\al.
\end{align*}
To finish the proof of the theorem, we fix $r\in(0,R_0)$. Let $j\in\mb N\cup\{0\}$ be such that $R_{j+1}< r\le R_j$, then taking into account \eqref{eqmu1} and $R_j\le 4r$, we have 
\begin{align*}
	\underset{B_r}{\rm osc}\; u\le\underset{B_j}{\rm osc} \;u\le\omega R_j^\al \le C\Big(Q(u;x_0,R_0)+R_0^\frac{ps_1-qs_2}{p-q}+ \| f \|_{L^\ga(B_{R_0})}^\frac{1}{p-1} R_0^\frac{\ga s_1p-N}{\ga(p-1)} \Big)\frac{r^\al}{R_0^\al}.
\end{align*}
For the case $\frac{N(p-1)}{N-s_1p}\leq 1$ (this forces $p<2$), using the induction hypothesis, we observe that  
\begin{align*}
	M_j-u \leq (M_j-m_j)^{2-p} (M_j-u)^{p-1}, \quad\mbox{in }B_j, 
\end{align*}
and this still holds for $(u-m_j)$. Hence,
\begin{align*}
	M_j-m_j&\leq (M_j-m_j)^{2-p} \Big[
	\Xint-_{\frac{1}{2}B_{j+1}}(M_j-u(x))^{p-1} dx+ \Xint-_{\frac{1}{2}B_{j+1}}(u(x)-m_j)^{p-1}dx \Big].
\end{align*}
This implies that
\begin{align*}
	M_j-m_j&\leq  \Big[
	\Xint-_{\frac{1}{2}B_{j+1}}(M_j-u(x))^{p-1}dx+ \Xint-_{\frac{1}{2}B_{j+1}}(u(x)-m_j)^{p-1}dx \Big]^{1/(p-1)}\\
	&\leq 2^\frac{2-p}{p-1} \left[
	\Big(\Xint-_{\frac{1}{2}B_{j+1}}(M_j-u)^{p-1}dx\Big)^{1/(p-1)}+ \Big(\Xint-_{\frac{1}{2}B_{j+1}}(u-m_j)^{p-1}dx\Big)^{1/(p-1)} \right],
\end{align*}
which is similar to \eqref{eq34}, with $t=p-1$. Multiplying both the sides of the above expression by $\bar{\varsigma}:=\varsigma/2^\frac{2-p}{p-1}$, we can employ Corollary \ref{corweakharn}. Then, the rest of the proof follows similarly as before with $\bar{\varsigma}$ in place of $\varsigma$. 
This completes the proof of the theorem.\QED

\textbf{Proof of Corollary \ref{bdryreg}}: The proof follows by using the boundary behavior of the solution, given by \cite[Proposition 3.11]{JDS2}, and the interior regularity result of Theorem \ref{intreg}. For details, see the proof of \cite[Theorem 1.1]{iann}. \QED

\section{Strong maximum principle}
The main goal of this section is to prove our strong maximum principle. We first recall the notion of viscosity solution (see e.g., \cite{korvenpaa2}). To this end, for $u:\mb R^N\to\mb R$ and $D\subset\Om$, set the following: 
\begin{align*}
	&N_u:=\{x\in\Om : \ \na u(x)=0\}, \quad d_u(x):=\mathrm{dist}(x,N_u) \quad\mbox{and}\\
	&C^2_\ba(D):=\biggl\{ u\in C^2(D): \ \sup_{x\in D} \Big( \frac{\min\{d_u(x),1\}^{\ba-1}}{|\na u(x)|}+\frac{|D^2u(x)|}{d_u(x)^{\ba-2}} \Big)<\infty \biggr\}.
\end{align*}
\begin{Definition}\label{defnvsc}
	A function $u:\mb R^N\to [-\infty,\infty]$ is said to be a viscosity  super-solution to $(-\De)_p^{s_1} u+ (-\De)_q^{s_2} u\geq 0$ in $\Om$, if the following hold:
	\begin{itemize}
		\item[(i)] $u$ is lower semi-continuous in $\Om$ such that $u <\infty$ a.e. in $\mb R^N$ and $u>-\infty$ everywhere in $\Om$;
		\item[(ii)] $u_-\in L_{s_1p}^{p-1}(\mb R^N)\cap L_{s_2q}^{q-1}(\mb R^N)$;
		\item[(iii)] if whenever $B_r(x_0)\subset\Om$ and $\phi\in C^2(B_r(x_0))$ are such that 
		\begin{align*}
			\phi(x_0)=u(x_0) \quad \mbox{and } u(x) \geq \phi(x) \ \ \mbox{in } B_r(x_0),
		\end{align*}
		and one of the following holds
		\begin{itemize}
			\item[(a)] $p>\frac{2}{2-s_1}$ and $q>\frac{2}{2-s_2}$ or $\na\phi(x_0)\neq 0$,
			\item[(b)] either  $p\leq\frac{2}{2-s_1}$ or $q\leq\frac{2}{2-s_2}$; $\na \phi(x_0)=0$ such that $x_0$ is an isolated critical point of $\phi$ and $\phi\in C^2_{\ba}(B_r(x_0))$, for some $\ba>\max\{\frac{s_1p}{p-1},\frac{s_2q}{q-1}\}$;
		\end{itemize}
		then $(-\De)_{p}^{s_1} \phi_r(x_0) +(-\De)_q^{s_2} \phi_r(x_0) \geq  0$, where 
		\begin{align}\label{eqA15}
			\phi_r(x)=\begin{cases}
				\phi(x) &\mbox{if }x\in B_r(x_0),\\
				u(x) &\mbox{if }x\in \mb R^N\setminus B_r(x_0).
			\end{cases}
		\end{align}
	\end{itemize}
\end{Definition}
We recall the following weak comparison principle (e.g., see \cite{DDS}).
\begin{Proposition}
 Let $\widetilde{W}^{s,p}(\Om):= \{u\in L^p_{\rm loc}(\mb R^N)\cap L^{p-1}_{sp}(\mb R^N): \exists\;U\Supset\Om \ \mbox{with }\ \|u\|_{W^{s,p}(U)} <\infty \}$.
 Assume that $u,v\in \widetilde{W}^{s_1,p}(\Om)$ are such that 
  {\small\begin{align*}
  	\sum_{(\el,s)}{\int_{\mb R^{N}}\int_{\mb R^N}} \frac{[u(x)-u(y)]^{\el-1}(\phi(x)-\phi(y))}{|x-y|^{N+s\el}}dxdy \leq \sum_{(\el,s)}{\int_{\mb R^{N}}\int_{\mb R^N}} \frac{[v(x)-v(y)]^{\el-1}(\phi(x)-\phi(y))}{|x-y|^{N+s\el}}dxdy
  \end{align*}}
 for all $\phi\in W^{s_1,p}_0(\Om)$ and $u\leq v$ in $\Om^c$. Then, $u\leq v$ in $\Om$.
\end{Proposition}
Now, we prove that continuous weak super-solutions are viscosity super-solutions.
\begin{Lemma}\label{lemA1}
	Let $u\in W^{s_1,p}_0(\Om)\cap C(\ov\Om)$ be a weak super-solution to $(-\De)_p^{s_1} u+ (-\De)_q^{s_2} u\geq 0$ in $\Om$. Then, $u$ is also a viscosity super-solution.
\end{Lemma} 
\begin{proof}
	From the assumption on $u$, it is clear that items (i) and (ii) of Definition \ref{defnvsc} are satisfied. To prove (iii), on the contrary, assume that there exist $x_0\in\Om$ and $\phi\in C^2(B_r(x_0))$ such that $\phi(x_0)= u(x_0)$, $u(x)\geq \phi(x)$ in $B_r(x_0)$, either (a) or (b) of Definition \ref{defnvsc}(iii) holds and 
	\[ (-\De)_{p}^{s_1} \phi_r(x_0) +(-\De)_q^{s_2} \phi_r(x_0) <0, \]
	for some $r>0$ and $\phi_r$ given by \eqref{eqA15}. By \cite[Lemma 3.8]{korvenpaa2}, we have  $(-\De)_{p}^{s_1} \phi_r(x)$ and $(-\De)_q^{s_2} \phi_r(x)$ are continuous at $x_0$. Therefore, there exist $\rho'\in (0,r)$ and $\eta>0$ such that 
	\begin{align*}
		(-\De)_{p}^{s_1} \phi_r(x) +(-\De)_q^{s_2} \phi_r(x) <-\eta \quad\mbox{for all }x\in B_{\rho'}(x_0).
	\end{align*}
 Following the proof of \cite[Lemma 3.9]{korvenpaa2}, there exist $\e>0$, $\rho\in (0,\rho'/2)$ and $b\in C_c^2(B_{\rho/2}(x_0))$ such that $b(x_0)=1$ with $0\leq b \leq 1$, and $\psi_\e(x)=\phi_r(x)+\e b(x)$ satisfies 
		\begin{align*}
			\sup_{B_\rho(x_0)}|(-\De)_p^{s_1} \psi_\e(x)-(-\De)_p^{s_1} \phi_r(x)| <\frac{\eta}{2} \quad\mbox{and } \sup_{B_\rho(x_0)}|(-\De)_q^{s_2} \psi_\e(x)-(-\De)_q^{s_2} \phi_r(x)|<\frac{\eta}{2}.
	\end{align*}
	Consequently, we obtain 
	\[ (-\De)_{p}^{s_1} \psi_\e(x)+(-\De)_q^{s_2} \psi_\e(x) \leq 0 \quad\mbox{for all }x\in B_\rho(x_0). \]
	Let $v\in W^{s_1,p}_0(B_\rho(x_0))$ be a non-negative function, then multiplying the above equation with it and upon integration, we get
	{\small\begin{align*}
		\sum_{(\el,s)}{\int_{\mb R^{N}}\int_{\mb R^N}} \frac{[\psi_\e(x)-\psi_\e(y)]^{\el-1}(v(x)-v(y))}{|x-y|^{N+s\el}}d\mu \leq 0 \leq
		\sum_{(\el,s)}{\int_{\mb R^{N}}\int_{\mb R^N}} \frac{[u(x)-u(y)]^{\el-1}(v(x)-v(y))}{|x-y|^{N+s\el}}d\mu,
	\end{align*}}
 where $d\mu=dxdy$. Further, $\psi_\e\leq u$ in $B_\rho(x_0)^c$. Therefore, by the weak comparison principle, we obtain $\psi_\e \leq u$ in $B_\rho(x_0)$. But this contradicts $\psi_\e(x_0)=\phi(x_0)+\e b(x_0)=u(x_0)+\e>u(x_0)$. Thus, $u$ is a viscosity super-solution.  This completes the proof of the lemma.\QED
\end{proof}

\begin{Lemma}
 Suppose $1<q\leq p<\infty$. Let $u\in W^{s_1,p}_0(\Om)\cap C(\ov\Om)$ be a weak super-solution to $(-\De)_p^{s_1} u+ (-\De)_q^{s_2} u\geq 0$ in $\Om$ and $u\geq 0$ a.e. in $\mb R^N\setminus\Om$. Then, either $u\equiv 0$ a.e. in $\mb R^N$ or $u>0$ in $\Om$.
\end{Lemma}
\begin{proof} 
We will show that if $u\not\equiv 0$, then $u>0$ in $\Om$. By the weak comparison principle, we have $u\geq 0$ a.e. in $\mb R^N$. \\
\textit{Case I}: If $\Om$ is connected.\\
We proceed similarly to \cite[Theorem A.1]{brasco3}.
 Let $K\Subset\Om$ be any connected compact such that $u\not\equiv 0$ in $K$. Then, we will show that $u>0$ a.e. in $K$.
Since $K$ is compact, $K\subset \{x\in\Om: \ {\rm dist}(x,\pa\Om)>2r\}$, for some $r>0$. Moreover, there exists a finite covering $\{B_{r/2}(x_i)\}_{\{i=1,2\dots,n\}}$ for $K$ such that 
\begin{align}\label{eqA16}
	|B_{r/2}(x_i)\cap B_{r/2}(x_{i+1})|>0 \quad\mbox{for all } i=1,2,\dots,n-1.
\end{align}
Suppose $u\equiv 0$ on some subset of $K$ with positive measure. Then, for some $i\in\{1,\dots, n-1\}$, 
\begin{align*}
	|E:=\{x\in B_{r/2}(x_i)  : \ u(x)=0 \}|>0.
\end{align*}
For $\la>0$, set 
\begin{align*}
	U_\la(x):=\log\Big(1+\frac{u(x)}{\la}\Big), \quad \mbox{for all } x\in B_{r/2}(x_i).
\end{align*}
By observing that $U_\la\equiv 0$ on $E$, for $x\in B_{r/2}(x_i)$ and $y\in E$, we have 
\begin{align*}
	|U_\la(x)|^q =\frac{|U_\la(x)-U_\la(y)|^q}{|x-y|^{N+qs_2}}|x-y|^{N+qs_2},
\end{align*}
which upon integration with respect to $y\in E$ and $x\in B_{r/2}(x_i)$ yields
\begin{align}\label{eqA17}
	|E|\int_{B_{r/2}(x_i)} |U_\la(x)|^q dx \leq c r^{N+qs_2} \int_{B_{r/2}(x_i)}\int_{B_{r/2}(x_i)} \frac{|U_\la(x)-U_\la(y)|^q}{|x-y|^{N+qs_2}} dxdy.
\end{align}
On account of 
\begin{align*}
	\bigg|\log\frac{u(x)+\la}{u(y)+\la}\bigg|^q = |U_\la(x)-U_\la(y)|^q
\end{align*}
and \eqref{eqA18}, we deduce from \eqref{eqA17} that 
\begin{align*}
	\int_{B_{r/2}(x_i)} \bigg| \log\Big(1+\frac{u(x)}{\la}\Big) \bigg|^q dx \leq c\frac{r^{N+s_2q}}{|E|} \big( r^{N-s_2q}+r^{N-s_1p}(\|u\|_{L^\infty(B_R)}+\la)^{p-q} \big),
\end{align*}
where we have used that $u\geq 0$ in $\mb R^N$ (consequently, $u_-=0$). Passing to the limit as $\la\to 0$ in the above expression, we get 
\begin{align*}
	u=0 \quad \mbox{a.e. in }B_{r/2}(x_i).
\end{align*}
By using the property \eqref{eqA16}, we can proceed similarly for balls $B_{r/2}(x_{i-1})$ and $B_{r/2}(x_{i+1})$ (note that $|E:=\{x\in B_{r/2}(x_{i+1})  : \ u(x)=0 \}|\geq |B_{r/2}(x_{i+1})\cap B_{r/2}(x_{i})|>0$) and so on for all $i\in\{1,\dots,n\}$, that is, $u=0$ a.e. on $K$. This is a contradiction to our assumption that $u\not\equiv 0$ in $K$. Thus $u>0$ a.e. in $K$.\\
Since $\Om$ is open and connected (path connected), there exists a sequence of compact connected sets $\Om_n\subset\Om$ such that 
	\begin{align*}
		\Om=\cup_{n\in\mb N}\Om_n, \quad \Om_n\subset\Om_{n+1} \quad\forall \ n\in\mb N\quad\mbox{and }u\not\equiv 0 \quad\mbox{in }\Om_n,\quad\mbox{for all }n\geq n_0.
	\end{align*}
	Then, proceeding as above, we get $u>0$ a.e. in $\Om_n$ for all $n\geq n_0$. Therefore, $u>0$ a.e. in $\Om$.  \\
\textit{Case II}: Let $\Om$ be any bounded domain.\\
By the weak comparison principle we have $u\geq 0$ a.e. in $\mb R^N$. Next, we will show that if $u\not\equiv 0$ in $\Om$, then $u\not\equiv 0$ in every connected component of $\Om$. Suppose there exists a connected component $E$ of $\Om$ such that $u\equiv 0$ in $E$. Let $\psi\in W^{s_1,p}_0(E)$ be a test function. Then, 
\begin{align*}
	0 \leq \sum_{(\el,s)} \int_{\mb R^N} \int_{\mb R^N}  \frac{[u(x)- u(y)]^{\el-1}}{|x-y|^{N+s\el}} \big(\psi(x)-\psi(y)\big)dxdy= -2 \sum_{(\el,s)}\int_{E} \int_{E^c}  \frac{\big( u(y)\big)^{\el-1} \psi(x)}{|x-y|^{N+s\el}}dxdy.
\end{align*}
Thus, $u=0$ in $E^c$, that is, $u=0$ a.e. in $\mb R^N$. This is a contradiction to the assumption $u\not\equiv 0$. \\
To complete the proof of the lemma, we will show that if there exists $x_0\in\Om$ such that $u(x_0)=0$, then $u(x)=0$ a.e. $x\in\mb R^N$. By the above discussion, we have either $u>0$ a.e. in $\Om$ or $u=0$ a.e. in $\mb R^N$. 
By Lemma \ref{lemA1}, we have that $u$ is a viscosity super-solution. Thus, proceeding as in \cite[Lemma 3.5]{delpezzo}, we get $u>0$ for all $x\in\Om$. \QED
\end{proof}

Proceeding similarly to the proof of \cite[Theorem 3.12]{JDS2}, we have the following result.
\begin{Proposition}\label{sub}
	Let $1<q\leq p<\infty$. Then, for every $\vartheta>0$, there exists a unique solution $w_{\vartheta}\in W^{s_1,p}_0(\Om)\cap C^{0,\al}(\ov\Om)$, for some $\al\in (0,s_1)$, of the following problem:
	\begin{equation*}
		\left\{\begin{array}{rllll}	  
			(-\Delta)^{s_1}_{p}u+  (-\Delta)^{s_2}_{q}u &= \vartheta, \; \; u>0 \quad \mbox{in } \Om, \\
			u &=0 \quad \mbox{in }  \mb R^N\setminus \Om.
		\end{array}
		\right.\tag{$Q_\vartheta$}\label{probsub}
	\end{equation*}
	Moreover, $w_\vartheta\to 0$ in $C^{0,\sg}(\ov\Om)$, as $\vartheta\to 0$, for all $\sg<\al$.
\end{Proposition}
\textbf{Proof of Theorem \ref{strngmax}}:
	Without loss of generality we may assume that $g$ is non-decreasing and $g(0)=0$ (by Jordan's decomposition). Since $u\not\equiv 0$, there exist $x_0\in\Om$, $\rho,\e>0$ and $\vth_0\in (0,1)$ such that
	\begin{align}\label{eqA11}
		\sup_{\ov{B_\rho(x_0)}} w_{\vth_0} \le \inf_{\ov{B_\rho(x_0)}} u -\e,
	\end{align}
	where $w_{\vth_0}$ is the solution to problem $(Q_{\vth_0})$. Indeed, there exists $x_0\in \Om$ such that $u(x_0)>0$. Then by continuity of $u$ and Proposition \ref{sub} (in particular, $w_\vth\to 0$ in $C_0(\ov\Om)$) as $\vth\to 0$, for $\e<u(x_0)/4$, there exist $\rho>0$ and $\vth_0\in (0,1)$  such that 
	\begin{align*}
		\|w_{\vth_0}\|_{C(\ov\Om)} \leq u(x_0)/2\leq u(x_0)-2\e < u(x)-\e \quad\mbox{for all }x\in B_\rho(x_0).
	\end{align*}
	For all $\vth\in (0,\vth_0]$, set the following:
	\begin{align*}
		v_\vth:=\begin{cases}
			w_\vth &\mbox{in }\mb R^N\setminus\ov{B_{\rho/2}(x_0)}, \\
			u &\mbox{in }\ov{B_{\rho/2}(x_0)},
		\end{cases}
	\end{align*}
	where $w_{\vth}$ is the solution to problem \eqref{probsub}. Since $w_\vth\leq w_{\vth_0}$, on account of \eqref{eqA11}, we have $v_\vth\leq u$ in $\ov{B_{\rho}(x_0)}$ and $v_\vth\in \widetilde{W}^{s_1,p}(\Om\setminus\ov{B_{\rho}(x_0)})$. By the nonlocal superposition principle \cite[Proposition 2.6]{iann} and proceeding as in \cite[Theorem 2.6]{iannmospap}, we have weakly in $\Om\setminus\ov{B_{\rho}(x_0)}$
	\begin{align*}
		(-\De)_p^{s_1}v_\vth  \leq (-\De)_p^{s_1}w_\vth - C_\rho \e^{p-1}  \quad\mbox{and }(-\De)_q^{s_2}v_\vth \leq  (-\De)_q^{s_2}w_\vth - C'_\rho \e^{q-1}.
	\end{align*}
	Choosing $\vth\in (0,\vth_0]$ small enough, for a positive constant $C$ (independent of $\e$), we obtain
	\begin{align*}
		(-\De)_p^{s_1}v_\vth + (-\De)_q^{s_2}v_\vth \leq \vth-c_\rho \e^{p-1} \leq -C\e^{p-1}.
	\end{align*}
	Moreover, on account of the fact that $g(v_\vth)\to $ uniformly in $\Om\setminus\ov{B_{\rho}(x_0)}$, as $\vth\to 0$, for even smaller $\vth$ (if necessary), we get 
	\begin{align*}
		(-\De)_p^{s_1}v_\vth + (-\De)_q^{s_2}v_\vth +g(v_\vth)\leq 0 \leq (-\De)_p^{s_1}u + (-\De)_q^{s_2}u +g(u) \quad\mbox{ weakly in } \Om\setminus\ov{B_{\rho}(x_0)}
	\end{align*}
	and $v_\vth\leq u$ in $\mb R^N\setminus(\Om\setminus\ov{B_{\rho}(x_0)})$. Thus, by the weak comparison, we obtain $v_\vth\leq u$ in $\Om\setminus\ov{B_{\rho}(x_0)}$. Consequently, using \eqref{eqA11}, we have
	\begin{align*}
		u \geq w_\vth \quad\mbox{in }\Om.
	\end{align*} 
	Since $0<w_\vth\in C(\ov\Om)$, it is evident from the proof of \cite[Proposition 2.6]{JDS2} that $w_\vth\geq c_1 d^{s_1}$, for some positive constant $c_1$. Hence, the result of the theorem follows. \QED


\begin{thebibliography}{9}
{\small 
\bibitem{alves} C.O. Alves, V. Ambrosio and T. Isernia,  {\it Existence, multiplicity and concentration for a class of fractional $p\&q$ Laplacian problems in $\mathbb{R}^N$,} Commun. Pure Appl. Anal. 18 (2019), no. 4, 2009-2045.

\bibitem{ambrosio1}  V. Ambrosio,  {\it Fractional $p\&q$ Laplacian problems in $\mathbb{R}^N$ with critical growth,}  Z. Anal. Anwend. 39 (2020), no. 3, 289-314.

\bibitem{ambrosio}  V. Ambrosio and V. D. R\u adulescu,  {\it Fractional double-phase patterns: concentration and multiplicity of solutions,}  J. Math. Pures Appl.,  (9) 142 (2020), 101-145.

\bibitem{ambrosioT1} V. Ambrosio and T. Isernia, {\it On a fractional $p\& q$ Laplacian problem with critical Sobolev-Hardy exponents,} Mediterr. J. Math. 15 (2018), no. 6, Paper No. 219, 17 pp.

\bibitem{ambrosioT2} V. Ambrosio and T. Isernia, {\it Multiplicity of positive solutions for a fractional $p\& q$-Laplacian problem in $\mathbb{R}^N$,} J. Math. Anal. Appl. 501 (2021), no. 1, Paper No. 124487, 31 pp.

\bibitem{bhakta} M. Bhakta and D. Mukherjee, {\it Multiplicity results for $(p,q)$ fractional elliptic equations involving critical nonlinearities,} Adv. Differential Equations, 3/4(4) (2019), 185-228.

\bibitem{BDVV} S. Biagi, S. Dipierro, E. Valdinoci, E. Vecchi,  {\it Semilinear elliptic equations involving mixed local and nonlocal operators,} Proceedings of the Royal Society of Edinburgh: Section A Mathematics, (2021), 1-31. doi:10.1017/prm.2020.75

\bibitem{brasco3} L. Brasco and G. Franzina, {\it Convexity properties of Dirichlet integrals and Picone-type inequalities,} Kodai Math. J. 37 (2014), 769-799.

\bibitem{brascoH} L. Brasco, E. Lindgren and A. Schikorra, {\it Higher H\"older regularity for the fractional $p$-Laplacian in the superquadratic case,} Adv. Math., 338 (2018), 782-846.

\bibitem{brasco2} L. Brasco and E. Parini, {\it The second eigenvalue of the fractional p-Laplacian,} Adv. Calc. Var. 9 (2016), 323-355.
 
 \bibitem{caffsilv} L. Caffarelli and L. Silvestre, {\it Regularity results for nonlocal equations by approximation,} Arch. Ration. Mech. Anal. 200 (2011), 59-88.
 	
  \bibitem{CKPV} Z.-Q. Chen, P. Kim, R. Panki, Z. Vondra\v{c}ek, {\it Sharp Green function
 	estimates for $\De+\De^{\al/2}$ in $C^{1,1}$ open sets and their applications,} Illinois Journal of Mathematics 54  (2010), 981-1024.
 
 \bibitem{cozzi} M. Cozzi, {\it Regularity results and Harnack inequalities for minimizers and solutions of nonlocal problems: a unified approach via fractional De Giorgi classes}, J. Funct. Anal. 272 (2017) 4762-4837.
 
 \bibitem{filippis}C. De Filippis and G. Palatucci. {\it H\" older regularity for nonlocal double phase equations}, J. Differential Equations 267 (2019), 547–586.
 
 \bibitem{delpezzo} L. M. Del Pezzo and A. Quaas, {\it A Hopf’s lemma and a strong minimum principle for the fractional $p$-Laplacian,} J. Differential Equations 263 (2017), 765-778.
 
\bibitem{dicastro} A. Di Castro, T. Kuusi and G. Palatucci, {\it Local behavior of fractional $p$-minimizers,} Ann. Inst. Henri Poincar\'e, Anal. Non Lin\'eaire 33 (2016), 1279-1299.
	
\bibitem{dicastroHarn} A. Di Castro, T. Kuusi and G. Palatucci, {\it Nonlocal Harnack inequalities,} J. Funct. Anal. 267 (2014), no. 6, 1807-1836.

 \bibitem{nezzaH}E. Di Nezza, G. Palatucci and E. Valdinoci, {\it Hitchhiker’s guide to the fractional Sobolev spaces,} Bull. Sci. Math., 136 (2012), 521-573.
 
 \bibitem{dyda} B. Dyda and M. Kassmann, {\it Regularity estimates for elliptic nonlocal operators}, Anal. PDE, 13 (2020) no. 2, 317-370.
 
 \bibitem{bonder}  J. Fern\'andez Bonder, A. Salort and H. Vivas, {\it Interior and up to the boundary regularity for the fractional $g$-Laplacian: the convex case,} arxiv:2008.05543.
  
\bibitem{JDS2} J. Giacomoni, D. Kumar and K. Sreenadh, {\it Interior and boundary regularity results for strongly nonhomogeneous $p,q$-fractional problems,} to appear in Adv. Calc. Var., arXiv: 2102.06080.

\bibitem{JDS} J. Giacomoni, D. Kumar and K. Sreenadh, {\it Sobolev and H\"older regularity results for some singular nonhomogeneous quasilinear problems,} Calc. Var. Partial Differential Equations, 60(3) (2021), no. 121.

\bibitem{gilbarg} D. Gilbarg and N. S. Trudinger, {\it Elliptic Partial Differential Equations of Second Order}, Springer-Verlag, New-York, 1983.

\bibitem{DDS} D. Goel, D. Kumar and K. Sreenadh, {\it Regularity and multiplicity results for fractional $(p,q)$-Laplacian equation,} Commun. Contemp. Math. 22 (2020), no. 8, 37 pp.

\bibitem{iannmospap}A. Iannizzotto, S. Mosconi and N. Papageorgiou, {\it On the logistic equation for the fractional p-Laplacian}, arxiv:2101.05535

\bibitem{iann}A. Iannizzotto, S. Mosconi and M. Squassina, {\it Global H\"older regularity for the fractional $p$-Laplacian,} Rev. Mat. Iberoam. 32 (2016), 1353-1392.

\bibitem{ianndist} A. Iannizzotto, S. Mosconi and M. Squassina, {\it Fine boundary regularity for the fractional $p$-Laplacian,} J. Functional Analysis 279 (2020), no. 8, 108659.

 \bibitem{Kassmann}M. Kassmann, {\it Harnack inequalities and Hölder regularity estimates for nonlocal operators revisited,} \url{https://sfb701.math.uni-bielefeld.de/files/preprints/sfb11015.pdf}.
 
 \bibitem{Kassmann2} M. Kassmann, {\it The classical Harnack inequality fails for nonlocal operators}, preprint no. 360, Collaborative
 Research Center 611, University of Bonn, 2007, available at \url{https://tinyurl.com/Kassman2007Harnackfails}.
 
 \bibitem{korvenpaa2} J. Korvenpää, T. Kuusi and E. Lindgren, {\it Equivalence of solutions to fractional $p$-Laplace type equations},
 J. Math. Pures Appl. 132 (2019), 1-26.
 
\bibitem{korvenpaa} J. Korvenpää, T. Kuusi and G. Palatucci, {\it The obstacle problem for nonlinear integro-differential operators},
Calc. Var. Partial Differential Equations 55 (2016), Art. 63.
 
  \bibitem{kuusi} T. Kuusi, G. Mingione and Y. Sire, {\it Nonlocal equations with measure data,} Comm. Math. Phys. 337 (2015), 1317-1368.
 
 \bibitem{lieberm} G. M. Lieberman, {\it Boundary regularity for solutions of degenerate elliptic equations,} Nonlinear Anal. 12 (1988), 1203-1219.
 
 \bibitem{liebm91}	G. M. Lieberman, {\it The natural generalization of the natural conditions of Ladyzhenskaya and Ural’tseva for elliptic equations,} Comm. Partial Differential Equations 16 (1991), no. 2-3, 311-361.

 \bibitem{maly} J. Ma\'ly and W. P. Ziemer, {\it Fine regularity of solutions of elliptic partial differential equations,} volume 51 of Mathematical Surveys and Monographs. Amer. Math. Soc., Providence, RI, 1997.
 
  \bibitem{marano} S. Marano and S. Mosconi, {\it Some recent results on the Dirichlet problem for $(p,q)$-Laplacian equation,} Discrete Contin. Dyn. Syst. Ser. S 11 (2018), 279-291.
  
 \bibitem{mingioneJMAA} G. Mingione and V. R\v{a}dulescu, {\it Recent developments in problems with nonstandard growth and nonuniform ellipticity}, J. Math. Anal. Appl. 501 (2021), no. 1, 125197.
 
 \bibitem{ros} X. Ros-Oton and J. Serra, {\it The Dirichlet problem for the fractional Laplacian: regularity up to the boundary,} J. Math. Pures Appl., 101 (2014), 275-302.
 
\bibitem{trudinger} N. S. Trudinger, {\it On Harnack type inequalities and their applications to quasilinear elliptic equations,} Comm. Pure Appl. Math. 20 (1967), 721-747.}
\end{thebibliography}
\end{document}